\documentclass[reqno]{amsart}
\usepackage{amsmath}

\usepackage{amsfonts}
\usepackage{amssymb}
\usepackage{hyperref}
\usepackage{xcolor}
\usepackage{graphicx}
\usepackage{mathptmx}      
\usepackage{latexsym}
\usepackage{cite}
\usepackage{epstopdf}
\usepackage[a4paper,top=3.5cm,bottom=3cm,left=3cm,right=3cm]{geometry}

\setcounter{MaxMatrixCols}{10}

\begin{document}
\title[Existence and stability results for $\psi$-Hilfer fractional differential equations]{Existence and stability results for a sequential
$\psi$-Hilfer fractional integro-differential equations with nonlocal boundary conditions}

\author[F. Haddouchi]{Faouzi Haddouchi}

\address{
Department of Physics, University of Sciences and Technology of
Oran-MB, Oran, Algeria
\newline
And
\newline
Laboratory of Fundamental and Applied Mathematics of Oran (LMFAO), University of Oran 1, Oran, Algeria}
\email{fhaddouchi@gmail.com}

\author[M.E.Samei]{Mohammad Esmael Samei}

\address{Department of Mathematics, Faculty of Basic Science, Bu-Ali Sina University, Hamedan, Iran}
\email{mesamei@basu.ac.ir; mesamei@gmail.com}

\author[Sh. Rezapour]{Shahram Rezapour}

\address{
Department of Mathematics, Azarbaijan Shahid Madani University, Tabriz, Iran
\newline
And
\newline
Department of Medical Research, China Medical University Hospital, China Medical University, Taichung, Taiwan}
\email{rezapourshahram@yahoo.ca; sh.rezapour@azaruniv.edu}

\subjclass[2010]{34A08, 34A12, 47H10, 34B15,  26A33}
\keywords{$\psi$-Hilfer fractional derivative, nonlocal conditions, existence and uniqueness, fixed point theorems, Kuratowski measure of noncompactness, boundary value problem, Ulam–
Hyers stability}

\begin{abstract}
This paper deals with the existence and uniqueness of solutions for a nonlinear boundary value problem involving a sequential $\psi$-Hilfer fractional integro-differential equations with nonlocal boundary conditions. The existence and uniqueness of solutions are established for the considered problem by using the Banach contraction principle, Sadovski's fixed point theorem, and Krasnoselskii-Schaefer fixed point theorem due to Burton and Kirk. In addition, the Ulam-Hyers stability of solutions is discussed. Finally, the obtained results are illustrated by examples.
\end{abstract}

\maketitle \numberwithin{equation}{section}
\newtheorem{theorem}{Theorem}[section]
\newtheorem{lemma}[theorem]{Lemma}
\newtheorem{definition}[theorem]{Definition}
\newtheorem{proposition}[theorem]{Proposition}
\newtheorem{corollary}[theorem]{Corollary}
\newtheorem{remark}[theorem]{Remark}
\newtheorem{exmp}{Example}[section]

\section{Introduction\label{sec:1}}

The study of differential equations with fractional order has become very useful over the last few decades due to its extensive applications in various areas such as mathematics, physics, biology, networks, image processing, control theory, viscoelasticity, quantum physics, and so forth, see \cite{Lazopoulos1,Lazopoulos2,Hilf,Hilf3,Kilbas,Pod,Sam,Stempin,Tarasov}.
Recently, fractional and integral differential operators have been proposed to describe new physical phenomena \cite{Rosa,Silva,Sousa2,Hermann, Atanack,Sousa3}. There are several approaches to fractional derivatives and integral operators such as Caputo, Caputo-Fabrizio, Riemann-Liouville, Weyl, Atangana-Baleanu, Katugampola, Hadamard, etc. For some recent works on this class of fractional differential equations, we refer the reader to the articles \cite{Almeida,Abd,Atang1,Atang2,Atang3,Atang4,Atang5,Hilf2,Hilf4,Hadd1,Hadd2,Hadd3,Hadd4,Hadd5,Ismail,Kumar,Katug,Kiata}

In \cite{Sousa3}, Sousa and Oliveira have recently proposed a fractional differentiation operator, called $\psi$-Hilfer operator that has the special property of unifying  and generalizing several different fractional operators.

By using fixed point theory, the existence and stability results of solutions involving fractional differential equations with a variety of boundary conditions have been studied in recent years, see \cite{Vivek,Nieto,Yang,Asawa,Furati,Saen,Sousa4,Harik,Reza,Abdo2,Sousa5} and references cited therein.

Thaiprayoon et.al., in \cite{thai} studied the existence and stability of solutions
of a boundary value problem (BVP) of the fractional thermostat control model with
$\psi$-Hilfer fractional operator of the form

\begin{equation}\label{eq01}
       \begin{cases} ^{H}D^{\alpha,\rho;\psi}_{0^{+}}x(\varsigma)= f(\varsigma,x(\theta \varsigma), I_{0^{+}}^{q,\psi}x(\varepsilon \varsigma)) \   \in (0,T],\\
   \sum_{i=1}^{m}\omega_{i}^{H}D^{\beta_{i},\rho;\psi}_{0^{+}}x(\xi_{i})=A,\ \sum_{j=1}^{n}\gamma_{j}^{H}D^{\mu_{j},\rho;\psi}_{0^{+}}x(\sigma_{j})+ \sum_{k=1}^{r}\delta_{k}x(\eta_{k})=B,
       \end{cases}
       \end{equation}

where $^{H}D^{\nu,\rho;\psi}_{0^{+}}$ denotes the $\psi$-Hilfer derivative operators of order $\nu = {\beta,\beta_{i},\mu_{j}}$, $\alpha\in (1,2]$, $\beta_{i},\mu_{j}\in (0,1]$, $A,B,\omega_{i},\lambda_{j},\delta_{k}\in \mathbb{R}$, $\xi_{i}, \sigma_{j},\eta_{k}\in (0,T)$, $i=1,2,...,m$, $j=1,2,...,n$, $k=1,2,...,r$, $\rho \in[0,1]$, $I_{0^{+}}^{q,\psi}$ is the $\psi$-RL-integral of order $q > 0$, $\theta,\varepsilon \in(0, 1]$, $f \in C(J \times \mathbb{R}^{2},\mathbb{R})$, and $J :=[0,T]$ with $T > 0$. They established existence results by employing some fixed point theorems. Also, various types of stability in the format of Ulam for this problem are discussed. \\

In \cite{Kot}, the authors established sufficient conditions to approve the existence and uniqueness of
solutions of a nonlinear implicit $\psi$-Hilfer fractional boundary value problem of the cantilever beam
model with nonlinear boundary conditions

\begin{equation}\label{eq02}
       \begin{cases} ^{H}D^{\alpha,\rho;\psi}_{a^{+}}x(t)= f(t,x(t),^{H}D^{\alpha,\rho;\psi}_{a^{+}}x(t), I_{a^{+}}^{\beta,\psi}x(t)), \ \  t\in (a,b),\\
   x(a)=0,\ ^{H}D^{\delta,\rho;\psi}_{a^{+}}x(a)=0,\\ \sum_{i=1}^{n}\mu_{i} ^{H}D^{\theta_{i},\rho;\psi}_{a^{+}}x(\kappa_{i})=H(\eta,x(\eta)),\
   \sum_{j=1}^{n} \varphi_{j}^{H}D^{\phi_{j},\rho;\psi}_{a^{+}}x(\varsigma_{j})=G(\xi,x(\xi)),
       \end{cases}
       \end{equation}
where  $^{H}D^{\nu,\rho;\psi}_{a^{+}}$ denotes the $\psi$-Hilfer fractional derivative operators of order $\nu = \{\alpha,\delta,\theta_{i},\phi_{j}\}$, $\alpha\in(3,4]$, $\theta_{i}\in(0,1]$, $\delta\in(1,2]$, $\phi_{j}\in(2,3]$, $\kappa_{i},\varsigma_{j},\eta,\xi \in(a,b]$, $\mu_{i},\varphi_{j}\in \mathbb{R}$, for $i=1,2,...,m$, $j=1,2,...,n$, and $\rho\in [0,1]$. $I_{a^{+}}^{\beta,\psi}$ denotes $\psi$-Riemann–Liouville fractional integral of order $\beta>0$, $f\in C(J\times\mathbb{R}^{3},\mathbb{R})$, $H,G\in C(J,\mathbb{R})$ and $J:=[a,b]$, $b>a>0$.

By using Banach's fixed point theorem, the uniqueness result is proved. Also, they obtained the existence result is obtained by applying the fixed point theorem of Schaefer. Furthermore, the different types of Ulam's stability are used to investigate the stability of the solution of the proposed problem.\\

In \cite{Ntouy}, Ntouyas and Vivek studied the existence and uniqueness of solutions for a new class of boundary value problems of sequential $\psi$-Hilfer-type fractional differential equations with
multi-point boundary conditions of the form

\begin{equation}\label{eq03}
       \begin{cases} \big(\ ^{H}D^{\alpha,\beta;\psi}_{a^{+}}+k\ ^{H}D^{\alpha-1,\beta,\psi}_{a^{+}}\big)x(t)= f(t,x(t)), \ \  t\in [a,b],\\
   x(a)=0,\ x(b)=\sum_{i=1}^{m}\lambda_{i} x(\theta_{i}),
       \end{cases}
       \end{equation}
where $ ^{H}D^{\alpha,\beta;\psi}_{a^{+}}$ is the  $\psi$-Hilfer fractional derivative of order $\alpha$,
$1 < \alpha < 2$, and parameter $\beta$, $0\leq \beta \leq1$, $f:[a,b]\times\mathbb{R}\rightarrow \mathbb{R}$ is a continuous function, $a < b$, $k, \lambda_{i}\in \mathbb{R}$, $i = 1,2,...,m$, and $a < \theta_{1} < \theta_{2} < ... < \theta_{m} < b$.

Existence and uniqueness results are obtained by using the classical fixed point theorems of Banach, Krasnoselskii, and the nonlinear alternative of Leray-Schauder.\\

In \cite{Sousa}, the authors considered the following nonlinear fractional differential equations in the $\psi$-Hilfer sense
\begin{equation}\label{eq04}
       \begin{cases}  ^{H}D^{\alpha,\beta;\psi}_{a^{+}}y(t)= f(t,y(t),\ ^{H}D^{\alpha,\beta;\psi}_{a^{+}}y(t)), \ \  t\in J,\\
   I_{a^{+}}^{1-\gamma,\psi}y(a)=y_{a},
       \end{cases}
       \end{equation}

where $^{H}D^{\alpha,\beta;\psi}_{a^{+}}(.)$ is the $\psi$-Hilfer fractional derivative of order $0 < \alpha\leq1$ and type $0\leq\beta\leq 1$, $I_{a^{+}}^{1-\gamma,\psi}(.)$ is the Riemann–Liouville fractional integral of order $1-\gamma$, $\gamma=\alpha+\beta (1-\alpha)$, with respect to function $\psi$, $f:J\times \mathbb{R}\times \mathbb{R}\rightarrow \mathbb{R}$ is a given function space, $J = [a, T]$ with $T > a$ and $y_{a}\in\mathbb{R}$.\\
They studied the existence and uniqueness of solutions for the nonlinear Cauchy problem, \eqref{eq04}, by means of Banach's contraction principle. In addition, the Ulam–Hyers and Ulam–Hyers–Rassias stabilities
of solutions are discussed.\\

In \cite{Lima}  et.al., investigated sufficient conditions for the existence, uniqueness and Ulam–Hyers stability of the solution of a fractional delay impulsive differential equation
\begin{equation}\label{eq05}
\begin{cases}^{H}D^{\alpha,\beta;\psi}_{0^{+}}x(t)= F(t,x_{t}),\ t\in (0,T]\backslash \{t_{1},t_{2},...,t_{m}\},\\
\Delta x(t_{k})=x(t_{k}^{+})-x(t_{k}^{-})=I_{k}(x(t_{k}^{-})),\ k=1,2,...,m\\
I_{0^{+}}^{1-\gamma}x(0)=y_{0},\\
x(t)=h(t),\ \ \ \ t\in[-r,0],
\end{cases}
 \end{equation}

where $r > 0$, $T > 0$, $F : [0, T]\times \Omega\rightarrow \mathbb{R}$ a given function, $I_{k} : \mathbb{R}\rightarrow \mathbb{R}$ and $h$ is a continuous function defined on $[-r,0]$, $x(t_{k}^{+})=\lim_{\tau\rightarrow 0^{+}} x(t_{k}+\tau)$, $x(t_{k}^{-})=\lim_{\tau\rightarrow 0^{-}} x(t_{k}-\tau)$, $t_{k}$ satisfies $0=t_{0}<t_{1}<t_{2}<...<t_{m}<t_{m+1}=T<\infty$ and $x_{t}(s)=x(t+s)$, $s\in[-r,0]$. Also, $^{H}D^{\alpha,\beta;\psi}_{0^{+}}(.)$ is the $\psi$-Hilfer fractional derivative of order $\alpha \in(0, 1)$ and type $\beta \in [0, 1]$, while $I_{0^{+}}^{1-\gamma}(.)$ is the Riemann–Liouville fractional integral of order $1-\gamma$, where $\gamma=\alpha+\beta(1-\alpha)$.

The proof of their main results are based upon Banach fixed point principle and a Gronwall inequality involving the $\psi$-Riemann–Liouville fractional integral.\\

Inspired by the aforementioned works, in this paper, we are concerned with a new class of boundary value problems of sequential $\psi$-Hilfer-type fractional integro-differential equations with nonlocal boundary conditions of the form
 {\color{blue}\begin{equation}\label{eq09}
  \big( ^{H}D^{\nu,\beta;\psi}_{a^{+}}+\lambda ^{H}D^{\nu-1,\beta,\psi}_{a^{+}}\big)u(t)= f(t,u(t),(\mathcal{V}u)(t), I_{a^{+}}^{2-\mu,\psi}u(t)) ,\  t \in J,
  \end{equation}
  \begin{equation}\label{eq010}
  u(a)=0,\  I^{2-\mu,\psi}_{a^{+}}u(T) =\sum_{i=1}^{m}\alpha_{i}u(\eta_{i})+\sum_{i=1}^{m}\beta_{i}u^{\prime}(\eta_{i})+g(u(\xi)),
  \end{equation}}
 where $1<\nu\leq2$, $0\leq\beta<1$, $J=[a,T],\ 0\leq a<T<\infty$, $f \in\mathcal{C}( J\times \mathbb{R}^{3}, \mathbb{R})$ is a continuous function fulfilling some assumptions that will be described later, $a<\eta_{1}<\eta_{2}< \ldots<\eta_{m}<\xi< T$, $\lambda$, $\alpha_{i}, \beta_{i}\in \mathbb{R}\ (i=1,...,m)$ are given constants, $g:\mathbb{R} \rightarrow \mathbb{R}$ is continuous function with $g(a)=0$.
$^{H}D^{\nu,\beta,\psi}$ is the $\psi$-Hilfer fractional derivative of order $\nu$, and $I_{a^{+}}^{2-\mu,\psi}$ denotes the Riemann-Liouville fractional integral of order $2-\mu$ ($\mu=\nu+\beta(2-\nu)$). $\mathcal{V}:\mathcal{C}( J, \mathbb{R})\rightarrow \mathcal{C}( J, \mathbb{R})$ is an operator (not necessarily linear) satisfying some assumptions that will be specified later.
\\

This paper is organized as follows. In section 2, we present essential notions and lemmas related to the $\psi$-Hilfer fractional operator and some fixed pont theorems and lemmas that will be used to prove our main results. In section 3, we investigate the existence and uniqueness of solutions for \eqref{eq09}-\eqref{eq010} via Banach fixed point principle and fixed point theorems due to Sadovski, a Krasnoselskii-Schaefer fixed point theorem due to Burton and Kirk. Also, sufficient conditions for the Ulam-Hyers stability to the proposed problem is proposed in Sect.3. Finally, we provide some examples illustrating the proposed theoretical results.

 \section{Preliminaries}
In this section, we present some definitions, lemmas and concepts of $\psi$-Hilfer fractional calculus that will be useful in this paper.\\
Let $\mathcal{C}(J,\mathbb{R})$ be the Banach space of continuous functions on $J$ equipped with the norm $\|u\|=\sup_{t\in J}|u(t)|$.\\
Let $\mathcal{A}\mathcal{C}^{n}(J,\mathbb{R})$ denotes the space of $n$-times absolutely continuous functions.
\begin{definition}\label{def1.1}\cite{Kilbas}
Let $(a,b)$ be a finite or infinite interval of the half-axis $\mathbb{R}^{+}$. Also let  $\psi(t)$ be an increasing and positive monotone function on $(a, b]$, having a continuous derivative $\psi^{\prime}(t)$ on $(a, b)$. The $\psi$-Riemann-Liouville fractional integral of order $\alpha$ of a function $f$ depending on the function $\psi$ on $J$ is defined by
\begin{equation}\label{eqn01}
I_{a^{+}}^{\alpha,\psi}f(t)=\frac{1}{\Gamma(\alpha)}\int_{a}^{t}\psi^{\prime}(s)
\big(\psi(t)-\psi(s)\big)^{\alpha-1}f(s)ds,\ t>a>0,\ \alpha>0,
\end{equation}
where $\Gamma(.)$ is the (Euler) Gamma function.
\end{definition}

\begin{definition}\label{def1.2}\cite{Kilbas}
Let $\psi(t)$ be define as in Definition \ref{def1.1} with $\psi^{\prime}(t)\neq0$. The $\nu^{th}$-$\psi$-Riemann–Liouville fractional derivative of a
function $f$ depending on the function $\psi$ is defined as
\begin{eqnarray*}
D_{a^{+}}^{\nu,\psi}f(t)&=&\bigg(\frac{1}{\psi^{\prime}(t)}\frac{d}{dt} \bigg)^{n}I_{a^{+}}^{n-\nu,\psi}f(t)\\
&=&\frac{1}{\Gamma(n-\nu)}\bigg(\frac{1}{\psi^{\prime}(t)}\frac{d}{dt} \bigg)^{n}\int_{a}^{t}\psi^{\prime}(s)
\big(\psi(t)-\psi(s)\big)^{n-\nu-1}f(s)ds,\ \nu>0,
\end{eqnarray*}
where $n = [\nu] + 1$, $[\nu]$ represents the integer part of the real number $\nu$.
\end{definition}

\begin{definition}\label{def1.3}\cite{Vanterler}
Let $\gamma=\nu+\beta(n-\nu)$, $\alpha\in(n-1,n)$ with $n\in \mathbb{N}$, $f\in \mathcal{C}^{n}(J,\mathbb{R})$ and $\psi\in \mathcal{C}^{1}(J,\mathbb{R})$ be increasing with $\psi^{\prime}(t)\neq0$ for each $t\in J$. Then, the $\nu^{th}$-$\psi$-Hilfer fractional derivative of type $\beta\in[0, 1]$ of a function $f$ , depending on the function $\psi$, is defined as
$$^{H}D_{a^{+}}^{\nu,\beta;\psi}f(t)=I_{a^{+}}^{\beta(n-\nu),\psi}\bigg(\frac{1}{\psi^{\prime}(t)}\frac{d}{dt} \bigg)^{n}I_{a^{+}}^{(1-\beta)(n-\nu),\psi}f(t)=I_{a^{+}}^{(\gamma-\nu),\psi}D_{a^{+}}^{\gamma,\psi}f(t)$$
\end{definition}

\begin{lemma} \label{lem1.1}\cite{Kilbas}
Let $\alpha,\beta> 0$. Then, we have the following semigroup property given by
\[I_{a^{+}}^{\alpha,\psi}I_{a^{+}}^{\beta,\psi}f(t)=I_{a^{+}}^{\alpha+\beta,\psi}f(t),\ t>a.\]
\end{lemma}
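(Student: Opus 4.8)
The plan is to reduce the composition of the two iterated integrals to a single integral by Fubini's theorem, and then to evaluate the resulting inner integral by a Beta-function substitution. First I would write both operators out explicitly via \eqref{eqn01}, obtaining
\[
I_{a^{+}}^{\alpha,\psi}I_{a^{+}}^{\beta,\psi}f(t)=\frac{1}{\Gamma(\alpha)\Gamma(\beta)}\int_{a}^{t}\psi'(s)\big(\psi(t)-\psi(s)\big)^{\alpha-1}\left(\int_{a}^{s}\psi'(r)\big(\psi(s)-\psi(r)\big)^{\beta-1}f(r)\,dr\right)ds.
\]
Since $f$ is continuous on the compact interval $[a,t]$ and $\psi'$ is continuous, the double integrand is absolutely integrable over the triangle $\{(s,r):a\le r\le s\le t\}$, so Fubini's theorem applies and permits interchanging the order of integration, after which $\psi'(r)f(r)$ can be pulled outside the inner integral:
\[
I_{a^{+}}^{\alpha,\psi}I_{a^{+}}^{\beta,\psi}f(t)=\frac{1}{\Gamma(\alpha)\Gamma(\beta)}\int_{a}^{t}\psi'(r)f(r)\left(\int_{r}^{t}\psi'(s)\big(\psi(t)-\psi(s)\big)^{\alpha-1}\big(\psi(s)-\psi(r)\big)^{\beta-1}\,ds\right)dr.
\]

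Next I would evaluate the inner $s$-integral. Because $\psi$ is increasing with $\psi'\neq 0$, the substitution $w=\dfrac{\psi(s)-\psi(r)}{\psi(t)-\psi(r)}$ is a legitimate change of variable on $[r,t]$, with $dw=\dfrac{\psi'(s)}{\psi(t)-\psi(r)}\,ds$, $\psi(s)-\psi(r)=w\,(\psi(t)-\psi(r))$ and $\psi(t)-\psi(s)=(1-w)\,(\psi(t)-\psi(r))$, sending $s=r$ to $w=0$ and $s=t$ to $w=1$. This turns the inner integral into
\[
\big(\psi(t)-\psi(r)\big)^{\alpha+\beta-1}\int_{0}^{1}(1-w)^{\alpha-1}w^{\beta-1}\,dw=\big(\psi(t)-\psi(r)\big)^{\alpha+\beta-1}B(\beta,\alpha),
\]
and the classical identity $B(\beta,\alpha)=\dfrac{\Gamma(\alpha)\Gamma(\beta)}{\Gamma(\alpha+\beta)}$ collapses the constant, yielding
\[
I_{a^{+}}^{\alpha,\psi}I_{a^{+}}^{\beta,\psi}f(t)=\frac{1}{\Gamma(\alpha+\beta)}\int_{a}^{t}\psi'(r)\big(\psi(t)-\psi(r)\big)^{\alpha+\beta-1}f(r)\,dr=I_{a^{+}}^{\alpha+\beta,\psi}f(t),
\]
which is exactly the claimed identity.

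The computation is routine; the only points requiring genuine care are the hypotheses that make Fubini's theorem and the change of variables licit. Continuity of $f$ on $J$, together with the monotonicity and $C^{1}$-regularity of $\psi$ with $\psi'\neq 0$ already imposed in Definition \ref{def1.1}, are precisely what justify both steps. If one wanted the identity for merely $L^{1}$ data, the same argument goes through once one checks that $\big(\psi(t)-\psi(s)\big)^{\alpha-1}\big(\psi(s)-\psi(r)\big)^{\beta-1}$ is integrable on the triangle, which holds for $\alpha,\beta>0$. I expect the main (mild) obstacle to be keeping the Beta-function bookkeeping consistent, i.e. not confusing the roles of $\alpha$ and $\beta$ in $B(\cdot,\cdot)$ — a non-issue in the end because $B$ is symmetric.
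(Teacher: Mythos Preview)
Your argument is correct and is precisely the standard proof of this semigroup property (Fubini plus the Beta-function substitution $w=(\psi(s)-\psi(r))/(\psi(t)-\psi(r))$). Note, however, that the paper does not actually prove Lemma~\ref{lem1.1}: it is stated with a citation to \cite{Kilbas} and used as a known preliminary, so there is no in-paper proof to compare against --- what you have written is essentially the proof one finds in that reference.
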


\begin{proposition} \label{prop1.1}\cite{Kilbas,Vanterler}
Let $\alpha,\upsilon>0$, $t>a$ and consider the function $f(t)=\big( \psi(t)-\psi(a)\big)^{\upsilon-1}$. Then we have the following properties:
\begin{itemize}
  \item [(i)] $I_{a^{+}}^{\alpha,\psi}f(t)=\frac{\Gamma(\upsilon)}{\Gamma(\upsilon+\alpha)}\big( \psi(t)-\psi(a)\big)^{\upsilon+\alpha-1}$,
  \item [(ii)] $D_{a^{+}}^{\alpha,\psi}f(t)=\frac{\Gamma(\upsilon)}{\Gamma(\upsilon-\alpha)}\big( \psi(t)-\psi(a)\big)^{\upsilon-\alpha-1}$,
  \item [(iii)] $^{H}D_{a^{+}}^{\alpha,\beta;\psi}f(t)=\frac{\Gamma(\upsilon)}{\Gamma(\upsilon-\alpha)}\big( \psi(t)-\psi(a)\big)^{\upsilon-\alpha-1}$, $\upsilon>n,\ \alpha \in(n-1,n), 0\leq\beta\leq1$,
    \item [(iv)] In particular, if $\gamma=\alpha+\beta(n-\alpha)$ and $f(t)=\big( \psi(t)-\psi(a)\big)^{\upsilon-k},\ \text{with}\ k=1,...,n,\ (\upsilon>\alpha)$, we have $^{H}D_{a^{+}}^{\alpha,\beta;\psi}\big( \psi(t)-\psi(a)\big)^{\gamma-k}=0.$
\end{itemize}
\end{proposition}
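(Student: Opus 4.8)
The plan is to establish the four identities in sequence, each building on its predecessors, so that the only genuine computation is the base case (i).

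For (i), I would write $I_{a^{+}}^{\alpha,\psi}f(t)$ out from Definition \ref{def1.1} and substitute $r=\bigl(\psi(s)-\psi(a)\bigr)/\bigl(\psi(t)-\psi(a)\bigr)$; this is a legitimate change of variable because $\psi$ is increasing with $\psi'>0$. It turns $\psi'(s)\,ds$ into $\bigl(\psi(t)-\psi(a)\bigr)\,dr$, sends the limits of integration to $0$ and $1$, and lets one pull the powers of $\psi(t)-\psi(a)$ outside the integral, leaving the Beta integral $\int_{0}^{1}r^{\upsilon-1}(1-r)^{\alpha-1}\,dr=B(\upsilon,\alpha)=\Gamma(\upsilon)\Gamma(\alpha)/\Gamma(\upsilon+\alpha)$. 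Collecting the powers of $\psi(t)-\psi(a)$ then gives (i).

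For (ii), I would invoke Definition \ref{def1.2}, $D_{a^{+}}^{\alpha,\psi}=\bigl(\tfrac{1}{\psi'(t)}\tfrac{d}{dt}\bigr)^{n}I_{a^{+}}^{n-\alpha,\psi}$ with $n=[\alpha]+1$, apply (i) to obtain $I_{a^{+}}^{n-\alpha,\psi}f(t)=\frac{\Gamma(\upsilon)}{\Gamma(\upsilon+n-\alpha)}\bigl(\psi(t)-\psi(a)\bigr)^{\upsilon+n-\alpha-1}$, and then use the elementary fact $\frac{1}{\psi'(t)}\frac{d}{dt}\bigl(\psi(t)-\psi(a)\bigr)^{p}=p\bigl(\psi(t)-\psi(a)\bigr)^{p-1}$. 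Iterating this $n$ times multiplies by $\Gamma(\upsilon+n-\alpha)/\Gamma(\upsilon-\alpha)$ and lowers the exponent by $n$, and the Gamma factors telescope to give (ii). Identity (iii) then drops out of Definition \ref{def1.3} written in the form $^{H}D_{a^{+}}^{\alpha,\beta;\psi}=I_{a^{+}}^{\gamma-\alpha,\psi}D_{a^{+}}^{\gamma,\psi}$ with $\gamma=\alpha+\beta(n-\alpha)$: applying (ii) with order $\gamma$ (legitimate since $\upsilon>n\geq\gamma$, so no Gamma poles intervene) and then (i) with parameter $\upsilon-\gamma$ again produces cancelling Gamma factors.

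The step demanding the most care is (iv), because when $\upsilon=\gamma-k$ the hypothesis $\upsilon>n$ of (iii) can fail, so its formula cannot be quoted — and indeed the value is $0$, not a pole. Here I would again expand $^{H}D_{a^{+}}^{\alpha,\beta;\psi}=I_{a^{+}}^{\gamma-\alpha,\psi}\bigl(\tfrac{1}{\psi'(t)}\tfrac{d}{dt}\bigr)^{n}I_{a^{+}}^{n-\gamma,\psi}$. Applying (i) with $\upsilon=\gamma-k+1$ — which is positive because $\gamma\geq\alpha>n-1\geq k-1$ — gives $I_{a^{+}}^{n-\gamma,\psi}\bigl(\psi(t)-\psi(a)\bigr)^{\gamma-k}=\frac{\Gamma(\gamma-k+1)}{\Gamma(n-k+1)}\bigl(\psi(t)-\psi(a)\bigr)^{n-k}$, a polynomial in $\psi(t)-\psi(a)$ of integer degree $n-k\leq n-1$. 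Since $\bigl(\tfrac{1}{\psi'(t)}\tfrac{d}{dt}\bigr)^{n}$ lowers the power by one at every application and hence annihilates any power of $\psi(t)-\psi(a)$ of degree strictly below $n$, this yields $0$; finally $I_{a^{+}}^{\gamma-\alpha,\psi}0=0$, which proves (iv).
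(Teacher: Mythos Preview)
Your argument is correct and is the standard one. The paper does not actually prove Proposition~\ref{prop1.1}; it simply quotes the result from \cite{Kilbas,Vanterler} as a preliminary, so there is nothing to compare your route against. Your sequencing --- the Beta-integral computation for (i), the telescoping Gamma factors for (ii) and (iii), and the separate treatment of (iv) via annihilation of the polynomial $(\psi(t)-\psi(a))^{n-k}$ by $\bigl(\tfrac{1}{\psi'(t)}\tfrac{d}{dt}\bigr)^{n}$ --- is exactly how these identities are established in the cited sources, and your care in (iv) to avoid invoking (iii) outside its hypothesis $\upsilon>n$ is well placed.
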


\begin{lemma} \label{lem1.2}\cite{Vanterler}
Let $f\in {\mathcal{C}}^{n}(J,\mathbb{R})$, $\alpha\in(n-1,n)$, $\beta\in[0, 1]$, and $\gamma=\alpha+\beta(n-\alpha)$. Then
$$I_{a^{+}}^{\alpha,\psi}{^{H}D_{a^{+}}^{\alpha,\beta;\psi}}f(t)=f(t)-\sum_{k=1}^{n}\frac{\big( \psi(t)-\psi(a)\big)^{\gamma-k}}{\Gamma(\gamma-k+1)}f_{\psi}^{[n-k]}I_{a^{+}}^{(1-\beta)(n-\alpha),\psi}f(a),$$
for all $t\in J$, where $f_{\psi}^{[n]}f(t)=\bigg(\frac{1}{\psi^{\prime}(t)}\frac{d}{dt} \bigg)^{n}f(t)$.
\end{lemma}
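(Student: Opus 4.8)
\emph{Sketch of proof.} The plan is to trade the $\psi$-Hilfer derivative for a $\psi$-Riemann--Liouville one and then apply a composition identity. Set $\gamma=\alpha+\beta(n-\alpha)$, so that $\gamma\in(n-1,n]$ and $n-\gamma=(1-\beta)(n-\alpha)$. By Definition~\ref{def1.3} we have ${}^{H}D_{a^{+}}^{\alpha,\beta;\psi}f=I_{a^{+}}^{\gamma-\alpha,\psi}D_{a^{+}}^{\gamma,\psi}f$ with $\gamma-\alpha=\beta(n-\alpha)\ge 0$, so Lemma~\ref{lem1.1} gives
\[
I_{a^{+}}^{\alpha,\psi}\,{}^{H}D_{a^{+}}^{\alpha,\beta;\psi}f(t)=I_{a^{+}}^{\alpha,\psi}I_{a^{+}}^{\gamma-\alpha,\psi}D_{a^{+}}^{\gamma,\psi}f(t)=I_{a^{+}}^{\gamma,\psi}D_{a^{+}}^{\gamma,\psi}f(t).
\]
Thus it suffices to prove the $\psi$-Riemann--Liouville composition formula
\[
I_{a^{+}}^{\gamma,\psi}D_{a^{+}}^{\gamma,\psi}f(t)=f(t)-\sum_{k=1}^{n}\frac{\big(\psi(t)-\psi(a)\big)^{\gamma-k}}{\Gamma(\gamma-k+1)}\,g_{\psi}^{[n-k]}(a),\qquad g:=I_{a^{+}}^{n-\gamma,\psi}f=I_{a^{+}}^{(1-\beta)(n-\alpha),\psi}f,
\]
where $g_{\psi}^{[j]}=\big(\tfrac{1}{\psi'}\tfrac{d}{dt}\big)^{j}g$; this is exactly the asserted right-hand side.

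To establish it, recall from Definition~\ref{def1.2} that $D_{a^{+}}^{\gamma,\psi}f=\big(\tfrac{1}{\psi'}\tfrac{d}{dt}\big)^{n}g$. Since $\delta:=\gamma-(n-1)\in(0,1]$, Lemma~\ref{lem1.1} lets me split $I_{a^{+}}^{\gamma,\psi}=I_{a^{+}}^{\delta,\psi}I_{a^{+}}^{n-1,\psi}$, so that $I_{a^{+}}^{\gamma,\psi}D_{a^{+}}^{\gamma,\psi}f=I_{a^{+}}^{\delta,\psi}\big(I_{a^{+}}^{n-1,\psi}\big(\tfrac{1}{\psi'}\tfrac{d}{dt}\big)^{n-1}\phi\big)$ with $\phi:=\big(\tfrac{1}{\psi'}\tfrac{d}{dt}\big)g$. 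For the inner expression I use the $\psi$-Taylor identity $I_{a^{+}}^{m,\psi}\big(\tfrac{1}{\psi'}\tfrac{d}{dt}\big)^{m}\phi(t)=\phi(t)-\sum_{j=0}^{m-1}\frac{(\psi(t)-\psi(a))^{j}}{j!}\phi_{\psi}^{[j]}(a)$, which follows by induction on $m$ from the elementary case $m=1$ (the fundamental theorem of calculus after the substitution $s\mapsto\psi(s)$) together with Proposition~\ref{prop1.1}(i). Applying it with $m=n-1$ reduces $I_{a^{+}}^{\gamma,\psi}D_{a^{+}}^{\gamma,\psi}f$ to $I_{a^{+}}^{\delta,\psi}\phi(t)$ minus a linear combination of the $I_{a^{+}}^{\delta,\psi}$-integrals of the monomials $(\psi(t)-\psi(a))^{j}$; evaluating the latter by Proposition~\ref{prop1.1}(i) and re-indexing with $k=n-1-j$ produces exactly the terms $k=1,\dots,n-1$ of the claimed sum, since $\phi_{\psi}^{[j]}(a)=g_{\psi}^{[j+1]}(a)$.

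It then remains to compute $I_{a^{+}}^{\delta,\psi}\phi(t)=I_{a^{+}}^{\delta,\psi}\big(\tfrac{1}{\psi'}\tfrac{d}{dt}\big)g(t)$ with $\delta\in(0,1]$ and $g=I_{a^{+}}^{1-\delta,\psi}f$. When $\delta=1$ (the case $\beta=1$), $g=f$ and $I_{a^{+}}^{1,\psi}\big(\tfrac{1}{\psi'}\tfrac{d}{dt}\big)f(t)=f(t)-f(a)$, which accounts for both $f(t)$ and the $k=n$ summand. When $\delta\in(0,1)$, $\big(\tfrac{1}{\psi'}\tfrac{d}{dt}\big)g=\big(\tfrac{1}{\psi'}\tfrac{d}{dt}\big)I_{a^{+}}^{1-\delta,\psi}f=D_{a^{+}}^{\delta,\psi}f$, and I invoke the order-less-than-one composition identity $I_{a^{+}}^{\delta,\psi}D_{a^{+}}^{\delta,\psi}f(t)=f(t)-\frac{(\psi(t)-\psi(a))^{\delta-1}}{\Gamma(\delta)}I_{a^{+}}^{1-\delta,\psi}f(a)$, valid because $f\in\mathcal{C}^{n}(J,\mathbb{R})$ makes $I_{a^{+}}^{1-\delta,\psi}f$ absolutely continuous; this is itself a transcription, via the substitution $u=\psi(t)$, of the classical Riemann--Liouville identity. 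Since $\gamma-n=\delta-1$ and $\Gamma(\gamma-n+1)=\Gamma(\delta)$, the subtracted term here is the $k=n$ term $\frac{(\psi(t)-\psi(a))^{\gamma-n}}{\Gamma(\gamma-n+1)}g(a)$. Collecting the three contributions yields the composition formula, and hence the lemma upon unwinding $g_{\psi}^{[n-k]}(a)=f_{\psi}^{[n-k]}I_{a^{+}}^{(1-\beta)(n-\alpha),\psi}f(a)$.

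The main obstacle is precisely this last step. A direct attack on $I_{a^{+}}^{\gamma,\psi}\big(\tfrac{1}{\psi'}\tfrac{d}{dt}\big)^{n}g$ by integrating by parts $n$ times in succession breaks down at the final integration by parts, where the exponent $\gamma-n\le 0$ makes the boundary contribution at $s=t$ singular; peeling off an integral of order $\delta\in(0,1]$ first and then appealing to (or separately re-deriving) the order-less-than-one composition law is the device that circumvents it. Equivalently, one may carry out the whole computation after the change of variables $u=\psi(t)$, under which $I_{a^{+}}^{\alpha,\psi}$ becomes the classical Riemann--Liouville integral of $f\circ\psi^{-1}$ based at $\psi(a)$ and $\tfrac{1}{\psi'}\tfrac{d}{dt}$ becomes $\tfrac{d}{du}$, so that Lemma~\ref{lem1.2} reduces to its well-known Euclidean counterpart.
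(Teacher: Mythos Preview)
The paper does not prove Lemma~\ref{lem1.2}; it is quoted from \cite{Vanterler} without argument, so there is no ``paper's own proof'' to compare against. Your sketch is correct and is essentially the standard route to this identity: rewrite ${}^{H}D_{a^{+}}^{\alpha,\beta;\psi}$ as $I_{a^{+}}^{\gamma-\alpha,\psi}D_{a^{+}}^{\gamma,\psi}$ via Definition~\ref{def1.3}, collapse $I_{a^{+}}^{\alpha,\psi}I_{a^{+}}^{\gamma-\alpha,\psi}$ to $I_{a^{+}}^{\gamma,\psi}$ by Lemma~\ref{lem1.1}, and then establish the $\psi$-Riemann--Liouville composition law $I_{a^{+}}^{\gamma,\psi}D_{a^{+}}^{\gamma,\psi}f(t)=f(t)-\sum_{k=1}^{n}\frac{(\psi(t)-\psi(a))^{\gamma-k}}{\Gamma(\gamma-k+1)}\,g_{\psi}^{[n-k]}(a)$ with $g=I_{a^{+}}^{n-\gamma,\psi}f$. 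Your device of splitting $I_{a^{+}}^{\gamma,\psi}=I_{a^{+}}^{\delta,\psi}I_{a^{+}}^{n-1,\psi}$ with $\delta=\gamma-(n-1)\in(0,1]$, applying the $\psi$-Taylor identity to handle the integer part, and then invoking the order-$\delta$ composition law for the last step is exactly what is done in the source \cite{Vanterler} (and, as you note, is equivalent to transporting the classical Riemann--Liouville identity through the substitution $u=\psi(t)$). The bookkeeping with the re-indexing $k=n-1-j$ and the identifications $\delta-1=\gamma-n$, $n-k+\delta=\gamma-k+1$ is carried out correctly, and the separate treatment of $\delta=1$ (i.e.\ $\beta=1$, $\gamma=n$) is the right way to avoid the singular boundary term. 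No gap.
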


\begin{theorem} \label{thm1.1}\cite{Smart} Let $\Omega\neq\emptyset$ be a closed subset of a Banach
space $X$. If $A:\Omega\rightarrow \Omega $ is a contraction mapping. Then, $A$ admits a unique fixed point.
\end{theorem}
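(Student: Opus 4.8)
The plan is the classical Picard iteration argument. Fix an arbitrary point $x_0\in\Omega$ (possible since $\Omega\neq\emptyset$) and define the sequence of iterates $x_{n+1}=Ax_n$ for $n\geq 0$; since $A$ maps $\Omega$ into itself, all $x_n$ lie in $\Omega$. Let $k\in[0,1)$ be the contraction constant, so that $\|Ax-Ay\|\leq k\|x-y\|$ for all $x,y\in\Omega$. First I would show by induction that $\|x_{n+1}-x_n\|\leq k^n\|x_1-x_0\|$, using $\|x_{n+1}-x_n\|=\|Ax_n-Ax_{n-1}\|\leq k\|x_n-x_{n-1}\|$ at each step.

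Next I would deduce that $\{x_n\}$ is a Cauchy sequence. For $m>n$, the triangle inequality and the bound above give $\|x_m-x_n\|\leq\sum_{j=n}^{m-1}\|x_{j+1}-x_j\|\leq\|x_1-x_0\|\sum_{j=n}^{m-1}k^j\leq\dfrac{k^n}{1-k}\|x_1-x_0\|$, and the right-hand side tends to $0$ as $n\to\infty$ because $0\leq k<1$. Since $X$ is a Banach space, $\{x_n\}$ converges to some $x^\ast\in X$, and since $\Omega$ is closed, $x^\ast\in\Omega$.

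Then I would verify that $x^\ast$ is a fixed point. A contraction is Lipschitz continuous, hence continuous, so passing to the limit in $x_{n+1}=Ax_n$ yields $x^\ast=\lim_{n\to\infty}x_{n+1}=\lim_{n\to\infty}Ax_n=A x^\ast$. Finally, for uniqueness, suppose $x^\ast$ and $y^\ast$ are both fixed points in $\Omega$; then $\|x^\ast-y^\ast\|=\|Ax^\ast-Ay^\ast\|\leq k\|x^\ast-y^\ast\|$, so $(1-k)\|x^\ast-y^\ast\|\leq 0$, and since $1-k>0$ this forces $\|x^\ast-y^\ast\|=0$, i.e. $x^\ast=y^\ast$.

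There is no real obstacle here: the only point requiring a touch of care is the estimate of the Cauchy tail via the geometric series, and the observation that closedness of $\Omega$ (rather than completeness of $\Omega$ itself) suffices because $\Omega$ inherits completeness as a closed subset of the complete space $X$.
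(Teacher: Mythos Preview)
Your proof is correct and is precisely the classical Picard iteration argument for the Banach contraction principle. Note, however, that the paper does not actually prove this theorem: it is stated as a cited preliminary result from \cite{Smart} and used as a tool, so there is no ``paper's own proof'' to compare against. Your argument is the standard one found in essentially any reference, including the cited source.
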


\begin{definition}\label{def1.4} \cite{Kuratowski,Banas}
Let $(X,d)$ be a metric space and $M$ a bounded subset of $X$. Then the Kuratowski measure of noncompactness of $M$, denoted by $\alpha(M)$, is the infimum  of the set of all numbers $\epsilon>0$ such that $M$ can be covered by a finite number of sets with diameters $<0$, that is
\[\alpha(M)=inf\bigg\{\epsilon>0: M\subset \bigcup_{i=1}^{n}M_{i}, M_{i}\subset X, diam(M_{i})<\epsilon (i=1,\cdots,n;n\in\mathbb{N}) \bigg \}.\]
The function $\alpha$ is called Kuratowski's measure of noncompactness.
\end{definition}

\begin{definition}\label{def1.5} \cite{Granas}
Let $\mathcal{A}: D(\mathcal{A})\subseteq X \rightarrow X$ be a bounded and continuous operator on a
Banach space $X$. Then $\mathcal{A}$ is called a condensing map if $\alpha(\mathcal{A}(B))<\alpha(B)$ for all bounded sets $B\subset D(\mathcal{A})$, where $\alpha$ denotes the Kuratowski measure of noncompactness.
\end{definition}

\begin{lemma} \label{lem1.3}  \emph{\cite[Example 11.7]{Zeidler}}
The map $\mathcal{K} + \mathcal{C}$ is a $k$-set contraction with $0 \leq k < 1$; and thus
also condensing, if
\begin{itemize}
  \item [(i)] $\mathcal{K},\ \mathcal{C} : D \subset X \rightarrow X$ are operators on the Banach space $X$;
  \item [(ii)] $\mathcal{K}$ is $k$-contractive, i.e.,
  \[\|\mathcal{K}x-\mathcal{K}y\|\leq\|x-y\|\]
  for all $x, y \in D$ and fixed $k \in [0, 1)$;
  \item [(iii)] $\mathcal{C}$ is compact.
\end{itemize}
\end{lemma}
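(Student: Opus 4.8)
The plan is to obtain the statement as an immediate consequence of the standard algebraic properties of the Kuratowski measure of noncompactness $\alpha$ from Definition~\ref{def1.4}. First I would record (and, if a self-contained account is wanted, re-prove) the following elementary facts, valid for bounded subsets $A,B$ of the Banach space $X$: \emph{(a) monotonicity}, $A\subseteq B \Rightarrow \alpha(A)\le\alpha(B)$; \emph{(b) algebraic semi-additivity}, $\alpha(A+B)\le\alpha(A)+\alpha(B)$, where $A+B=\{x+y:x\in A,\ y\in B\}$; \emph{(c) regularity}, $\alpha(B)=0$ if and only if $\overline{B}$ is compact; and \emph{(d) the Lipschitz estimate}, if $T:D\to X$ satisfies $\|Tx-Ty\|\le k\|x-y\|$ on $D$ then $\alpha(T(B))\le k\,\alpha(B)$ for every bounded $B\subseteq D$. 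Property (d) is proved by taking a finite cover of $B$ by sets of diameter $<\alpha(B)+\varepsilon$ and observing that their $T$-images have diameter $<k(\alpha(B)+\varepsilon)$; property (b) follows by covering $A$ and $B$ separately and forming the finitely many pairwise Minkowski sums of the pieces; property (c) is the characterization of relative compactness via total boundedness in a complete space.

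Granting these, the argument is two lines. Fix an arbitrary bounded set $B\subset D$. Since $(\mathcal{K}+\mathcal{C})x\in\mathcal{K}(B)+\mathcal{C}(B)$ for every $x\in B$, we have $(\mathcal{K}+\mathcal{C})(B)\subseteq\mathcal{K}(B)+\mathcal{C}(B)$, hence, by (a) and (b),
\[
\alpha\big((\mathcal{K}+\mathcal{C})(B)\big)\ \le\ \alpha\big(\mathcal{K}(B)\big)+\alpha\big(\mathcal{C}(B)\big).
\]
By the $k$-contractivity assumed in (ii) together with (d), $\alpha(\mathcal{K}(B))\le k\,\alpha(B)$; by the compactness of $\mathcal{C}$ in (iii), $\mathcal{C}(B)$ is relatively compact, so (c) gives $\alpha(\mathcal{C}(B))=0$. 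Therefore $\alpha((\mathcal{K}+\mathcal{C})(B))\le k\,\alpha(B)$, i.e. $\mathcal{K}+\mathcal{C}$ is a $k$-set contraction with the same constant $k\in[0,1)$. Finally, for any bounded $B$ with $\alpha(B)>0$ one gets $\alpha((\mathcal{K}+\mathcal{C})(B))\le k\,\alpha(B)<\alpha(B)$ because $k<1$, which is exactly the condensing property of Definition~\ref{def1.5}.

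The computation combining the inclusion with (a)--(d) is completely routine; the only points that need real care are the auxiliary lemmas (b), (c), (d) themselves. Among these I expect the Lipschitz estimate (d) to be the main technical obstacle to write cleanly: one must pass from an $\varepsilon$-nearly optimal finite cover of $B$ by small-diameter sets to an induced finite cover of $T(B)$ and track precisely how diameters are rescaled by the factor $k$, then let $\varepsilon\to0$. The Minkowski-sum bound (b) is of the same flavour (one checks that the finite family of pairwise sums of the two covers is again a cover and that each pairwise sum has diameter at most the sum of the two diameters). Since in Zeidler's text these are part of the standard toolkit for measures of noncompactness, one may instead simply invoke them, but for completeness I would include the short covering arguments.
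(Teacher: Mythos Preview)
Your argument is correct and is exactly the standard proof via the elementary properties of the Kuratowski measure of noncompactness; there is nothing to compare it against here, because the paper does not prove this lemma at all --- it is simply quoted from \cite[Example~11.7]{Zeidler} and used as a black box in the proof of Theorem~\ref{thmm2}. Your sketch in fact reproduces Zeidler's argument. One small remark: the inequality in hypothesis~(ii) of the lemma as printed in the paper is missing the factor $k$ (it should read $\|\mathcal{K}x-\mathcal{K}y\|\le k\|x-y\|$); you have tacitly, and correctly, used the intended version in invoking your property~(d).
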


\begin{theorem} \label{thm1.2}\cite{Sadovskii}
 Let $B$ be a convex, bounded and closed subset of a Banach space $X$ and
let $\mathcal{A}: B\rightarrow B$ be a condensing map. Then $\mathcal{A}$ has a fixed point.
\end{theorem}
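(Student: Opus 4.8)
The statement is the classical fixed point theorem of Sadovskii, so the plan is to reduce it to Schauder's fixed point theorem by producing a \emph{minimal} closed convex $\mathcal{A}$-invariant subset of $B$, on which the condensing hypothesis forces compactness. First I would fix a point $x_0\in B$ and introduce the family
\[
\Sigma=\bigl\{\,C\subseteq B:\ C\ \text{is nonempty, closed, convex},\ x_0\in C,\ \mathcal{A}(C)\subseteq C\,\bigr\}.
\]
Since $B$ itself is convex, closed and bounded and $\mathcal{A}(B)\subseteq B$, we have $B\in\Sigma$, so $\Sigma\neq\emptyset$. Put $B_0=\bigcap_{C\in\Sigma}C$. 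An arbitrary intersection of closed convex sets is closed and convex, every member of $\Sigma$ contains $x_0$, and if $x\in B_0$ then $x\in C$, hence $\mathcal{A}x\in C$, for every $C\in\Sigma$, so $\mathcal{A}x\in B_0$. Thus $B_0\in\Sigma$ and it is the smallest element of $\Sigma$; in particular $B_0\neq\emptyset$ because $x_0\in B_0$.

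Next I would set $B_1=\overline{\mathrm{conv}}\bigl(\mathcal{A}(B_0)\cup\{x_0\}\bigr)$. Because $\mathcal{A}(B_0)\cup\{x_0\}\subseteq B_0$ and $B_0$ is closed and convex, $B_1\subseteq B_0$; consequently $\mathcal{A}(B_1)\subseteq\mathcal{A}(B_0)\subseteq B_1$, so $B_1$ is again a member of $\Sigma$, which forces $B_0\subseteq B_1$ and hence $B_0=B_1$. Now I invoke the standard properties of the Kuratowski measure $\alpha$ — invariance under the closed convex hull, $\alpha(M\cup\{x_0\})=\alpha(M)$ for nonempty $M$, and monotonicity (see \cite{Banas}) — to get
\[
\alpha(B_0)=\alpha(B_1)=\alpha\bigl(\mathcal{A}(B_0)\cup\{x_0\}\bigr)=\alpha\bigl(\mathcal{A}(B_0)\bigr).
\]
If $\alpha(B_0)>0$, then since $\mathcal{A}$ is condensing we would have $\alpha(\mathcal{A}(B_0))<\alpha(B_0)$, contradicting the displayed equality; hence $\alpha(B_0)=0$. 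By the regularity of $\alpha$ the set $\overline{B_0}=B_0$ is compact. Finally $B_0$ is a nonempty, compact, convex set with $\mathcal{A}(B_0)\subseteq B_0$ and $\mathcal{A}$ continuous, so Schauder's fixed point theorem \cite{Granas} provides $u\in B_0\subseteq B$ with $\mathcal{A}u=u$.

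The part that requires care is the identification $B_0=B_1$ together with the bookkeeping for $\alpha$: one must have available both the regularity property ($\alpha(M)=0$ iff $\overline{M}$ is compact) and the less elementary invariance $\alpha(\overline{\mathrm{conv}}\,M)=\alpha(M)$, which are classical facts about the Kuratowski measure of noncompactness. Everything else — nonemptiness of $B_0$, the invariance chain $\mathcal{A}(B_1)\subseteq\mathcal{A}(B_0)\subseteq B_1$, and the final appeal to Schauder — is routine once the minimal invariant set has been extracted.
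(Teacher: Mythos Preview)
Your argument is the standard proof of Sadovskii's fixed point theorem and is correct: the construction of the minimal closed convex $\mathcal{A}$-invariant set $B_0$ containing a fixed point $x_0$, the identification $B_0=\overline{\mathrm{conv}}\bigl(\mathcal{A}(B_0)\cup\{x_0\}\bigr)$, and the resulting equality $\alpha(B_0)=\alpha(\mathcal{A}(B_0))$ forcing $\alpha(B_0)=0$ are all handled properly, and the final appeal to Schauder is legitimate.

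Note, however, that the paper does \emph{not} give its own proof of this statement. Theorem~\ref{thm1.2} is simply quoted from \cite{Sadovskii} as a classical tool to be applied later (in Theorem~\ref{thmm2}); no argument is supplied. So there is nothing in the paper to compare against --- you have written a valid proof of a result that the authors only cite.
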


Also, we present the Krasnoselskii-Schaefer's fixed point theorem due to Burton and Kirk \cite{Burton}, which is one of the key tools in our paper.

\begin{theorem} \label{thm1.3}\cite{Burton}
Let $X$ be a Banach space, and $A_{1}, A_{2} : X \rightarrow X$ be two operators such that $A_{1}$ is a contraction, and $A_{2}$ is completely continuous. Then, either
\begin{itemize}
 \item [(a)] the operator equation $u = A_{1}(u) + A_{2}(u)$ has a solution, or
  \item [(b)] the set $\Upsilon=\Big\{u\in X: \gamma A_{1}\big(\frac{u}{\gamma}\big)+\gamma A_{2}(u)=u\Big\}$ is unbounded for some $\gamma \in (0,1)$.
\end{itemize}
\end{theorem}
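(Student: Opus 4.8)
The plan is to absorb the contraction $A_{1}$ into the compact part and then invoke the classical Schaefer (Leray--Schauder) alternative; observe that with $A_{1}\equiv 0$ the statement reduces exactly to Schaefer's theorem, so what is really needed is a way to ``divide out'' the contraction in a controlled fashion.

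The first step is the elementary observation that, if $A_{1}$ is a contraction with constant $k\in[0,1)$, then $I-A_{1}$ is a bijection of $X$ onto $X$ and $(I-A_{1})^{-1}$ is globally Lipschitz with constant $(1-k)^{-1}$. Indeed, for each fixed $y\in X$ the map $x\mapsto A_{1}x+y$ is a contraction on the Banach space $X$, hence has a unique fixed point by Theorem \ref{thm1.1}; calling that point $(I-A_{1})^{-1}y$ and subtracting the defining identities for two values $y_{1},y_{2}$ gives the Lipschitz bound immediately.

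Next, set $B:=(I-A_{1})^{-1}A_{2}\colon X\to X$ and check that $B$ is completely continuous. It is continuous as a composition of continuous maps. If $S\subset X$ is bounded, then $\overline{A_{2}(S)}$ is compact because $A_{2}$ is completely continuous, and since $(I-A_{1})^{-1}$ is continuous the set $(I-A_{1})^{-1}\big(\overline{A_{2}(S)}\big)$ is compact and contains $B(S)$; hence $\overline{B(S)}$ is compact. Schaefer's fixed point theorem therefore applies to $B$: either $B$ has a fixed point, or the set $\mathcal{E}:=\{u\in X:\ u=\gamma Bu\ \text{for some}\ \gamma\in(0,1)\}$ is unbounded.

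Finally one translates these two outcomes back to the original equation. A point $u$ satisfies $u=Bu$ iff $(I-A_{1})u=A_{2}u$, i.e. $u=A_{1}u+A_{2}u$, which is alternative (a). For $0<\gamma<1$, the relation $u=\gamma Bu$ is equivalent to $(I-A_{1})^{-1}A_{2}u=u/\gamma$, hence to $A_{2}u=(I-A_{1})(u/\gamma)$, i.e. to $u=\gamma A_{1}(u/\gamma)+\gamma A_{2}u$; thus $\mathcal{E}$ coincides with $\Upsilon$, and its unboundedness is alternative (b). The only genuinely delicate point in the whole argument is the reduction step: one must make sure that $(I-A_{1})^{-1}$ is well defined, Lipschitz continuous, and carries relatively compact sets to relatively compact sets, so that $B$ is legitimately completely continuous and Schaefer's theorem really does apply; everything past that is a routine equivalence of equations.
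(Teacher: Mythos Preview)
The paper does not prove this theorem at all: it is stated in the preliminaries with a citation to Burton and Kirk \cite{Burton} and then used as a black box in Theorem~\ref{thmm3}. So there is no ``paper's own proof'' to compare against.

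Your argument is correct and is in fact the standard route to the Burton--Kirk alternative: invert $I-A_{1}$ via the Banach fixed point theorem, push the problem onto the single completely continuous operator $B=(I-A_{1})^{-1}A_{2}$, and apply Schaefer. The reduction $u=\gamma Bu \iff u=\gamma A_{1}(u/\gamma)+\gamma A_{2}u$ is computed correctly.

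One small point of presentation: Schaefer's alternative gives that the set $\mathcal{E}=\{u:\ u=\gamma Bu\ \text{for some}\ \gamma\in(0,1)\}$ (a union over $\gamma$) is unbounded, whereas the theorem as stated here fixes a $\gamma$ and asserts that $\Upsilon$ is unbounded \emph{for some} $\gamma\in(0,1)$. Your claim that ``$\mathcal{E}$ coincides with $\Upsilon$'' conflates the two. In applications one always verifies boundedness uniformly in $\gamma$, so the distinction is harmless, but strictly speaking the passage from the unboundedness of $\mathcal{E}$ to alternative~(b) as written requires one more line (or an appeal to the slightly sharper form of the Leray--Schauder alternative). This is a cosmetic issue, not a gap in the mathematics.
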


\begin{definition}\label{def1.6}
The $\psi$-Hilfer fractional boundary value problem \eqref{eq09}-\eqref{eq010} is said to be Ulam-Hyers stable if there exists a real number $c_{f}>0$ such that for each $\epsilon>0$ and $u\in \mathcal{C}(J,\mathbb{R})$ satisfying
\begin{equation} \label{eq.UH}
\Big|\big(^{H}D^{\nu,\beta;\psi}_{a^{+}}+\lambda ^{H}D^{\nu-1,\beta,\psi}_{a^{+}}\big)u(t)- f(t,u(t),(\mathcal{V}u)(t), I_{a^{+}}^{2-\mu,\psi}u(t))\Big|<\epsilon,\ \ \text{for all}\ t\in J,
\end{equation}
there exists a solution $x$ of problem \eqref{eq09}-\eqref{eq010} such that
\[|u(t)-x(t)|<c_{f}\epsilon \ \ \text{for all}\ t\in J.\]
\end{definition}

\begin{definition} \label{def1.7}
The $\psi$-Hilfer fractional boundary value problem \eqref{eq09}-\eqref{eq010} is called generalized Ulam-Hyers stable if there exists $\varphi_{f}\in C(\mathbb{R_{+}},\mathbb{R_{+}})$ with $ \varphi_{f}(0)=0$ such that for each $\epsilon>0$ and for each solution $u\in \mathcal{C}(J, \mathbb{R})$ of the inequality \eqref{eq.UH}, there exists a solution $x$ of problem \eqref{eq09}-\eqref{eq010} for which
$$ \big|u(t)-x(t)\big|\leq \varphi_{f}(\epsilon),\ t\in J.$$
\end{definition}

\begin{remark} \label{rem 0.2.1}
A function $u\in \mathcal{C}(J, \mathbb{R})$ is a solution of \eqref{eq.UH} if and only if there exists a function $z\in C(J, \mathbb{R})$ (which depends on $y$) such that
\begin{itemize}
  \item [(i)] $|z(t)|\leq \epsilon,\ t\in J,$
  \item [(ii)] $\big(^{H}D^{\nu,\beta;\psi}_{a^{+}}+\lambda ^{H}D^{\nu-1,\beta,\psi}_{a^{+}}\big)u(t)= f(t,u(t),(\mathcal{V}u)(t), I_{a^{+}}^{2-\mu,\psi}u(t))+z(t),\ t\in J.$
\end{itemize}
\end{remark}

For convenience, we denote
$$ \Delta=\psi_{a}(T)-\sum_{i=1}^{m}\frac{\psi_{a}^{\mu-2}(\eta_{i})}{\Gamma(\mu-1)}\bigg(\frac{\alpha_{i}\psi_{a}(\eta_{i})}{\mu-1}
+\beta_{i}\bigg)
,\ \
\mathcal{K}_{t}^{\nu}(s)=\frac{\psi^{\prime}(s)(\psi(t)-\psi(s))^{\nu-1}}{\Gamma(\nu)},$$
with $$\psi_{a}^{\varsigma-1}(t)=[\psi_{a}(t)]^{\varsigma-1}=(\psi(t)-\psi(a))^{\varsigma-1}.$$
\begin{lemma}\label{lem1}
Let $1<\nu\leq2$, $0\leq\beta<1$. Assume that $\Delta\neq 0$ and $ h \in C(J,\mathbb{R})$, then $u\in \mathcal{C}^{2}(J,\mathbb{R})$ is a solution of the sequential $\psi$-Hilfer fractional boundary value problem
\begin{equation}\label{eq1}
 \Big(^{H}D^{\nu,\beta;\psi}_{a^{+}}+\lambda ^{H}D^{\nu-1,\beta,\psi}_{a^{+}}\Big)u(t)=h(t),\  t \in J,
\end{equation}
\begin{equation}\label{eq2}
    u(a)=0,\  I^{2-\mu,\psi}_{a^{+}}u(T) =\sum_{i=1}^{m}\alpha_{i}u(\eta_{i})+\sum_{i=1}^{m}\beta_{i}u^{\prime}(\eta_{i})+g(u(\xi)),
 \end{equation}
if and only if $u$ satisfies the integral equation
\begin{eqnarray}\label{eq3}
u(t) &=&\int_{a}^{t}\mathcal{K}_{t}^{\nu}(s)h(s)ds+\frac{\psi_{a}^{\mu-1}(t)}{\Delta\Gamma(\mu)}
\Bigg[\sum_{i=1}^{m}\alpha_{i}\bigg(
\int_{a}^{\eta_{i}}\mathcal{K}_{\eta_{i}}^{\nu}(s)h(s)ds\bigg)\nonumber\\
&&+\sum_{i=1}^{m}\beta_{i}\psi(\eta_{i})\bigg(
\int_{a}^{\eta_{i}}\mathcal{K}_{\eta_{i}}^{\nu-1}(s)h(s)ds\bigg)-\int_{a}^{T}\mathcal{K}_{T}^{2-\mu+\nu}(s)h(s)ds+g(u(\xi))\Bigg]\nonumber\\
&&+\lambda \Bigg[\frac{\psi_{a}^{\mu-1}(t)}{\Delta\Gamma(\mu)}\Bigg(\int_{a}^{T}\mathcal{K}_{T}^{3-\nu}(s)u(s)ds      -\bigg(\sum_{i=1}^{m}\alpha_{i}\int_{a}^{\eta_{i}}\psi^{\prime}(s)u(s)ds \nonumber\\
&&+\sum_{i=1}^{m}\beta_{i}\psi^{\prime}(\eta_{i})
u(\eta_{i})\bigg)\Bigg)-\int_{a}^{t}\psi^{\prime}(s)u(s)ds \Bigg].
\end{eqnarray}
\end{lemma}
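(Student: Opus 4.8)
The plan is to prove the equivalence in two directions, starting from the differential problem and integrating, then checking the boundary conditions force the derived formula.

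\medskip

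\textbf{Necessity.} Suppose $u\in\mathcal C^2(J,\mathbb R)$ solves \eqref{eq1}--\eqref{eq2}. The key algebraic observation is that the sequential operator can be factored: note that $^HD^{\nu,\beta;\psi}_{a^+}u+\lambda\,{}^HD^{\nu-1,\beta;\psi}_{a^+}u = {}^HD^{\nu-1,\beta;\psi}_{a^+}\bigl(\tfrac{1}{\psi'(t)}\tfrac{d}{dt}u + \lambda u\bigr)$ when $u(a)=0$ kills the boundary term (this uses the relation $D^{\gamma,\psi}_{a^+} = \frac{1}{\psi'}\frac{d}{dt} D^{\gamma-1,\psi}_{a^+}$ together with Definition~\ref{def1.3}). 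First I would apply $I_{a^+}^{\nu,\psi}$ to both sides of \eqref{eq1}. Using Lemma~\ref{lem1.2} with $n=2$, $\alpha=\nu$ and $\gamma=\mu=\nu+\beta(2-\nu)$, the left side becomes $u(t)+\lambda\,I_{a^+}^{1,\psi}u(t) - c_1\psi_a^{\mu-1}(t) - c_2\psi_a^{\mu-2}(t)$, where $c_1,c_2$ absorb the initial-data constants, and where $I_{a^+}^{1,\psi}u(t)=\int_a^t\psi'(s)u(s)\,ds$. Hence
\[
u(t) = \int_a^t \mathcal K_t^{\nu}(s)h(s)\,ds - \lambda\int_a^t\psi'(s)u(s)\,ds + c_1\psi_a^{\mu-1}(t) + c_2\psi_a^{\mu-2}(t).
\]
The condition $u(a)=0$ forces $c_2=0$ (since $\mu-2<0$ would blow up, or more carefully, $1-\beta)(2-\nu)$-integrability of $u$ near $a$ forces it; the cleanest route is to note $I_{a^+}^{2-\mu,\psi}u(a)=0$ is implied and pick off the $\psi_a^{\mu-2}$ coefficient). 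Then I would determine $c_1$ by imposing the nonlocal condition $I^{2-\mu,\psi}_{a^+}u(T)=\sum\alpha_i u(\eta_i)+\sum\beta_i u'(\eta_i)+g(u(\xi))$. Applying $I_{a^+}^{2-\mu,\psi}$ to the displayed formula, using Lemma~\ref{lem1.1} to combine $I_{a^+}^{2-\mu,\psi}I_{a^+}^{\nu,\psi}=I_{a^+}^{2-\mu+\nu,\psi}$, Proposition~\ref{prop1.1}(i) to compute $I_{a^+}^{2-\mu,\psi}\psi_a^{\mu-1}(t)=\frac{\Gamma(\mu)}{\Gamma(2)}\psi_a(t)$, and evaluating $u(\eta_i)$, $u'(\eta_i)$ from the formula, everything collapses into a single linear equation for $c_1$ whose coefficient is exactly $\Delta/\Gamma(\mu)$ (after isolating the $h$-terms and the $\lambda u$-terms on one side). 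Dividing by this coefficient — legitimate since $\Delta\neq0$ — yields $c_1$, and substituting back gives \eqref{eq3}.

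\medskip

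\textbf{Sufficiency.} Conversely, given that $u$ satisfies \eqref{eq3}, I would apply the operator $^HD^{\nu,\beta;\psi}_{a^+}+\lambda\,{}^HD^{\nu-1,\beta;\psi}_{a^+}$ term by term. By Proposition~\ref{prop1.1}(iv), $^HD^{\nu,\beta;\psi}_{a^+}$ annihilates $\psi_a^{\mu-1}(t)$ and $\psi_a^{\mu-2}(t)$ (the $\gamma-k$ powers with $k=1,2$), so all the boundary-correction terms multiplied by $\psi_a^{\mu-1}(t)/\Delta$ vanish under $^HD^{\nu,\beta;\psi}_{a^+}$; one must still track their contribution under $^HD^{\nu-1,\beta;\psi}_{a^+}$ and under the $-\lambda\int_a^t\psi'(s)u(s)\,ds$ piece, but using $^HD^{\nu-1,\beta;\psi}_{a^+}\psi_a^{\mu-1}(t)$ from Proposition~\ref{prop1.1}(iii) and the identity $^HD^{\nu,\beta;\psi}_{a^+}\int_a^t\psi'(s)u(s)\,ds = {}^HD^{\nu-1,\beta;\psi}_{a^+}u(t)$ (again via the factorization), these terms cancel in pairs. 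The $\int_a^t\mathcal K_t^{\nu}(s)h(s)\,ds = I_{a^+}^{\nu,\psi}h(t)$ term returns $h(t)$ under $^HD^{\nu,\beta;\psi}_{a^+}$ by Lemma~\ref{lem1.2} (with vanishing boundary data), and under $\lambda\,{}^HD^{\nu-1,\beta;\psi}_{a^+}$ it cancels against part of the $-\lambda\int_a^t\psi'(s)u(s)\,ds$ term. What survives is exactly $h(t)$, establishing \eqref{eq1}. Finally I would verify the two boundary conditions: $u(a)=0$ is immediate because every term in \eqref{eq3} carries a factor $\psi_a^{\mu-1}(t)$, an integral $\int_a^t$, or $\psi_a^{\mu-1}(t)$ again — all vanishing at $t=a$ (using $g(a)=0$ is not needed here, only that the prefactors vanish) — and the nonlocal condition is recovered by applying $I_{a^+}^{2-\mu,\psi}$ to \eqref{eq3} and running the same computation as in the necessity step in reverse, which is consistent precisely by the way $c_1$ was chosen.

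\medskip

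\textbf{Main obstacle.} The routine-but-delicate part is the bookkeeping in the necessity direction: correctly identifying $n=2$, the two initial-condition constants, showing the $\psi_a^{\mu-2}$ coefficient must vanish, and then carrying out the substitution into the nonlocal condition so that the coefficient of $c_1$ is exactly $\Delta/\Gamma(\mu)$ and not something else — in particular getting the factors $\frac{1}{\mu-1}$, $\Gamma(\mu-1)$, and the $\psi(\eta_i)$ versus $\psi_a(\eta_i)$ distinctions right in the terms involving $\beta_i u'(\eta_i)$, since differentiating $\psi_a^{\mu-1}(t)$ produces $(\mu-1)\psi'(t)\psi_a^{\mu-2}(t)$ and differentiating $I_{a^+}^{\nu,\psi}h$ produces $I_{a^+}^{\nu-1,\psi}h$ scaled appropriately. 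The factorization identity for the sequential operator and the clean application of Lemma~\ref{lem1.2} make everything else essentially mechanical.
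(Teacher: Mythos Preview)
Your proposal is correct and follows essentially the same route as the paper: apply $I_{a^+}^{\nu,\psi}$, invoke Lemma~\ref{lem1.2} to produce two constants, eliminate $c_2$ via $u(a)=0$, solve for $c_1$ from the nonlocal condition, and for the converse apply the sequential operator termwise using Proposition~\ref{prop1.1}(iv) to kill the $\psi_a^{\mu-1}$ factors and then verify the boundary conditions directly. The one spot where the paper is more explicit than your sketch is the reduction of $I_{a^+}^{\nu,\psi}\,{}^HD_{a^+}^{\nu-1,\beta;\psi}u$ to $I_{a^+}^{1,\psi}u$: rather than relying on the operator factorization you mention, the paper writes $I_{a^+}^{\nu,\psi}=I_{a^+}^{1,\psi}I_{a^+}^{\nu-1,\psi}$ (Lemma~\ref{lem1.1}) and applies Lemma~\ref{lem1.2} at order $\sigma=\nu-1$ with $n=1$ and $\gamma=\mu-1$, which yields the extra boundary term $\frac{I_{a^+}^{2-\mu,\psi}u(a)}{\Gamma(\mu)}\psi_a^{\mu-1}(t)$ that vanishes together with $c_2$.
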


\begin{proof}
Applying $I^{\nu,\psi}_{a^{+}}$ to both sides of \eqref{eq1}, we get
\begin{equation}\label{eq3}
I^{\nu,\psi}_{a^{+}}D^{\nu,\beta,\psi}_{a^{+}}u(t)+\lambda I^{\nu,\psi}_{a^{+}}D^{\nu-1,\beta,\psi}_{a^{+}}u(t)=I^{\nu,\psi}_{a^{+}}h(t).
\end{equation}
By using Lemma \ref{lem }, and definitions \ref{def}, we may reduce \eqref{eq3} to an equivalent integral equation
\begin{eqnarray}\label{eq3}
u(t)  &=&I^{\nu,\psi}_{a^{+}}h(t)-\lambda I^{\nu,\psi}_{a^{+}}D^{\nu-1,\beta,\psi}_{a^{+}}u(t)
+\frac{\psi_{a}^{\mu-1}(t)}{\Gamma(\mu)}c_{1}
+\frac{\psi_{a}^{\mu-2}(t)}{\Gamma(\mu-1)}c_{2}\nonumber\\
&=&I^{\nu,\psi}_{a^{+}}h(t)-\lambda I^{1,\psi}_{a^{+}}I^{\nu-1,\psi}_{a^{+}}D^{\nu-1,\beta,\psi}_{a^{+}}u(t)
+\frac{\psi_{a}^{\mu-1}(t)}{\Gamma(\mu)}c_{1}
+\frac{\psi_{a}^{\mu-2}(t)}{\Gamma(\mu-1)}c_{2}
\end{eqnarray}
where $c_{1},c_{2}\in \mathbb{R}$ are arbitrary constants.\\
Put $\sigma=\nu-1$. Since $1<\nu\leq 2$, then $0<\sigma\leq1$,\ $\gamma=\sigma+\beta(1-\sigma)=\mu-1$. The equation \eqref{eq3} becomes
\begin{equation*}
u(t)=I^{\nu,\psi}_{a^{+}}h(t)-\lambda I^{1,\psi}_{a^{+}}I^{\sigma,\psi}_{a^{+}}D^{\sigma,\beta,\psi}_{a^{+}}u(t)
+\frac{\psi_{a}^{\mu-1}(t)}{\Gamma(\mu)}c_{1}
+\frac{\psi_{a}^{\mu-2}(t)}{\Gamma(\mu-1)}c_{2}
\end{equation*}
By Lemma \ref{lem}, we get
\begin{eqnarray*}
u(t)&=&I^{\nu,\psi}_{a^{+}}h(t)-\lambda I^{1,\psi}_{a^{+}}\bigg[u(t)-\frac{I^{1-\gamma,\psi}_{a^{+}}u(t)\big|_{t=a}}{\Gamma(\gamma)}\psi_{a}^{\gamma-1}(t)\bigg]
+\frac{\psi_{a}^{\mu-1}(t)}{\Gamma(\mu)}c_{1}
+\frac{\psi_{a}^{\mu-2}(t)}{\Gamma(\mu-1)}c_{2}\\
&=&I^{\nu,\psi}_{a^{+}}h(t)-\lambda I^{1,\psi}_{a^{+}}u(t)+\lambda\frac{I^{1-\gamma,\psi}_{a^{+}}u(t)\big|_{t=a}}{\Gamma(\gamma+1)}\psi_{a}^{\gamma}(t)
+\frac{\psi_{a}^{\mu-1}(t)}{\Gamma(\mu)}c_{1}
+\frac{\psi_{a}^{\mu-2}(t)}{\Gamma(\mu-1)}c_{2}\\
&=&I^{\nu,\psi}_{a^{+}}h(t)-\lambda I^{1,\psi}_{a^{+}}u(t)+\lambda\frac{I^{2-\mu,\psi}_{a^{+}}u(t)\big|_{t=a}}{\Gamma(\mu)}\psi_{a}^{\mu-1}(t)
+\frac{\psi_{a}^{\mu-1}(t)}{\Gamma(\mu)}c_{1}
+\frac{\psi_{a}^{\mu-2}(t)}{\Gamma(\mu-1)}c_{2}
\end{eqnarray*}
By the initial condition $u(a)=0$, we get $c_{2}=0$,$ \frac{I^{2-\mu,\psi}_{a^{+}}u(t)\big|_{t=a}}{\Gamma(\mu)}=0$ and hence
\begin{equation} \label{eq4}
u(t)  =I^{\nu,\psi}_{a^{+}}h(t)-\lambda I^{1,\psi}_{a^{+}}u(t)+\frac{\psi_{a}^{\mu-1}(t)}{\Gamma(\mu)}c_{1}.
\end{equation}
In view of Lemma \ref{ }, by taking the operator $ I^{2-\mu,\psi}_{0^{+}}$ into \eqref{eq4}, and differentiation \eqref{eq4}, we obtain
\begin{equation} \label{eqn5}
I^{2-\mu,\psi}_{a^{+}}u(t)=I^{2-\mu+\nu,\psi}_{a^{+}}h(t)-\lambda I^{3-\mu,\psi}_{a^{+}}u(t)+\psi_{a}(t)c_{1},
\end{equation}
\begin{equation} \label{eqn6}
u^{\prime}(t)=\psi(t)I^{\nu-1,\psi}_{a^{+}}h(t)-\lambda\psi^{^\prime}(t)u(t)
+\frac{\psi_{a}^{\mu-2}(t)}{\Gamma(\mu-1)}c_{1}.
\end{equation}
By the second boundary condition in \eqref{eq2}, together with \eqref{eqn5}-\eqref{eqn6}, we get
 \begin{equation*}
\begin{split}
I^{2-\mu+\nu,\psi}_{a^{+}}h(T)-\lambda I^{3-\mu,\psi}_{a^{+}}u(T)+\psi_{a}(T)c_{1}&=\sum_{i=1}^{m}\alpha_{i}u(\eta_{i})
+\sum_{i=1}^{m}\beta_{i}u^{\prime}(\eta_{i})+g(u(\xi))\\
&= \sum_{i=1}^{m}\alpha_{i}\Big(I^{\nu,\psi}_{a^{+}}h(\eta_{i})-\lambda I^{1,\psi}_{a^{+}}u(\eta_{i})+\frac{\psi_{a}^{\mu-1}(\eta_{i})}{\Gamma(\mu)}c_{1}\Big)\\
&\quad+\sum_{i=1}^{m}\beta_{i}\Big(\psi(\eta_{i})I^{\nu-1,\psi}_{a^{+}}h(\eta_{i})-\lambda\psi^{^\prime}
(\eta_{i})u(\eta_{i})\\
&\quad+\frac{\psi_{a}^{\mu-2}(\eta_{i})}{\Gamma(\mu-1)}c_{1} \Big)+g(u(\xi)),
\end{split}
\end{equation*}
from which we get
\begin{eqnarray*}
c_{1}&=&\frac{\lambda}{\Delta}\Bigg [I^{3-\mu,\psi}_{a^{+}}u(T)-\Bigg( \sum_{i=1}^{m}\alpha_{i}I^{1,\psi}_{a^{+}}u(\eta_{i})
+\sum_{i=1}^{m}\beta_{i}\psi^{\prime}(\eta_{i})u(\eta_{i}) \Bigg) \Bigg ]\\
&&+\frac{1}{\Delta}\Bigg [\sum_{i=1}^{m}\alpha_{i}I^{\nu,\psi}_{a^{+}}h(\eta_{i})
+\sum_{i=1}^{m}\beta_{i}\psi(\eta_{i})I^{\nu-1,\psi}_{a^{+}}h(\eta_{i})
-I^{2-\mu+\nu,\psi}_{a^{+}}h(T)+g(u(\xi))\Bigg]
\end{eqnarray*}
Substituting the values of $c_{1}$ in \eqref{eq4}, we get
\begin{eqnarray*}
u(t) & =&I_{a^{+}}^{\nu,\psi}h(t)+\lambda\frac{\psi_{a}^{\mu-1}(t)}{\Delta\Gamma(\mu)}\Bigg [I^{3-\mu,\psi}_{a^{+}}u(T)-\Bigg( \sum_{i=1}^{m}\alpha_{i}I^{1,\psi}_{a^{+}}u(\eta_{i})
+\sum_{i=1}^{m}\beta_{i}\psi^{\prime}(\eta_{i})u(\eta_{i}) \Bigg) \Bigg ]\\
&&+\frac{\psi_{a}^{\mu-1}(t)}{\Delta\Gamma(\mu)}\Bigg [\sum_{i=1}^{m}\alpha_{i}I^{\nu,\psi}_{a^{+}}h(\eta_{i})+\sum_{i=1}^{m}\beta_{i}\psi(\eta_{i})I^{\nu-1,\psi}_{a^{+}}h(\eta_{i})-I^{2-\mu+\nu,\psi}_{a^{+}}h(T)+g(u(\xi))\Bigg]\\
&&-\lambda I^{1,\psi}_{a^{+}}u(t)\\
&=&I_{a^{+}}^{\nu,\psi}h(t)+\frac{\psi_{a}^{\mu-1}(t)}{\Delta\Gamma(\mu)}\Bigg [\sum_{i=1}^{m}\alpha_{i}I^{\nu,\psi}_{a^{+}}h(\eta_{i})+\sum_{i=1}^{m}\beta_{i}\psi(\eta_{i})
I^{\nu-1,\psi}_{a^{+}}h(\eta_{i})-I^{2-\mu+\nu,\psi}_{a^{+}}h(T)\\
&&+g(u(\xi))\Bigg]+\lambda \Bigg[\frac{\psi_{a}^{\mu-1}(t)}{\Delta\Gamma(\mu)}\Bigg(I^{3-\mu,\psi}_{a^{+}}u(T)-\bigg(
\sum_{i=1}^{m}\alpha_{i}I^{1,\psi}_{a^{+}}u(\eta_{i})+\sum_{i=1}^{m}\beta_{i}\psi^{\prime}(\eta_{i})
u(\eta_{i})\bigg)\Bigg)\\
&&- I^{1,\psi}_{a^{+}}u(t)\Bigg]\\
& =&\int_{a}^{t}\mathcal{K}_{t}^{\nu}(s)h(s)ds+\frac{\psi_{a}^{\mu-1}(t)}{\Delta\Gamma(\mu)}
\Bigg[\sum_{i=1}^{m}\alpha_{i}\bigg(
\int_{a}^{\eta_{i}}\mathcal{K}_{\eta_{i}}^{\nu}(s)h(s)ds\bigg)\\
&&+\sum_{i=1}^{m}\beta_{i}\psi(\eta_{i})\bigg(
\int_{a}^{\eta_{i}}\mathcal{K}_{\eta_{i}}^{\nu-1}(s)h(s)ds\bigg)-\int_{a}^{T}\mathcal{K}_{T}^{2-\mu+\nu}(s)h(s)ds+g(u(\xi))\Bigg]\\
&&+\lambda \Bigg[\frac{\psi_{a}^{\mu-1}(t)}{\Delta\Gamma(\mu)}\Bigg(\int_{a}^{T}\mathcal{K}_{T}^{3-\nu}(s)u(s)ds      -\bigg(\sum_{i=1}^{m}\alpha_{i}\int_{a}^{\eta_{i}}\psi^{\prime}(s)u(s)ds+\sum_{i=1}^{m}\beta_{i}\psi^{\prime}(\eta_{i})
u(\eta_{i})\bigg)\Bigg)\\
&&-\int_{a}^{t}\psi^{\prime}(s)u(s)ds \Bigg]
\end{eqnarray*}
Conversely, we show that if $u$ satisfies the integral equation \eqref{eq3}, then it satisfies the problem \eqref{eq1}-\eqref{eq2}.
Applying the operator $D^{\nu,\beta,\psi}_{0^{+}}$ on both sides of the equation \eqref{eq3} and using Lemma \ref{Vanteler}, we obtain
\begin{equation*}
\begin{split}
D^{\nu,\beta,\psi}_{a^{+}}u(t)&=D^{\nu,\beta,\psi}_{a^{+}}I_{a^{+}}^{\nu,\psi}h(t)+\frac{\lambda}{\Delta\Gamma(\mu)}\Bigg [I^{3-\mu,\psi}_{a^{+}}u(T)-\Bigg( \sum_{i=1}^{m}\alpha_{i}I^{1,\psi}_{a^{+}}u(\eta_{i})+\sum_{i=1}^{m}\beta_{i}\psi^{\prime}(\eta_{i})u(\eta_{i}) \Bigg) \Bigg ]\\
&\quad \times D^{\nu,\beta,\psi}_{a^{+}}\psi_{a}^{\mu-1}(t)+\frac{1}
{\Delta\Gamma(\mu)}\Bigg[\sum_{i=1}^{m}\alpha_{i}I^{\nu,\psi}_{a^{+}}h(\eta_{i})
+\sum_{i=1}^{m}\beta_{i}\psi(\eta_{i})I^{\nu-1,\psi}_{a^{+}}h(\eta_{i})-I^{2-\mu+\nu,\psi}_{a^{+}}h(T)\\
&\quad+g(u(\xi))\Bigg]
D^{\nu,\beta,\psi}_{a^{+}}\psi_{a}^{\mu-1}(t)-\lambda D^{\nu,\beta,\psi}_{a^{+}}I_{a^{+}}^{1,\psi}u(t).
\end{split}
\end{equation*}

Since
$$ D^{\nu,\beta,\psi}_{a^{+}}\psi_{a}^{\mu-1}(t)=0,\ \text{for}\ \mu=\nu+\beta(2-\nu).$$
Then
\[\Big(D^{\nu,\beta,\psi}_{a^{+}}+\lambda D^{\nu-1,\beta,\psi}_{a^{+}}\Big)u(t)=h(t).\]
Further, it is easily to check $u(a)=0$, and
\begin{eqnarray*}
I_{a^{+}}^{2-\mu,\psi}u(T)&=&I_{a^{+}}^{2-\mu+\nu,\psi}h(T)+\frac{\psi_{a}(T)}{\Delta}\Bigg [\lambda\Bigg(I^{3-\mu,\psi}_{a^{+}}u(T)-\Big( \sum_{i=1}^{m}\alpha_{i}I^{1,\psi}_{a^{+}}u(\eta_{i})
+\sum_{i=1}^{m}\beta_{i}\psi^{\prime}(\eta_{i})u(\eta_{i}) \Big) \Bigg )\\
&&+\sum_{i=1}^{m}\alpha_{i}I^{\nu,\psi}_{a^{+}}h(\eta_{i})+\sum_{i=1}^{m}\beta_{i}\psi(\eta_{i})I^{\nu-1,\psi}_{a^{+}}h(\eta_{i})-I^{2-\mu+\nu,\psi}_{a^{+}}h(T)+g(u(\xi))\Bigg]\\
&&-\lambda I^{3-\mu,\psi}_{a^{+}}u(T).
\end{eqnarray*}

On the other hand, we have
\begin{eqnarray*}
\sum_{i=1}^{m}\alpha_{i}u(\eta_{i})&=&\sum_{i=1}^{m}\alpha_{i}I_{a^{+}}^{\nu,\psi}h(\eta_{i})
-\lambda\sum_{i=1}^{m}\alpha_{i}I_{a^{+}}^{1,\psi}u(\eta_{i})+\lambda\sum_{i=1}^{m}\alpha_{i}\frac{\psi_{a}^{\mu-1}(\eta_{i})}{\Delta\Gamma(\mu)}\\
&&\times\Bigg[I^{3-\mu,\psi}_{a^{+}}u(T)-\Big( \sum_{i=1}^{m}\alpha_{i}I^{1,\psi}_{a^{+}}u(\eta_{i})
+\sum_{i=1}^{m}\beta_{i}\psi^{\prime}(\eta_{i})u(\eta_{i})\Big)\Bigg]\\
&&+\sum_{i=1}^{m}\alpha_{i}\frac{\psi_{a}^{\mu-1}(\eta_{i})}{\Delta\Gamma(\mu)}\Bigg[\sum_{i=1}^{m}\alpha_{i}I^{\nu,\psi}_{a^{+}}h(\eta_{i})+\sum_{i=1}^{m}\beta_{i}\psi(\eta_{i})I^{\nu-1,\psi}_{a^{+}}h(\eta_{i})\\
&&-I^{2-\mu+\nu,\psi}_{a^{+}}h(T)+g(u(\xi)) \Bigg],
\end{eqnarray*}

\begin{eqnarray*}
\sum_{i=1}^{m}\beta_{i}u^{\prime}(\eta_{i})&=&\sum_{i=1}^{m}\beta_{i}\psi(\eta_{i})I_{a^{+}}^{\nu-1,\psi}h(\eta_{i})-\lambda \sum_{i=1}^{m}\beta_{i}\psi^{\prime}(\eta_{i})u(\eta_{i})+\lambda\sum_{i=1}^{m}\beta_{i}\frac{\psi_{a}^{\mu-2}(\eta_{i})}{\Delta\Gamma(\mu-1)}\\
&&\times\Bigg[I^{3-\mu,\psi}_{a^{+}}u(T)-\Big( \sum_{i=1}^{m}\alpha_{i}I^{1,\psi}_{a^{+}}u(\eta_{i})
+\sum_{i=1}^{m}\beta_{i}\psi^{\prime}(\eta_{i})u(\eta_{i})\Big) \Bigg]\\
&&+\sum_{i=1}^{m}\beta_{i}\frac{\psi_{a}^{\mu-2}(\eta_{i})}{\Delta\Gamma(\mu-1)}\Bigg[\sum_{i=1}^{m}\alpha_{i}I^{\nu,\psi}_{a^{+}}h(\eta_{i})+\sum_{i=1}^{m}\beta_{i}\psi(\eta_{i})I^{\nu-1,\psi}_{a^{+}}h(\eta_{i})\\
&&-I^{2-\mu+\nu,\psi}_{a^{+}}h(T)+g(u(\xi)) \Bigg],
\end{eqnarray*}

\begin{eqnarray*}
\sum_{i=1}^{m}\alpha_{i}u(\eta_{i})+\sum_{i=1}^{m}\beta_{i}u^{\prime}(\eta_{i})+g(u(\xi))&=&\sum_{i=1}^{m}\alpha_{i}I_{a^{+}}^{\nu,\psi}h(\eta_{i})
+\sum_{i=1}^{m}\beta_{i}\psi(\eta_{i})I_{a^{+}}^{\nu-1,\psi}h(\eta_{i})-\lambda \sum_{i=1}^{m}\alpha_{i}I_{a^{+}}^{1,\psi}u(\eta_{i})\\
&&-\lambda \sum_{i=1}^{m}\beta_{i}\psi^{\prime}(\eta_{i})u(\eta_{i})+g(u(\xi))+\lambda \frac{\psi_{a}(T)-\Delta}{\Delta}\\
&&\times\Bigg[I^{3-\mu,\psi}_{a^{+}}u(T)-\Big( \sum_{i=1}^{m}\alpha_{i}I^{1,\psi}_{a^{+}}u(\eta_{i})
+\sum_{i=1}^{m}\beta_{i}\psi^{\prime}(\eta_{i})u(\eta_{i})\Big)\Bigg]\\
&&+ \frac{\psi_{a}(T)-\Delta}{\Delta}\Bigg[\sum_{i=1}^{m}\alpha_{i}I^{\nu,\psi}_{a^{+}}h(\eta_{i})+\sum_{i=1}^{m}\beta_{i}\psi(\eta_{i})I^{\nu-1,\psi}_{a^{+}}h(\eta_{i})\\
&&-I^{2-\mu+\nu,\psi}_{a^{+}}h(T)+g(u(\xi)) \Bigg]\\
&=&I_{a^{+}}^{2-\mu+\nu,\psi}h(T)+\frac{\psi_{a}(T)}{\Delta}\Bigg [\lambda\Bigg(I^{3-\mu,\psi}_{a^{+}}u(T)-\Big( \sum_{i=1}^{m}\alpha_{i}I^{1,\psi}_{a^{+}}u(\eta_{i})\\
&&+\sum_{i=1}^{m}\beta_{i}\psi^{\prime}(\eta_{i})u(\eta_{i}) \Big) \Bigg )+\sum_{i=1}^{m}\alpha_{i}I^{\nu,\psi}_{a^{+}}h(\eta_{i})+\sum_{i=1}^{m}\beta_{i}\psi(\eta_{i})I^{\nu-1,\psi}_{a^{+}}h(\eta_{i})\\
&&-I^{2-\mu+\nu,\psi}_{a^{+}}h(T)+g(u(\xi))\Bigg]-\lambda I^{3-\mu,\psi}_{a^{+}}u(T)\\
&=&I_{a^{+}}^{2-\mu,\psi}u(T).
\end{eqnarray*}

Hence, we get \eqref{eq1}-\eqref{eq2}.
\end{proof}

\section{Existence results}
In view of Lemma \ref{lem1}, we define the operator $\mathcal{A}:\mathcal{C}(J,\mathbb{R})\rightarrow \mathcal{C}(J,\mathbb{R})$ by
\begin{eqnarray}
(Au)(t) &=&\int_{a}^{t}\mathcal{K}_{t}^{\nu}(s)F_{u}(s)ds+\frac{\psi_{a}^{\mu-1}(t)}{\Delta\Gamma(\mu)}
\Bigg[\sum_{i=1}^{m}\alpha_{i}\bigg(
\int_{a}^{\eta_{i}}\mathcal{K}_{\eta_{i}}^{\nu}(s)F_{u}(s)ds\bigg)\nonumber\\
&&+\sum_{i=1}^{m}\beta_{i}\psi(\eta_{i})\bigg(
\int_{a}^{\eta_{i}}\mathcal{K}_{\eta_{i}}^{\nu-1}(s)F_{u}(s)ds\bigg)-\int_{a}^{T}\mathcal{K}_{T}^{2-\mu+\nu}(s)
F_{u}(s)ds+g(u(\xi))\Bigg]\nonumber\\
&&+\lambda \Bigg[\frac{\psi_{a}^{\mu-1}(t)}{\Delta\Gamma(\mu)}\Bigg(\int_{a}^{T}\mathcal{K}_{T}^{3-\nu}(s)u(s)ds      -\bigg(\sum_{i=1}^{m}\alpha_{i}\int_{a}^{\eta_{i}}\psi^{\prime}(s)u(s)ds \nonumber\\
&&+\sum_{i=1}^{m}\beta_{i}\psi^{\prime}(\eta_{i})
u(\eta_{i})\bigg)\Bigg)-\int_{a}^{t}\psi^{\prime}(s)u(s)ds \Bigg] \label{eqn7}.
\end{eqnarray}

Specifically, it can be expressed as
{\color{blue}\begin{eqnarray}  \label{eqn8}
 (\mathcal{A}u)(t)&=&(\mathcal{A}_{1}u)(t)+(\mathcal{A}_{2}u)(t),\ t\in J,
\end{eqnarray}}
where
{\color{blue}\begin{eqnarray}
(\mathcal{A}_{1}u)(t)&=&I_{a^{+}}^{\nu,\psi}F_{u}(t)+\frac{\psi_{a}^{\mu-1}(t)}{\Delta\Gamma(\mu)}\Bigg [\sum_{i=1}^{m}\alpha_{i}I^{\nu,\psi}_{a^{+}}F_{u}(\eta_{i})+\sum_{i=1}^{m}\beta_{i}\psi(\eta_{i})
I^{\nu-1,\psi}_{a^{+}}F_{u}(\eta_{i})\nonumber\\
&&-I^{2-\mu+\nu,\psi}_{a^{+}}F_{u}(T) \Bigg]
+\lambda \Bigg[\frac{\psi_{a}^{\mu-1}(t)}{\Delta\Gamma(\mu)}\Bigg(I^{3-\mu,\psi}_{a^{+}}u(T)-\bigg(
\sum_{i=1}^{m}\alpha_{i}I^{1,\psi}_{a^{+}}u(\eta_{i})\nonumber\\
&&+\sum_{i=1}^{m}\beta_{i}\psi^{\prime}(\eta_{i})
u(\eta_{i})\bigg)\Bigg)- I^{1,\psi}_{a^{+}}u(t)\Bigg]\label{eqn8},\\
(\mathcal{A}_{2}u)(t)&=&\frac{\psi_{a}^{\mu-1}(t)}{\Delta\Gamma(\mu)}g(u(\xi)) \label{eqn9},
\end{eqnarray}}
\[F_{u}(t)=f(t,u(t),(\mathcal{V}u)(t), I_{a^{+}}^{2-\mu,\psi}u(t)),\  t\in J.\]
Clearly, the operator $\mathcal{A}$ has fixed points if and only if the $\psi$-Hilfer fractional boundary value problem \eqref{eq09}-\eqref{eq010} has solutions.\\

We now list suitable assumptions on the nonlinearity function $f$.
  \begin{itemize}
\item[($\mathcal{H}_{1}$)] $f\in \mathcal{C}(J \times \mathbb{R}^{3},\mathbb{R})$, and $f(t,0,0,0)\not\equiv0$ on $J$.
\item[($\mathcal{H}_{2}$)] There exist three nonnegative functions $l_{1}, l_{2}, l_{3} \in \mathcal{C}(J,[0,\infty))$ such that for all
$t\in J$ and $u_{1}, u_{2}, v_{1}, v_{2}, w_{1}, w_{2}\in \mathbb{R}$, we have
    \[|f(t,u_{1},v_{1},w_{1})-f(t,u_{2},v_{2},w_{2})|\leq l_{1}(t)|u_{1}-u_{2}|+l_{2}(t)|v_{1}-v_{2}|+l_{3}(t)|w_{1}-w_{2}|,\]
  \item[($\mathcal{H}_{3}$)]  $g$ is a continuous function satisfying $g(a)=0$ and
      \[|g(u_{1})-g(u_{2})|\leq N|u_{1}-u_{2}|,\ \forall u_{1}, u_{2}\in \mathbb{R},\ N>0.\]
\item[($\mathcal{H}_{4}$)] $\mathcal{V}:\mathcal{C}( J, \mathbb{R})\rightarrow \mathcal{C}( J, \mathbb{R})$ (not necessarily linear) satisfying
    \begin{enumerate}
         \item $\|\mathcal{V}u\|\leq \|u\|$,
      \item  $\|\mathcal{V}u-\mathcal{V}v\|\leq \|u-v\|$.
    \end{enumerate}

    \end{itemize}

For convenience, we get the following notations:
$$\Theta=\frac{\psi_{a}^{\nu}(T)}{\Gamma(\nu+1)}+\frac{\psi_{a}^{\mu-1}(T)}{|\Delta|\Gamma(\mu)
}\Bigg [\sum_{i=1}^{m}|\alpha_{i}|\frac{\psi_{a}^{\nu}(\eta_{i})}{\Gamma(\nu+1)}
+\sum_{i=1}^{m}|\beta_{i}|\psi(\eta_{i})\frac{\psi_{a}^{\nu-1}(\eta_{i})}{\Gamma(\nu)}+
\frac{\psi_{a}^{2-\mu+\nu}(T)}{\Gamma(3-\mu+\nu)} \Bigg],$$
 $$\Phi=\frac{\psi_{a}^{\mu-1}(T)}{|\Delta|\Gamma(\mu)}\Bigg(
\frac{\psi_{a}^{3-\mu}(T)}{\Gamma(4-\mu)}+\sum_{i=1}^{m}|\alpha_{i}|\psi_{a}(\eta_{i})+
\sum_{i=1}^{m}|\beta_{i}|\psi^{\prime}(\eta_{i})\Bigg)+\psi_{a}(T) ,$$
$$\Lambda(t,\mu)=l_{1}^{\star}+l_{2}^{\star}+l_{3}^{\star}\frac{\psi_{a}^{2-\mu}(t)}{\Gamma(3-\mu)},\ l_{i}^{\star}=\max_{t\in J}l_{i}(t),\ i\in \{1,2,3\},$$
 $$\Omega=\Lambda(T,\mu)\Theta+|\lambda|\Phi.$$

Our first result on the existence and uniqueness of solutions is based on the Banach contraction (Banach’s contraction mapping principle) principle.\\
\begin{theorem} \label{thmm1}
Assume that $(\rm{\mathcal{H}_{1}})$-$(\rm{\mathcal{H}_{4}})$ hold. If
 \begin{equation} \label{eq6}
\Omega+\frac{\psi_{a}^{\mu-1}(T)}{|\Delta|\Gamma(\mu)}N<1,
\end{equation}
then the $\psi$-Hilfer fractional boundary value problem \eqref{eq09}-\eqref{eq010} has a unique solution on $J$.
\end{theorem}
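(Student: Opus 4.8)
The plan is to apply the Banach contraction principle (Theorem~\ref{thm1.1}) to the operator $\mathcal{A}$ defined in~\eqref{eqn7}, regarded as a self-map of the Banach space $\mathcal{C}(J,\mathbb{R})$. By Lemma~\ref{lem1} the fixed points of $\mathcal{A}$ are exactly the solutions of \eqref{eq09}--\eqref{eq010}, so it suffices to verify that $\mathcal{A}$ maps $\mathcal{C}(J,\mathbb{R})$ into itself and that it is a contraction under \eqref{eq6}. The first point is immediate: by $(\mathcal{H}_1)$, the continuity of $g$, $\psi$, $\psi'$ and of $\mathcal{V}$, and the fact that the $\psi$-Riemann--Liouville integral of a continuous function is continuous, we get $F_u\in\mathcal{C}(J,\mathbb{R})$ and every term in \eqref{eqn7} depends continuously on $t$, so $\mathcal{A}u\in\mathcal{C}(J,\mathbb{R})$.

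For the contraction estimate I would fix $u,v\in\mathcal{C}(J,\mathbb{R})$ and bound $|(\mathcal{A}u)(t)-(\mathcal{A}v)(t)|$ term by term, using the decomposition $\mathcal{A}=\mathcal{A}_1+\mathcal{A}_2$ of \eqref{eqn8}--\eqref{eqn9}. The key ingredient is a pointwise estimate of $F_u-F_v$: combining $(\mathcal{H}_2)$, part~(2) of $(\mathcal{H}_4)$, and the bound
\[
\left|I_{a^{+}}^{2-\mu,\psi}u(s)-I_{a^{+}}^{2-\mu,\psi}v(s)\right|\le\frac{\psi_{a}^{2-\mu}(s)}{\Gamma(3-\mu)}\,\|u-v\|,
\]
which follows from Proposition~\ref{prop1.1}(i) applied to the constant function $1$, one obtains $|F_u(s)-F_v(s)|\le\Lambda(s,\mu)\|u-v\|\le\Lambda(T,\mu)\|u-v\|$, the last step because $2-\mu\in(0,1)$ and $\psi_{a}$ is increasing. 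Substituting this into every $F$-dependent term of $\mathcal{A}_1$ and evaluating the resulting integrals of constants by Proposition~\ref{prop1.1}(i) — i.e. $\int_a^{t}\mathcal{K}_{t}^{\nu}(s)\,ds=\psi_{a}^{\nu}(t)/\Gamma(\nu+1)$, $\int_a^{\eta_i}\mathcal{K}_{\eta_i}^{\nu-1}(s)\,ds=\psi_{a}^{\nu-1}(\eta_i)/\Gamma(\nu)$, $\int_a^{T}\mathcal{K}_{T}^{2-\mu+\nu}(s)\,ds=\psi_{a}^{2-\mu+\nu}(T)/\Gamma(3-\mu+\nu)$ — and bounding the prefactor $\psi_{a}^{\mu-1}(t)/(|\Delta|\Gamma(\mu))$ by its value at $t=T$, the $F$-block of $\|\mathcal{A}_1u-\mathcal{A}_1v\|$ is dominated by $\Lambda(T,\mu)\,\Theta\,\|u-v\|$. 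The remaining terms of $\mathcal{A}_1$, carrying the factor $\lambda$, are linear in $u$; using $|I_{a^{+}}^{1,\psi}(u-v)(t)|\le\psi_{a}(T)\|u-v\|$, $|I_{a^{+}}^{3-\mu,\psi}(u-v)(T)|\le\psi_{a}^{3-\mu}(T)\|u-v\|/\Gamma(4-\mu)$, $|I_{a^{+}}^{1,\psi}(u-v)(\eta_i)|\le\psi_{a}(\eta_i)\|u-v\|$ and $|\psi'(\eta_i)(u-v)(\eta_i)|\le\psi'(\eta_i)\|u-v\|$, this block is bounded by $|\lambda|\,\Phi\,\|u-v\|$. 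Finally $(\mathcal{H}_3)$ gives $|(\mathcal{A}_2u)(t)-(\mathcal{A}_2v)(t)|\le\frac{\psi_{a}^{\mu-1}(T)}{|\Delta|\Gamma(\mu)}N\,\|u-v\|$.

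Adding the three contributions and taking the supremum over $t\in J$ yields
\[
\|\mathcal{A}u-\mathcal{A}v\|\le\left(\Lambda(T,\mu)\Theta+|\lambda|\Phi+\frac{\psi_{a}^{\mu-1}(T)}{|\Delta|\Gamma(\mu)}N\right)\|u-v\|=\left(\Omega+\frac{\psi_{a}^{\mu-1}(T)}{|\Delta|\Gamma(\mu)}N\right)\|u-v\|,
\]
so condition \eqref{eq6} makes $\mathcal{A}$ a contraction on $\mathcal{C}(J,\mathbb{R})$, and Theorem~\ref{thm1.1} provides a unique fixed point, i.e. a unique solution of \eqref{eq09}--\eqref{eq010} on $J$. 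I do not expect any conceptual difficulty here; the one laborious step is the careful term-by-term bookkeeping that matches each summand of \eqref{eqn7} with the corresponding summand of $\Theta$ and $\Phi$, which rests entirely on the constant $\psi$-integral identities of Proposition~\ref{prop1.1}(i).
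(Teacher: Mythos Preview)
Your proof is correct and follows essentially the same route as the paper: both apply the Banach contraction principle to the operator $\mathcal{A}$ on $\mathcal{C}(J,\mathbb{R})$, deriving the identical Lipschitz constant $\Omega+\frac{\psi_{a}^{\mu-1}(T)}{|\Delta|\Gamma(\mu)}N$ via the same term-by-term estimates on $F_u-F_v$, the $\lambda$-block, and the $g$-block. The only difference is that the paper first fixes a radius $r$ and verifies $\mathcal{A}(\mathcal{B}_r)\subset\mathcal{B}_r$ before the contraction estimate; since the contraction is then established on all of $\mathcal{C}(J,\mathbb{R})$ anyway, that invariance step is redundant for Theorem~\ref{thm1.1}, and you are right to omit it.
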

\begin{proof}
Setting $\sup\big\{|f(t,0,0,0)|,\ t\in J \big\}=L<\infty$, and choose a constant $r>0$ with
$$r\geq\frac{L\Theta+\frac{\psi_{a}^{\mu-1}(T)}{|\Delta|\Gamma(\mu)}Na}{1-\Big(\Omega+\frac{\psi_{a}^{\mu-1}(T)}{|\Delta|\Gamma(\mu)}N\Big)}.$$
Define a bounded, closed and convex set
$$\mathcal{B}_{r}=\big\{u\in\mathcal{C}(J,\mathbb{R}):\|u\|\leq r\big\}.$$
Next, we prove that $\mathcal{A}$ is a contraction map on $\mathcal{C}(J,\mathbb{R})$ with respect to the norm $\|.\|$.
First, we show that $\mathcal{A}(\mathcal{B}_{r})\subset \mathcal{B}_{r}$.
By Lemmas \ref{lem1.3}-\ref{lem1.4}, for any $u\in \mathcal{B}_{r}$ and $t\in [0,1]$, using $\rm (\mathcal{H}_{2})$, we have
\begin{eqnarray*}
|F_{u}(t)|&=&|f(t,u(t),(\mathcal{V}u)(t), I_{a^{+}}^{2-\mu,\psi}u(t))|\\
&\leq& |f(t,u(t),(\mathcal{V}u)(t), I_{a^{+}}^{2-\mu,\psi}u(t))-f(t,0,0,0)|+|f(t,0,0,0)|\\
&\leq& l_{1}(t)|u(t)|+l_{2}(t)|(\mathcal{V}u)(t)|+l_{3}(t) I_{a^{+}}^{2-\mu,\psi}|u(t)|+L\\
&\leq& l_{1}^{\star}\|u\|+l_{2}^{\star}\|(\mathcal{V}u)(t)\|+l_{3}^{\star} \frac{\psi_{a}^{2-\mu}(t)}{\Gamma(3-\mu)}\|u\|+L\\
&\leq& \Lambda(t,\mu)\|u\|+L
\end{eqnarray*}

 \begin{eqnarray*}
 \big|(\mathcal{A}u)(t)\big|&\leq& |(\mathcal{A}_{1}u)(t)|+|(\mathcal{A}_{2}u)(t)|\\
  &\leq& I_{a^{+}}^{\nu,\psi}|F_{u}(t)|+\frac{\psi_{a}^{\mu-1}(t)}{|\Delta|\Gamma(\mu)}\Bigg [\sum_{i=1}^{m}|\alpha_{i}|I^{\nu,\psi}_{a^{+}}|F_{u}(\eta_{i})|+\sum_{i=1}^{m}|\beta_{i}|\psi(\eta_{i})
I^{\nu-1,\psi}_{a^{+}}|F_{u}(\eta_{i})|+I^{2-\mu+\nu,\psi}_{a^{+}}|F_{u}(T)| \Bigg]\nonumber\\
&&+|\lambda| \Bigg[\frac{\psi_{a}^{\mu-1}(t)}{\Delta\Gamma(\mu)}\Bigg(I^{3-\mu,\psi}_{a^{+}}|u(T)|+
\sum_{i=1}^{m}|\alpha_{i}|I^{1,\psi}_{a^{+}}|u(\eta_{i})|+\sum_{i=1}^{m}|\beta_{i}|\psi^{\prime}(\eta_{i})
|u(\eta_{i})|\Bigg)+ I^{1,\psi}_{a^{+}}|u(t)|\Bigg]\\
&&+\frac{\psi_{a}^{\mu-1}(t)}{|\Delta|\Gamma(\mu)}|g(u(\xi))|\\
&\leq& \big(\Lambda(T,\mu)\|u\|+L\big)\Bigg\{ \int_{a}^{t}\frac{\psi^{\prime}(s)(\psi(t)-\psi(s))^{\nu-1}}{\Gamma(\nu)}ds
+\frac{\psi_{a}^{\mu-1}(t)}{|\Delta|\Gamma(\mu)}\\
&&\times\Bigg[\sum_{i=1}^{m}|\alpha_{i}|\int_{a}^{\eta_{i}}\frac{\psi^{\prime}(s)
(\psi(\eta_{i})-\psi(s))^{\nu-1}}{\Gamma(\nu)}ds+\sum_{i=1}^{m}|\beta_{i}|\psi(\eta_{i})\int_{a}^{\eta_{i}}\frac{\psi^{\prime}(s)
(\psi(\eta_{i})-\psi(s))^{\nu-2}}{\Gamma(\nu-1)}ds\\
&&+\int_{a}^{T}\frac{\psi^{\prime}(s)
(\psi(T)-\psi(s))^{1-\mu+\nu}}{\Gamma(2-\mu+\nu)}ds\Bigg]\Bigg\}\\
&&+|\lambda|    \|u\|\Bigg[\frac{\psi_{a}^{\mu-1}(t)}{|\Delta|\Gamma(\mu)}\Bigg(\int_{a}^{T}\frac{\psi^{\prime}(s)(\psi(T)-\psi(s))^{2-\mu}}
{\Gamma(3-\mu)}ds+
\sum_{i=1}^{m}|\alpha_{i}|\int_{a}^{\eta_{i}}\psi^{\prime}(s)ds+\sum_{i=1}^{m}|\beta_{i}|\psi^{\prime}(\eta_{i})\Bigg)\\
&&+\int_{a}^{t}\psi^{\prime}(s)ds\bigg)\Bigg]+N\frac{\psi_{a}^{\mu-1}(t)}{|\Delta|\Gamma(\mu)}\big(\|u\|+a\big)\nonumber\\
&\leq& \big(\Lambda(T,\mu)\|u\|+L\big)\Bigg\{\frac{\psi_{a}^{\nu}(T)}{\Gamma(\nu+1)}+\frac{\psi_{a}^{\mu-1}(T)}{|\Delta|\Gamma(\mu)
}\Bigg [\sum_{i=1}^{m}|\alpha_{i}|\frac{\psi_{a}^{\nu}(\eta_{i})}{\Gamma(\nu+1)}
+\sum_{i=1}^{m}|\beta_{i}|\psi(\eta_{i})\frac{\psi_{a}^{\nu-1}(\eta_{i})}{\Gamma(\nu)}\\
&&+\frac{\psi_{a}^{2-\mu+\nu}(T)}{\Gamma(3-\mu+\nu)} \Bigg]\Bigg\}
+|\lambda|  \|u\|\Bigg[\frac{\psi_{a}^{\mu-1}(T)}{|\Delta|\Gamma(\mu)}\Bigg(
\frac{\psi_{a}^{3-\mu}(T)}{\Gamma(4-\mu)}+\sum_{i=1}^{m}|\alpha_{i}|\psi_{a}(\eta_{i})+
\sum_{i=1}^{m}|\beta_{i}|\psi^{\prime}(\eta_{i})\Bigg)\\
&&+\psi_{a}(T) \Bigg]+N\frac{\psi_{a}^{\mu-1}(T)}{|\Delta|\Gamma(\mu)}\big(\|u\|+a\big)\\
&\leq& \Omega r+L\Theta+N\frac{\psi_{a}^{\mu-1}(T)}{|\Delta|\Gamma(\mu)}(r+a)\leq r,
  \end{eqnarray*}
In view of \eqref{eq6}, we get $\|\mathcal{A}u\|\leq r$.  Then $\mathcal{A}$ maps $\mathcal{B}_{r}$ into itself.
From $\rm (\mathcal{H}_{2})$, it follows that
 \begin{eqnarray*}
 \big|F_{u}(t)-F_{v}(t)\big|&\leq& l_{1}(t)|u(t)-v(t)|+l_{2}(t)|(\mathcal{V}u)(t)-(\mathcal{V}u)(t)|+l_{3}(t)I_{a^{+}}^{2-\mu,\psi}|u(t)-v(t)|\\
 &\leq& l_{1}^{\star}\|u-v\|+l_{2}^{\star}\|u-v\|+l_{3}^{\star} \frac{\psi_{a}^{2-\mu}(t)}{\Gamma(3-\mu)} \|u-v\|\\
 &\leq& \Lambda(t,\mu) \|u-v\|.
 \end{eqnarray*}

 Then, for all $u,v \in\mathcal{C}(J,\mathbb{R})$ and for each $t\in J$, using $\rm (\mathcal{H}_{2})$, we have
 \begin{eqnarray*}
 \big|(\mathcal{A}u)(t)-(\mathcal{A}v)(t)|&\leq&|(\mathcal{A}_{1}u)(t)|+|(\mathcal{A}_{2}u)(t)|\\
  &\leq& I_{a^{+}}^{\nu,\psi}|F_{u}(t)-F_{v}(t)|+\frac{\psi_{a}^{\mu-1}(t)}{|\Delta|\Gamma(\mu)}\Bigg [\sum_{i=1}^{m}|\alpha_{i}|I^{\nu,\psi}_{a^{+}}|F_{u}(\eta_{i})-F_{v}(\eta_{i})|\\
  &&+\sum_{i=1}^{m}|\beta_{i}|\psi(\eta_{i})
I^{\nu-1,\psi}_{a^{+}}|F_{u}(\eta_{i})-F_{v}(\eta_{i})|+I^{2-\mu+\nu,\psi}_{a^{+}}|F_{u}(T)-F_{v}(T)| \Bigg]\nonumber\\
&&+|\lambda| \Bigg[\frac{\psi_{a}^{\mu-1}(t)}{\Delta\Gamma(\mu)}\Bigg(I^{3-\mu,\psi}_{a^{+}}|u(T)-v(T)|+
\sum_{i=1}^{m}|\alpha_{i}|I^{1,\psi}_{a^{+}}|u(\eta_{i})-v(\eta_{i})|\\
&&+\sum_{i=1}^{m}|\beta_{i}|\psi^{\prime}(\eta_{i})
|u(\eta_{i})-v(\eta_{i})|\bigg)+ I^{1,\psi}_{a^{+}}|u(t)-v(t)|\Bigg]\\
&&+\frac{\psi_{a}^{\mu-1}(t)}{|\Delta|\Gamma(\mu)}|g(u(\xi))-g(v(\xi))|\\
&\leq& \Bigg\{\Lambda(T,\mu)\Bigg(\frac{\psi_{a}^{\nu}(T)}{\Gamma(\nu+1)}+\frac{\psi_{a}^{\mu-1}(T)}{|\Delta|\Gamma(\mu)
}\Bigg [\sum_{i=1}^{m}|\alpha_{i}|\frac{\psi_{a}^{\nu}(\eta_{i})}{\Gamma(\nu+1)}
+\sum_{i=1}^{m}|\beta_{i}|\psi(\eta_{i})\frac{\psi_{a}^{\nu-1}(\eta_{i})}{\Gamma(\nu)}\\
&&+\frac{\psi_{a}^{2-\mu+\nu}(T)}{\Gamma(3-\mu+\nu)} \Bigg]\Bigg)
+|\lambda| \Bigg[\frac{\psi_{a}^{\mu-1}(T)}{|\Delta|\Gamma(\mu)}\Bigg(
\frac{\psi_{a}^{3-\mu}(T)}{\Gamma(4-\mu)}+\sum_{i=1}^{m}|\alpha_{i}|\psi_{a}(\eta_{i})+
\sum_{i=1}^{m}|\beta_{i}|\psi^{\prime}(\eta_{i})\Bigg)\\
&&+\psi_{a}(T) \Bigg]\Bigg\}\|u-v\|+N\frac{\psi_{a}^{\mu-1}(T)}{|\Delta|\Gamma(\mu)}\|u-v\|\\
&\leq& \Bigg(\Omega +N\frac{\psi_{a}^{\mu-1}(T)}{|\Delta|\Gamma(\mu)}\Bigg)\|u-v\|
   \end{eqnarray*}
   Thus we obtain
  $$ \|\mathcal{ A}u-\mathcal{A}v\|\leq \Bigg(\Omega +N\frac{\psi_{a}^{\mu-1}(T)}{|\Delta|\Gamma(\mu)}\Bigg) \|u-v\|.$$

From \eqref{eq6}, it follows that $\mathcal{A}$ is a contraction map. Therefore, by the Banach contraction principle, $\mathcal{A}$ has a unique fixed point which is the unique solution of the problem \eqref{eq09}-\eqref{eq010} in $\mathcal{C}(J,\mathbb{R})$.
\end{proof}

The existence result is based on Sadovskii's fixed point theorem.

\begin{theorem}\label{thmm2}
Assume that $(\rm{\mathcal{H}_{1}})$, $(\rm{\mathcal{H}_{3}})$ and $(\rm{\mathcal{H}_{4}})$ hold. In addition, we assume that
\begin{itemize}
\item[($\mathcal{H}_{5}$)] There exist nonnegative functions $p_{i} \in C(J, \mathbb{R})$ and nondecreasing functions $\varphi_{i} :\mathbb{R}^{+}\rightarrow \mathbb{R}^{+},\ i =1,2,3,$ such that
    \[\big|f(t,x_{1},x_{2},x_{3})\big|\leq \sum_{i=1}^{3}p_{i}(t)\varphi_{i}(|x_{i}|),\ \text{for all}\ (t,x_{1},x_{2},x_{3})\in J\times \mathbb{R}\times\mathbb{R}\times\mathbb{R},\] with $p_{i}^{\star}=\max_{t\in J}|p_{i}(t)|,\ i=1,2,3.$
   \item[($\mathcal{H}_{6}$)]  $\Xi:=|\lambda|\Phi +N\frac{\psi_{a}^{\mu-1}(T)}{|\Delta|\Gamma(\mu)}<1.$
  \item[($\mathcal{H}_{7}$)] There exists a number $r_{0}>0$ such that \[ r_{0}> \frac{\Big(\sum_{i=1}^{2}p_{i}^{\star}\varphi_{i}(r_{0})+p_{3}^{\star}\varphi_{3}\Big(\frac{\psi_{a}^{2-\mu}(T)}{\Gamma(3-\mu)}r_{0}\Big) \Big)\Theta+\frac{\psi_{a}^{\mu-1}(T)}{|\Delta|\Gamma(\mu)}Na}{1-\Xi}.\]
\end{itemize}
Then the $\psi$-Hilfer fractional boundary value problem \eqref{eq09}-\eqref{eq010} has at least one solution on $J$.
\end{theorem}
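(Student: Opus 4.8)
The plan is to apply Sadovskii's fixed point theorem (Theorem~\ref{thm1.2}) on the set $\mathcal{B}_{r_0}=\{u\in\mathcal{C}(J,\mathbb{R}):\|u\|\le r_0\}$, which is convex, bounded and closed, after writing $\mathcal{A}$ as a contraction plus a compact operator so that Lemma~\ref{lem1.3} makes $\mathcal{A}$ condensing on $\mathcal{B}_{r_0}$. To this end I would decompose $\mathcal{A}=\mathcal{P}+\mathcal{Q}$, where $\mathcal{Q}$ collects precisely the terms of $\mathcal{A}_{1}$ built from $F_{u}$,
\[
(\mathcal{Q}u)(t)=I_{a^{+}}^{\nu,\psi}F_{u}(t)+\frac{\psi_{a}^{\mu-1}(t)}{\Delta\Gamma(\mu)}\Bigl[\sum_{i=1}^{m}\alpha_{i}I^{\nu,\psi}_{a^{+}}F_{u}(\eta_{i})+\sum_{i=1}^{m}\beta_{i}\psi(\eta_{i})I^{\nu-1,\psi}_{a^{+}}F_{u}(\eta_{i})-I^{2-\mu+\nu,\psi}_{a^{+}}F_{u}(T)\Bigr],
\]
and $\mathcal{P}u=\mathcal{A}u-\mathcal{Q}u$ gathers the $\lambda$-terms of $\mathcal{A}_{1}$ (those involving $u$) together with $(\mathcal{A}_{2}u)(t)=\psi_{a}^{\mu-1}(t)g(u(\xi))/(\Delta\Gamma(\mu))$.

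The first step is to verify $\mathcal{A}(\mathcal{B}_{r_0})\subseteq\mathcal{B}_{r_0}$. For $u\in\mathcal{B}_{r_0}$, $(\mathcal{H}_{5})$ together with $I_{a^{+}}^{2-\mu,\psi}|u(t)|\le\psi_{a}^{2-\mu}(T)\|u\|/\Gamma(3-\mu)$ gives $|F_{u}(t)|\le M_{r_0}:=\sum_{i=1}^{2}p_{i}^{\star}\varphi_{i}(r_0)+p_{3}^{\star}\varphi_{3}\bigl(\psi_{a}^{2-\mu}(T)r_0/\Gamma(3-\mu)\bigr)$; feeding this into the estimates already displayed in the proof of Theorem~\ref{thmm1} — the $\Theta$-bound for the $F_{u}$-terms, the $|\lambda|\Phi$-bound for the $\lambda$-terms, and (using $g(a)=0$ and $(\mathcal{H}_{3})$) the $N\psi_{a}^{\mu-1}(T)(\|u\|+a)/(|\Delta|\Gamma(\mu))$-bound for $\mathcal{A}_{2}$ — one obtains $\|\mathcal{A}u\|\le M_{r_0}\Theta+|\lambda|\Phi\,r_0+N\psi_{a}^{\mu-1}(T)(r_0+a)/(|\Delta|\Gamma(\mu))$, which is $\le r_0$ exactly by $(\mathcal{H}_{7})$. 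In the same way, but retaining only the $\lambda$- and $g$-contributions and using $(\mathcal{H}_{3})$, one gets $\|\mathcal{P}u-\mathcal{P}v\|\le\bigl(|\lambda|\Phi+N\psi_{a}^{\mu-1}(T)/(|\Delta|\Gamma(\mu))\bigr)\|u-v\|=\Xi\|u-v\|$, so $\mathcal{P}$ is a contraction on $\mathcal{C}(J,\mathbb{R})$ by $(\mathcal{H}_{6})$.

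The core of the argument is to show that $\mathcal{Q}$ is compact. For continuity: if $u_{n}\to u$ in $\mathcal{C}(J,\mathbb{R})$, then by the second condition in $(\mathcal{H}_{4})$ one has $\mathcal{V}u_{n}\to\mathcal{V}u$ uniformly, and $\sup_{t\in J}|I_{a^{+}}^{2-\mu,\psi}u_{n}(t)-I_{a^{+}}^{2-\mu,\psi}u(t)|\le\psi_{a}^{2-\mu}(T)\|u_{n}-u\|/\Gamma(3-\mu)\to0$; uniform continuity of $f$ on compact subsets of $J\times\mathbb{R}^{3}$ then gives $F_{u_{n}}\to F_{u}$ uniformly on $J$, and since all kernels appearing in $\mathcal{Q}$ are integrable, $\mathcal{Q}u_{n}\to\mathcal{Q}u$. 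Uniform boundedness of $\mathcal{Q}(\mathcal{B}_{r_0})$ follows from the $M_{r_0}$-estimate. For equicontinuity, note that in $(\mathcal{Q}u)(t)$ only $I_{a^{+}}^{\nu,\psi}F_{u}(t)$ and $\psi_{a}^{\mu-1}(t)$ depend on $t$, and since $\nu>1$ the kernel $\mathcal{K}_{t}^{\nu}$ is non-singular; splitting $\int_{a}^{t_{2}}-\int_{a}^{t_{1}}$ in the standard way, $|(\mathcal{Q}u)(t_{2})-(\mathcal{Q}u)(t_{1})|$ is bounded by a continuous function of $(t_{1},t_{2})$ that vanishes when $t_{2}\to t_{1}$ and whose coefficients depend only on $M_{r_0}$ and the constant bracket, hence is independent of $u\in\mathcal{B}_{r_0}$. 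The Arzel\`{a}--Ascoli theorem then yields that $\mathcal{Q}(\mathcal{B}_{r_0})$ is relatively compact.

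Finally, Lemma~\ref{lem1.3} applied with the roles of $\mathcal{K}$ and $\mathcal{C}$ played by $\mathcal{P}$ and $\mathcal{Q}$ and with $k=\Xi<1$ shows that $\mathcal{A}=\mathcal{P}+\mathcal{Q}$ is a $\Xi$-set contraction, hence condensing, on $\mathcal{B}_{r_0}$; combined with $\mathcal{A}(\mathcal{B}_{r_0})\subseteq\mathcal{B}_{r_0}$ and the fact that $\mathcal{B}_{r_0}$ is convex, bounded and closed, Theorem~\ref{thm1.2} produces a fixed point $u\in\mathcal{B}_{r_0}$ of $\mathcal{A}$, which by Lemma~\ref{lem1} is a solution of \eqref{eq09}-\eqref{eq010}. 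I expect the main obstacle to be establishing the complete continuity of $\mathcal{Q}$ rigorously — in particular making the convergence $F_{u_{n}}\to F_{u}$ uniform (which rests on $(\mathcal{H}_{4})$ controlling $\mathcal{V}u_{n}-\mathcal{V}u$ and on uniform continuity of $f$) and making the equicontinuity estimate uniform over the whole ball $\mathcal{B}_{r_0}$; the self-map property and the contractivity of $\mathcal{P}$ only recycle the bounds obtained in Theorem~\ref{thmm1}.
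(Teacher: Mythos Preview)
Your proof is correct and uses the same overall scheme as the paper (Sadovskii via Lemma~\ref{lem1.3}), but you split $\mathcal{A}$ differently. The paper keeps the decomposition $\mathcal{A}=\mathcal{A}_{1}+\mathcal{A}_{2}$ of \eqref{eqn8}--\eqref{eqn9}: it shows that $\mathcal{A}_{1}$ (which contains both the $F_{u}$-terms \emph{and} the $\lambda$-terms in $u$) is compact on $\mathcal{B}_{r_{0}}$, and that $\mathcal{A}_{2}$ (only the $g$-term) is $k$-contractive with $k=N\psi_{a}^{\mu-1}(T)/(|\Delta|\Gamma(\mu))\le\Xi<1$. You instead move the $\lambda$-terms over to the contractive piece, obtaining a contraction $\mathcal{P}$ with constant exactly $\Xi$, and a smaller compact piece $\mathcal{Q}$ containing only the $F_{u}$-integrals. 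Your choice makes the role of hypothesis $(\mathcal{H}_{6})$ completely transparent (it is precisely the contraction condition for $\mathcal{P}$) and shortens the equicontinuity check, since $\mathcal{Q}$ depends on $t$ only through $I_{a^{+}}^{\nu,\psi}F_{u}(t)$ and $\psi_{a}^{\mu-1}(t)$. The paper's choice, on the other hand, yields a smaller contraction constant for the $k$-set contraction part but forces the equicontinuity estimate to also absorb the $\lambda$-terms; this still works because those terms are integral/point-evaluation operators with continuous $t$-dependence. Either decomposition leads to the same conclusion, and the self-map and bound computations are identical.
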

\begin{proof}
Let us consider the operators $\mathcal{A}_{1}$, $\mathcal{A}_{2}$ defined by \eqref{eqn8} and \eqref{eqn9} respectively. The proof is divided into several steps:\\
\textbf{Step 1.} Let $\mathcal{B}_{r_{0}}=\big\{u\in\mathcal{C}(J,\mathbb{R}):\|u\|\leq r_{0}\big\}$ be a closed bounded and convex subset of $\mathcal{C}(J,\mathbb{R})$, where $r_{0}$ is defined by assumption $(\rm{\mathcal{H}_{7}})$. We first show that  $\mathcal{A}\big(\mathcal{B}_{r_{0}}\big)\subset \mathcal{B}_{r_{0}}$.\\
For any $u\in \mathcal{B}_{r_{0}}$, $t\in J$, by $(\rm{\mathcal{H}_{5}})$-$(\rm{\mathcal{H}_{7}})$, we get

\begin{eqnarray*}
 \big|(\mathcal{A}u)(t)\big|&\leq& |(\mathcal{A}_{1}u)(t)|+|(\mathcal{A}_{2}u)(t)|\\
  &\leq& I_{a^{+}}^{\nu,\psi}|F_{u}(t)|+\frac{\psi_{a}^{\mu-1}(t)}{|\Delta|\Gamma(\mu)}\Bigg [\sum_{i=1}^{m}|\alpha_{i}|I^{\nu,\psi}_{a^{+}}|F_{u}(\eta_{i})|+\sum_{i=1}^{m}|\beta_{i}|\psi(\eta_{i})
I^{\nu-1,\psi}_{a^{+}}|F_{u}(\eta_{i})|+I^{2-\mu+\nu,\psi}_{a^{+}}|F_{u}(T)| \Bigg]\nonumber\\
&&+|\lambda| \Bigg[\frac{\psi_{a}^{\mu-1}(t)}{|\Delta|\Gamma(\mu)}\Bigg(I^{3-\mu,\psi}_{a^{+}}|u(T)|+
\sum_{i=1}^{m}|\alpha_{i}|I^{1,\psi}_{a^{+}}|u(\eta_{i})|+\sum_{i=1}^{m}|\beta_{i}|\psi^{\prime}(\eta_{i})
|u(\eta_{i})|\Bigg)+ I^{1,\psi}_{a^{+}}|u(t)|\Bigg]\\
&&+\frac{\psi_{a}^{\mu-1}(t)}{|\Delta|\Gamma(\mu)}|g(u(\xi))|\\
&\leq& \Bigg(\sum_{i=1}^{2}p_{i}^{\star}\varphi_{i}(\|u\|)+p_{3}^{\star}\varphi_{3}\bigg(\frac{\psi_{a}^{2-\mu}(T)}{\Gamma(3-\mu)}\|u\|\bigg) \Bigg)\Bigg\{ \int_{a}^{t}\frac{\psi^{\prime}(s)(\psi(t)-\psi(s))^{\nu-1}}{\Gamma(\nu)}ds
+\frac{\psi_{a}^{\mu-1}(t)}{|\Delta|\Gamma(\mu)}\\
&&\times\Bigg[\sum_{i=1}^{m}|\alpha_{i}|\int_{a}^{\eta_{i}}\frac{\psi^{\prime}(s)
(\psi(\eta_{i})-\psi(s))^{\nu-1}}{\Gamma(\nu)}ds+\sum_{i=1}^{m}|\beta_{i}|\psi(\eta_{i})\int_{a}^{\eta_{i}}\frac{\psi^{\prime}(s)
(\psi(\eta_{i})-\psi(s))^{\nu-2}}{\Gamma(\nu-1)}ds\\
&&+\int_{a}^{T}\frac{\psi^{\prime}(s)
(\psi(T)-\psi(s))^{1-\mu+\nu}}{\Gamma(2-\mu+\nu)}ds\Bigg]\Bigg\}\\
&&+|\lambda|    \|u\|\Bigg[\frac{\psi_{a}^{\mu-1}(t)}{|\Delta|\Gamma(\mu)}\Bigg(\int_{a}^{T}\frac{\psi^{\prime}(s)(\psi(T)-\psi(s))^{2-\mu}}
{\Gamma(3-\mu)}ds+
\sum_{i=1}^{m}|\alpha_{i}|\int_{a}^{\eta_{i}}\psi^{\prime}(s)ds+\sum_{i=1}^{m}|\beta_{i}|\psi^{\prime}(\eta_{i})\Bigg)\\
&&+\int_{a}^{t}\psi^{\prime}(s)ds\bigg)\Bigg]+N\frac{\psi_{a}^{\mu-1}(t)}{|\Delta|\Gamma(\mu)}\big(\|u\|+a\big)\nonumber\\
&\leq& \Bigg(\sum_{i=1}^{2}p_{i}^{\star}\varphi_{i}(r_{0})+p_{3}^{\star}\varphi_{3}\bigg(\frac{\psi_{a}^{2-\mu}(T)}{\Gamma(3-\mu)}r_{0}\bigg) \Bigg)\Bigg\{\frac{\psi_{a}^{\nu}(T)}{\Gamma(\nu+1)}+\frac{\psi_{a}^{\mu-1}(T)}{|\Delta|\Gamma(\mu)
}\Bigg [\sum_{i=1}^{m}|\alpha_{i}|\frac{\psi_{a}^{\nu}(\eta_{i})}{\Gamma(\nu+1)}\\
&&+\sum_{i=1}^{m}|\beta_{i}|\psi(\eta_{i})\frac{\psi_{a}^{\nu-1}(\eta_{i})}{\Gamma(\nu)}
+\frac{\psi_{a}^{2-\mu+\nu}(T)}{\Gamma(3-\mu+\nu)} \Bigg]\Bigg\}
+|\lambda|  r_{0}\Bigg[\frac{\psi_{a}^{\mu-1}(T)}{|\Delta|\Gamma(\mu)}\Bigg(
\frac{\psi_{a}^{3-\mu}(T)}{\Gamma(4-\mu)}+\sum_{i=1}^{m}|\alpha_{i}|\psi_{a}(\eta_{i})\\
&&+\sum_{i=1}^{m}|\beta_{i}|\psi^{\prime}(\eta_{i})\Bigg)
+\psi_{a}(T) \Bigg]+N\frac{\psi_{a}^{\mu-1}(T)}{|\Delta|\Gamma(\mu)}\big(r_{0}+a\big)\\
&\leq&  \Bigg(\sum_{i=1}^{2}p_{i}^{\star}\varphi_{i}(r_{0})+p_{3}^{\star}\varphi_{3}\bigg(\frac{\psi_{a}^{2-\mu}(T)}{\Gamma(3-\mu)}r_{0}\bigg) \Bigg)\Theta+\Xi r_{0}+\frac{\psi_{a}^{\mu-1}(T)}{|\Delta|\Gamma(\mu)}Na<r_{0},
  \end{eqnarray*}
which implies that $\mathcal{A}\big(\mathcal{B}_{r_{0}}\big)\subset \mathcal{B}_{r_{0}}$.\\
\textbf{Step 2.} $\mathcal{A}_{1}$ is compact.
In view of Step 1, the operator $\mathcal{A}_{1}$ is uniformly bounded. Now we show that $\mathcal{A}_{1}$ maps
bounded sets into equicontinuous sets of $\mathcal{C}(J,\mathbb{R})$. Let $t_{1}, t_{2}\in J$ with $t_{1}< t_{2}$ and
$u\in \mathcal{B}_{r_{0}}$. Then we have

\begin{eqnarray*}
\big|(\mathcal{A}_{1}u)(t_{2})|-|(\mathcal{A}_{1}u)(t_{1})\big|
  &\leq&  \Big|I_{a^{+}}^{\nu,\psi}F_{u}(t_{2})-I_{a^{+}}^{\nu,\psi}F_{u}(t_{1})\Big|+\frac{\psi_{a}^{\mu-1}(t_{2})-\psi_{a}^{\mu-1}(t_{1})}{|\Delta|\Gamma(\mu)}\Bigg [\sum_{i=1}^{m}|\alpha_{i}|I^{\nu,\psi}_{a^{+}}|F_{u}(\eta_{i})|\\
  &&+\sum_{i=1}^{m}|\beta_{i}|\psi(\eta_{i})
I^{\nu-1,\psi}_{a^{+}}|F_{u}(\eta_{i})|+I^{2-\mu+\nu,\psi}_{a^{+}}|F_{u}(T)| \Bigg]\nonumber\\
&&+|\lambda| \Bigg[\frac{\psi_{a}^{\mu-1}(t_{2})-\psi_{a}^{\mu-1}(t_{1})}{|\Delta|\Gamma(\mu)}\Bigg(I^{3-\mu,\psi}_{a^{+}}|u(T)|+
\sum_{i=1}^{m}|\alpha_{i}|I^{1,\psi}_{a^{+}}|u(\eta_{i})|\\
&&+\sum_{i=1}^{m}|\beta_{i}|\psi^{\prime}(\eta_{i})
|u(\eta_{i})|\Bigg)+ \big|I^{1,\psi}_{a^{+}}|u(t_{2})-I^{1,\psi}_{a^{+}}|u(t_{1})|\Bigg]\\
&\leq&
\Bigg(\sum_{i=1}^{2}p_{i}^{\star}\varphi_{i}(r_{0})+p_{3}^{\star}\varphi_{3}\bigg(\frac{\psi_{a}^{2-\mu}(T)}{\Gamma(3-\mu)}r_{0}\bigg) \Bigg)\Bigg\{\frac{\psi_{a}^{\nu}(t_{2})-\psi_{a}^{\nu}(t_{1})}{\Gamma(\nu+1)}+\frac{\psi_{a}^{\mu-1}(t_{2})-\psi_{a}^{\mu-1}(t_{1})}{|\Delta|\Gamma(\mu)
}\\
&&\times\Bigg [\sum_{i=1}^{m}|\alpha_{i}|\frac{\psi_{a}^{\nu}(\eta_{i})}{\Gamma(\nu+1)}
+\sum_{i=1}^{m}|\beta_{i}|\psi(\eta_{i})\frac{\psi_{a}^{\nu-1}(\eta_{i})}{\Gamma(\nu)}
+\frac{\psi_{a}^{2-\mu+\nu}(T)}{\Gamma(3-\mu+\nu)} \Bigg]\Bigg\}\\
&&+|\lambda|  r_{0}\Bigg[\frac{\psi_{a}^{\mu-1}(t_{2})-\psi_{a}^{\mu-1}(t_{1})}{|\Delta|\Gamma(\mu)}\Bigg(
\frac{\psi_{a}^{3-\mu}(T)}{\Gamma(4-\mu)}+\sum_{i=1}^{m}|\alpha_{i}|\psi_{a}(\eta_{i})
+\sum_{i=1}^{m}|\beta_{i}|\psi^{\prime}(\eta_{i})\Bigg)\\
&&+\big[\psi(t_{2})-\psi(t_{1})\big] \Bigg],\\
\end{eqnarray*}
which is independent of $u$ and tends to zero as $t_{2} - t_{1} \rightarrow 0$. Thus, $\mathcal{A}_{1}$ is equicontinuous. Hence, according to the
Arzela-Ascoli theorem, $\mathcal{A}_{1}(\mathcal{B}_{r_{0}})$ is a relatively compact set.

\textbf{Step 3.} $\mathcal{A}_{2}$ is continuous and $k$-contractive.\\

Let $u_{n}$ be a sequence converging to $u$. Then, by the assumption $(\rm{\mathcal{H}_{3}})$, for each $t \in J$, we have
\begin{eqnarray*}
\|\mathcal{A}_{2}u_{n}-\mathcal{A}_{2}u\|&\leq& N\frac{\psi_{a}^{\mu-1}(T)}{|\Delta|\Gamma(\mu)} \|u_{n}-u\|.
\end{eqnarray*}
In view of $(\rm{\mathcal{H}_{6}})$, this implies that $\mathcal{A}_{2}$ is continuous.\\
Next, since $k=\frac{\psi_{a}^{\mu-1}(T)}{|\Delta|\Gamma(\mu)}N\leq\Xi<1$, then $\mathcal{A}_{2}$ is $k$-contractive.\\
 \textbf{Step 4.} $\mathcal{A}$ is condensing.\\
Since $\mathcal{A}_{2}$ is a continuous, $k$-contraction and $\mathcal{A}_{1}$ is compact, by Lemma \ref{lem1.3}, $\mathcal{A}:\mathcal{B}_{r_{0}}\rightarrow \mathcal{B}_{r_{0}}$
with $\mathcal{A}=\mathcal{A}_{1}+\mathcal{A}_{2}$ is a condensing map on $\mathcal{B}_{r_{0}}$. Thus, we conclude by Theorem\ref{thm1.2} that $\mathcal{A}$ has a fixed point, which is a solution of $\psi$-Hilfer fractional boundary value problem \eqref{eq09}-\eqref{eq010}.
\end{proof}

\begin{theorem}\label{thmm3}
Assume that $(\rm{\mathcal{H}_{1}})$, $(\rm{\mathcal{H}_{3}})$, $(\rm{\mathcal{H}_{4}})$, and $(\rm{\mathcal{H}_{6}})$  hold. In addition, we assume that
\begin{itemize}
\item[($\mathcal{H}_{8}$)] There exist a function $p \in C(J, \mathbb{R_{+}})$ such that
    \[\big|f(t,x_{1},x_{2},x_{3})\big|\leq p(t),\ \text{for all}\ (t,x_{1},x_{2},x_{3})\in J\times \mathbb{R}\times\mathbb{R}\times\mathbb{R}.\]
      \end{itemize}
Then the $\psi$-Hilfer fractional boundary value problem \eqref{eq09}-\eqref{eq010} has at least one solution on $J$.
\end{theorem}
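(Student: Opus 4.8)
The plan is to apply the Krasnoselskii-Schaefer fixed point theorem of Burton and Kirk (Theorem~\ref{thm1.3}) on $X=\mathcal{C}(J,\mathbb{R})$. For this I would regroup the terms of the operator $\mathcal{A}$ in \eqref{eqn7}: let $\mathcal{B}_{1}$ collect the Lipschitz-linear part, namely the entire $\lambda$-bracket of $\mathcal{A}$ together with the term $\frac{\psi_{a}^{\mu-1}(t)}{\Delta\Gamma(\mu)}g(u(\xi))$, and let $\mathcal{B}_{2}$ collect the remaining terms built from $F_{u}$,
\[(\mathcal{B}_{2}u)(t)=I^{\nu,\psi}_{a^{+}}F_{u}(t)+\frac{\psi_{a}^{\mu-1}(t)}{\Delta\Gamma(\mu)}\Bigl[\sum_{i=1}^{m}\alpha_{i}I^{\nu,\psi}_{a^{+}}F_{u}(\eta_{i})+\sum_{i=1}^{m}\beta_{i}\psi(\eta_{i})I^{\nu-1,\psi}_{a^{+}}F_{u}(\eta_{i})-I^{2-\mu+\nu,\psi}_{a^{+}}F_{u}(T)\Bigr],\]
so that $\mathcal{A}=\mathcal{B}_{1}+\mathcal{B}_{2}$ maps $X$ into $X$ and, by Lemma~\ref{lem1}, its fixed points are precisely the solutions of \eqref{eq09}-\eqref{eq010}.

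Next I would verify the two structural hypotheses of Theorem~\ref{thm1.3}. First, $\mathcal{B}_{1}$ is a contraction: running the same estimates as in the proof of Theorem~\ref{thmm1} on the $\lambda$-bracket, and $(\mathcal{H}_{3})$ on the $g$-term, one gets $\|\mathcal{B}_{1}u-\mathcal{B}_{1}v\|\le\bigl(|\lambda|\Phi+N\tfrac{\psi_{a}^{\mu-1}(T)}{|\Delta|\Gamma(\mu)}\bigr)\|u-v\|=\Xi\,\|u-v\|$, and $\Xi<1$ by $(\mathcal{H}_{6})$. Second, $\mathcal{B}_{2}$ is completely continuous: continuity of $u\mapsto F_{u}$ (from continuity of $f$ in $(\mathcal{H}_{1})$, of $\mathcal{V}$ in $(\mathcal{H}_{4})$, and of $u\mapsto I^{2-\mu,\psi}_{a^{+}}u$) together with the boundedness of the fractional-integral kernels yields continuity of $\mathcal{B}_{2}$; the bound $|F_{u}(t)|\le p(t)\le p^{\star}:=\max_{t\in J}p(t)$ from $(\mathcal{H}_{8})$ gives $\|\mathcal{B}_{2}u\|\le p^{\star}\Theta$ for every $u$, so $\mathcal{B}_{2}(X)$ is bounded; and equicontinuity of $\mathcal{B}_{2}$ on bounded sets follows exactly as in Step~2 of the proof of Theorem~\ref{thmm2} (the $\lambda$- and $g$-pieces are no longer present, the rest is unchanged), whence the Arzela-Ascoli theorem applies.

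It then remains to rule out alternative~(b), i.e.\ to show $\Upsilon=\{u\in X:\gamma\mathcal{B}_{1}(u/\gamma)+\gamma\mathcal{B}_{2}(u)=u\ \text{for some}\ \gamma\in(0,1)\}$ is bounded. For $u\in\Upsilon$, the $\lambda$-bracket of $\mathcal{B}_{1}$ is positively homogeneous of degree one, so the factors $\gamma$ and $1/\gamma$ cancel there and its contribution is bounded by $|\lambda|\Phi\|u\|$ (the estimate already obtained in Theorem~\ref{thmm1}); the $g$-term, using $(\mathcal{H}_{3})$ with $g(a)=0$, contributes at most $N\tfrac{\psi_{a}^{\mu-1}(T)}{|\Delta|\Gamma(\mu)}(\|u\|+\gamma a)$; and $\|\gamma\mathcal{B}_{2}(u)\|\le\gamma p^{\star}\Theta\le p^{\star}\Theta$. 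Adding these and using $\gamma a\le a$ gives $\|u\|\le\Xi\|u\|+N\tfrac{\psi_{a}^{\mu-1}(T)}{|\Delta|\Gamma(\mu)}a+p^{\star}\Theta$, hence
\[\|u\|\le\frac{N\tfrac{\psi_{a}^{\mu-1}(T)}{|\Delta|\Gamma(\mu)}\,a+p^{\star}\Theta}{1-\Xi}<\infty .\]
Therefore $\Upsilon$ is bounded, alternative~(b) cannot occur, and Theorem~\ref{thm1.3} provides a fixed point $u=\mathcal{B}_{1}u+\mathcal{B}_{2}u=\mathcal{A}u$, which by Lemma~\ref{lem1} solves \eqref{eq09}-\eqref{eq010}.

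The step I expect to be the main obstacle is the a priori bound on $\Upsilon$: one must keep careful track of how the rescaling by $\gamma$ interacts with the only non-homogeneous term, the $g$-term, and use $\gamma a\le a$ for $\gamma\in(0,1)$ to produce a bound independent of $\gamma$; establishing equicontinuity of $\mathcal{B}_{2}$ up to the left endpoint $a$, where $\psi_{a}^{\nu}(t)$ and $\psi_{a}^{\mu-1}(t)$ carry fractional powers, is the other delicate point, but it is already implicit in Step~2 of the proof of Theorem~\ref{thmm2}.
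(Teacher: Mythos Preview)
Your argument is correct and reaches the same final a~priori bound, but you use a different splitting of $\mathcal{A}$ than the paper. The paper keeps the original decomposition \eqref{eqn8}--\eqref{eqn9}: it takes only the $g$-term $\mathcal{A}_{2}$ as the contraction (with constant $k=N\tfrac{\psi_{a}^{\mu-1}(T)}{|\Delta|\Gamma(\mu)}<1$) and shows that the larger operator $\mathcal{A}_{1}$, which still contains the $\lambda$-bracket, is completely continuous by verifying continuity, boundedness and equicontinuity on bounded balls. You instead move the $\lambda$-bracket into the contraction $\mathcal{B}_{1}$, exploiting its linearity in $u$ to get the sharper Lipschitz constant $\Xi$, and keep only the $F_{u}$-terms in the completely continuous part $\mathcal{B}_{2}$. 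Your choice makes the compactness step cleaner, since $(\mathcal{H}_{8})$ gives $\|\mathcal{B}_{2}u\|\le p^{\star}\Theta$ uniformly in $u$, so $\mathcal{B}_{2}(X)$ is globally bounded and equicontinuity is automatic from the $p^{\star}$-estimates; the paper, having left the $\lambda$-bracket in $\mathcal{A}_{1}$, must restrict to bounded balls to get the same conclusion. Conversely, the paper's contraction is simpler and requires only $N\tfrac{\psi_{a}^{\mu-1}(T)}{|\Delta|\Gamma(\mu)}<1$ at that step (though the full $\Xi<1$ is still needed for the a~priori bound). In Step~5 both routes meet: the homogeneity observation you make for the $\lambda$-bracket is exactly what the paper uses implicitly when bounding $\gamma\mathcal{A}_{1}(u)$, and both arrive at $\|u\|\le\bigl(\Theta\|p\|+aN\tfrac{\psi_{a}^{\mu-1}(T)}{|\Delta|\Gamma(\mu)}\bigr)/(1-\Xi)$.
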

\begin{proof}
We first consider the operator $\mathcal{A}:\mathcal{C}(J,\mathbb{R})\rightarrow \mathcal{C}(J,\mathbb{R})$ with $\mathcal{A}=\mathcal{A}_{1}+\mathcal{A}_{2}$, where $\mathcal{A}_{1}$, $\mathcal{A}_{2}$ are defined by \eqref{eqn8} and \eqref{eqn9} respectively. Observe that Problem \eqref{eq09}-\eqref{eq010} has solutions if the operator $\mathcal{A}$ has fixed points.\\
Next, we will show that the operators $\mathcal{A}_{1}$ and $\mathcal{A}_{2}$ satisfy all the conditions of Theorem \ref{thmm3}. The proof will be split into numerous steps.\\
\textbf{Step 1.} The operator $\mathcal{A}_{1}$ is continuous.\\
Let $u_{n}$ be a sequence such that $u_{n}\rightarrow u$ in $\mathcal{C}(J,\mathbb{R})$. Then for each $t\in J$, we have
\begin{eqnarray*}
 \big|(\mathcal{A}_{1}u_{n})(t)-(\mathcal{A}_{1}u)(t)| &\leq& I_{a^{+}}^{\nu,\psi}|F_{u_{n}}(t)-F_{u}(t)|+\frac{\psi_{a}^{\mu-1}(t)}{|\Delta|\Gamma(\mu)}\Bigg [\sum_{i=1}^{m}|\alpha_{i}|I^{\nu,\psi}_{a^{+}}|F_{u_{n}}(\eta_{i})-F_{u}(\eta_{i})|\\
  &&+\sum_{i=1}^{m}|\beta_{i}|\psi(\eta_{i})
I^{\nu-1,\psi}_{a^{+}}|F_{u_{n}}(\eta_{i})-F_{u}(\eta_{i})|+I^{2-\mu+\nu,\psi}_{a^{+}}|F_{u_{n}}(T)-F_{u}(T)| \Bigg]\nonumber\\
&&+|\lambda| \Bigg[\frac{\psi_{a}^{\mu-1}(t)}{\Delta\Gamma(\mu)}\Bigg(I^{3-\mu,\psi}_{a^{+}}|u_{n}(T)-u(T)|+
\sum_{i=1}^{m}|\alpha_{i}|I^{1,\psi}_{a^{+}}|u_{n}(\eta_{i})-u(\eta_{i})|\\
&&+\sum_{i=1}^{m}|\beta_{i}|\psi^{\prime}(\eta_{i})
|u_{n}(\eta_{i})-u(\eta_{i})|\bigg)+ I^{1,\psi}_{a^{+}}|u_{n}(t)-u(t)|\Bigg]\\
&\leq& \|F_{u_{n}}-F_{u}\|\Bigg(\frac{\psi_{a}^{\nu}(T)}{\Gamma(\nu+1)}+\frac{\psi_{a}^{\mu-1}(T)}{|\Delta|\Gamma(\mu)
}\Bigg [\sum_{i=1}^{m}|\alpha_{i}|\frac{\psi_{a}^{\nu}(\eta_{i})}{\Gamma(\nu+1)}
+\sum_{i=1}^{m}|\beta_{i}|\psi(\eta_{i})\frac{\psi_{a}^{\nu-1}(\eta_{i})}{\Gamma(\nu)}\\
&&+\frac{\psi_{a}^{2-\mu+\nu}(T)}{\Gamma(3-\mu+\nu)} \Bigg]\Bigg)
+|\lambda| \Bigg[\frac{\psi_{a}^{\mu-1}(T)}{|\Delta|\Gamma(\mu)}\Bigg(
\frac{\psi_{a}^{3-\mu}(T)}{\Gamma(4-\mu)}+\sum_{i=1}^{m}|\alpha_{i}|\psi_{a}(\eta_{i})+
\sum_{i=1}^{m}|\beta_{i}|\psi^{\prime}(\eta_{i})\Bigg)\\
&&+\psi_{a}(T) \Bigg]\|u_{n}-u\|.\\
\end{eqnarray*}
Since $f$ is continuous, this implies that the operator $F_{u}$ is also continuous. Hence, we obtain
$$\|F_{u_{n}}-F_{u}\|\rightarrow 0\ \text{as}\ n\rightarrow \infty.$$
Thus, $\mathcal{A}_{1}$ is continuous.

\textbf{Step 2.} The operator $\mathcal{A}_{1}$ maps bounded sets into bounded sets in $\mathcal{C}(J,\mathbb{R})$.\\
To see this, let $u\in \mathcal{B}_{r}=\big\{u\in\mathcal{C}(J,\mathbb{R}):\|u\|\leq r\big\}$ ($r>0$), and $\|p\|=\sup_{t\in J}|p(t)|$. Then we have, for all $t\in J$

\begin{eqnarray*}
 \big|(\mathcal{A}_{1}u)(t)\big|&\leq& I_{a^{+}}^{\nu,\psi}|F_{u}(t)|+\frac{\psi_{a}^{\mu-1}(t)}{|\Delta|\Gamma(\mu)}\Bigg [\sum_{i=1}^{m}|\alpha_{i}|I^{\nu,\psi}_{a^{+}}|F_{u}(\eta_{i})|+\sum_{i=1}^{m}|\beta_{i}|\psi(\eta_{i})
I^{\nu-1,\psi}_{a^{+}}|F_{u}(\eta_{i})|+I^{2-\mu+\nu,\psi}_{a^{+}}|F_{u}(T)| \Bigg]\nonumber\\
&&+|\lambda| \Bigg[\frac{\psi_{a}^{\mu-1}(t)}{|\Delta|\Gamma(\mu)}\Bigg(I^{3-\mu,\psi}_{a^{+}}|u(T)|+
\sum_{i=1}^{m}|\alpha_{i}|I^{1,\psi}_{a^{+}}|u(\eta_{i})|+\sum_{i=1}^{m}|\beta_{i}|\psi^{\prime}(\eta_{i})
|u(\eta_{i})|\Bigg)+ I^{1,\psi}_{a^{+}}|u(t)|\Bigg]\\
&\leq& \|p\|\Bigg\{\frac{\psi_{a}^{\nu}(T)}{\Gamma(\nu+1)}+\frac{\psi_{a}^{\mu-1}(T)}{|\Delta|\Gamma(\mu)
}\Bigg [\sum_{i=1}^{m}|\alpha_{i}|\frac{\psi_{a}^{\nu}(\eta_{i})}{\Gamma(\nu+1)}\\
&&+\sum_{i=1}^{m}|\beta_{i}|\psi(\eta_{i})\frac{\psi_{a}^{\nu-1}(\eta_{i})}{\Gamma(\nu)}
+\frac{\psi_{a}^{2-\mu+\nu}(T)}{\Gamma(3-\mu+\nu)} \Bigg]\Bigg\}
+|\lambda|  r\Bigg[\frac{\psi_{a}^{\mu-1}(T)}{|\Delta|\Gamma(\mu)}\Bigg(
\frac{\psi_{a}^{3-\mu}(T)}{\Gamma(4-\mu)}+\sum_{i=1}^{m}|\alpha_{i}|\psi_{a}(\eta_{i})\\
&&+\sum_{i=1}^{m}|\beta_{i}|\psi^{\prime}(\eta_{i})\Bigg)
+\psi_{a}(T) \Bigg]:=L,
  \end{eqnarray*}
and consequently
$$\|\mathcal{A}_{1}u\|\leq L.$$

\textbf{Step 3.} The operator $\mathcal{A}_{1}$ maps bounded sets into equicontinuous sets in $\mathcal{C}(J,\mathbb{R})$.\\
Let $t_{1}, t_{2}\in J$ with $t_{1}< t_{2}$ and $u\in \mathcal{B}_{r}$. Then we have

\begin{eqnarray*}
\big|(\mathcal{A}_{1}u)(t_{2})|-|(\mathcal{A}_{1}u)(t_{1})\big|
  &\leq&  \Big|I_{a^{+}}^{\nu,\psi}F_{u}(t_{2})-I_{a^{+}}^{\nu,\psi}F_{u}(t_{1})\Big|+\frac{\psi_{a}^{\mu-1}(t_{2})-\psi_{a}^{\mu-1}(t_{1})}{|\Delta|\Gamma(\mu)}\Bigg [\sum_{i=1}^{m}|\alpha_{i}|I^{\nu,\psi}_{a^{+}}|F_{u}(\eta_{i})|\\
  &&+\sum_{i=1}^{m}|\beta_{i}|\psi(\eta_{i})
I^{\nu-1,\psi}_{a^{+}}|F_{u}(\eta_{i})|+I^{2-\mu+\nu,\psi}_{a^{+}}|F_{u}(T)| \Bigg]\nonumber\\
&&+|\lambda| \Bigg[\frac{\psi_{a}^{\mu-1}(t_{2})-\psi_{a}^{\mu-1}(t_{1})}{|\Delta|\Gamma(\mu)}\Bigg(I^{3-\mu,\psi}_{a^{+}}|u(T)|+
\sum_{i=1}^{m}|\alpha_{i}|I^{1,\psi}_{a^{+}}|u(\eta_{i})|\\
&&+\sum_{i=1}^{m}|\beta_{i}|\psi^{\prime}(\eta_{i})
|u(\eta_{i})|\Bigg)+ \big|I^{1,\psi}_{a^{+}}|u(t_{2})-I^{1,\psi}_{a^{+}}|u(t_{1})|\Bigg]\\
&\leq& \|p\|\Bigg\{\frac{\psi_{a}^{\nu}(t_{2})-\psi_{a}^{\nu}(t_{1})}{\Gamma(\nu+1)}+\frac{\psi_{a}^{\mu-1}(t_{2})-\psi_{a}^{\mu-1}(t_{1})}{|\Delta|\Gamma(\mu)
}\\
&&\times\Bigg [\sum_{i=1}^{m}|\alpha_{i}|\frac{\psi_{a}^{\nu}(\eta_{i})}{\Gamma(\nu+1)}
+\sum_{i=1}^{m}|\beta_{i}|\psi(\eta_{i})\frac{\psi_{a}^{\nu-1}(\eta_{i})}{\Gamma(\nu)}
+\frac{\psi_{a}^{2-\mu+\nu}(T)}{\Gamma(3-\mu+\nu)} \Bigg]\Bigg\}\\
&&+|\lambda|  r\Bigg[\frac{\psi_{a}^{\mu-1}(t_{2})-\psi_{a}^{\mu-1}(t_{1})}{|\Delta|\Gamma(\mu)}\Bigg(
\frac{\psi_{a}^{3-\mu}(T)}{\Gamma(4-\mu)}+\sum_{i=1}^{m}|\alpha_{i}|\psi_{a}(\eta_{i})
+\sum_{i=1}^{m}|\beta_{i}|\psi^{\prime}(\eta_{i})\Bigg)\\
&&+\big[\psi(t_{2})-\psi(t_{1})\big] \Bigg],\\
\end{eqnarray*}
The right hand side of the inequality above is independent of $u$ and tends to $0$ as $t_{2}\rightarrow t_{1}$.
Therefore, the operator $\mathcal{A}_{1}$ is equicontinuous.\\
\textbf{Step 4.} The operator $\mathcal{A}_{2}$ is a contraction.\\
In view of Step 3 of Theorem \ref{thmm2},  $\mathcal{A}_{2}$ is $k$-contractive, since
$$k=\frac{\psi_{a}^{\mu-1}(T)}{|\Delta|\Gamma(\mu)}N<1.$$
\textbf{Step 5.} A priori bounds.\\
Now it remains to show that the set
$$ \Upsilon=\Big\{u\in \mathcal{C}(J,\mathbb{R}): \gamma A_{2}\Big(\frac{u}{\gamma}\Big)+\gamma A_{1}(u)=u \Big\} $$ is bounded for some $\gamma \in (0,1).$

Let $u\in \Upsilon$, then $u=\gamma A_{2}\big(\frac{u}{\gamma}\big)+\gamma A_{1}(u)$ for some $0 < \gamma <1$. Thus, for each $t \in J$, we have
\begin{eqnarray*}
 |u(t)|&\leq& \gamma I_{a^{+}}^{\nu,\psi}|F_{u}(t)|+\gamma\frac{\psi_{a}^{\mu-1}(t)}{|\Delta|\Gamma(\mu)}\Bigg [\sum_{i=1}^{m}|\alpha_{i}|I^{\nu,\psi}_{a^{+}}|F_{u}(\eta_{i})|+\sum_{i=1}^{m}|\beta_{i}|\psi(\eta_{i})
I^{\nu-1,\psi}_{a^{+}}|F_{u}(\eta_{i})|+I^{2-\mu+\nu,\psi}_{a^{+}}|F_{u}(T)| \Bigg]\nonumber\\
&&+\gamma|\lambda| \Bigg[\frac{\psi_{a}^{\mu-1}(t)}{|\Delta|\Gamma(\mu)}\Bigg(I^{3-\mu,\psi}_{a^{+}}|u(T)|+
\sum_{i=1}^{m}|\alpha_{i}|I^{1,\psi}_{a^{+}}|u(\eta_{i})|+\sum_{i=1}^{m}|\beta_{i}|\psi^{\prime}(\eta_{i})
|u(\eta_{i})|\Bigg)+ I^{1,\psi}_{a^{+}}|u(t)|\Bigg]\\
&&+\gamma\frac{\psi_{a}^{\mu-1}(t)}{|\Delta|\Gamma(\mu)}\Big|g\Big(\frac{u(\xi)}{\gamma}\Big)\Big|\\
&\leq& I_{a^{+}}^{\nu,\psi}|F_{u}(t)|+\frac{\psi_{a}^{\mu-1}(t)}{|\Delta|\Gamma(\mu)}\Bigg [\sum_{i=1}^{m}|\alpha_{i}|I^{\nu,\psi}_{a^{+}}|F_{u}(\eta_{i})|+\sum_{i=1}^{m}|\beta_{i}|\psi(\eta_{i})
I^{\nu-1,\psi}_{a^{+}}|F_{u}(\eta_{i})|+I^{2-\mu+\nu,\psi}_{a^{+}}|F_{u}(T)| \Bigg]\nonumber\\
&&+|\lambda| \Bigg[\frac{\psi_{a}^{\mu-1}(t)}{|\Delta|\Gamma(\mu)}\Bigg(I^{3-\mu,\psi}_{a^{+}}|u(T)|+
\sum_{i=1}^{m}|\alpha_{i}|I^{1,\psi}_{a^{+}}|u(\eta_{i})|+\sum_{i=1}^{m}|\beta_{i}|\psi^{\prime}(\eta_{i})
|u(\eta_{i})|\Bigg)+ I^{1,\psi}_{a^{+}}|u(t)|\Bigg]\\
&&+\gamma\frac{\psi_{a}^{\mu-1}(t)}{|\Delta|\Gamma(\mu)}\Bigg[\Big|g\Big(\frac{u(\xi)}{\gamma}\Big)-g(a)\Big|+|g(a)|\Bigg]\\
&\leq& \|p\|\Bigg\{\frac{\psi_{a}^{\nu}(T)}{\Gamma(\nu+1)}+\frac{\psi_{a}^{\mu-1}(T)}{|\Delta|\Gamma(\mu)
}\Bigg [\sum_{i=1}^{m}|\alpha_{i}|\frac{\psi_{a}^{\nu}(\eta_{i})}{\Gamma(\nu+1)}+\sum_{i=1}^{m}|\beta_{i}|\psi(\eta_{i})\frac{\psi_{a}^{\nu-1}(\eta_{i})}{\Gamma(\nu)}
+\frac{\psi_{a}^{2-\mu+\nu}(T)}{\Gamma(3-\mu+\nu)} \Bigg]\Bigg\}\\
&&+|\lambda| \|u\|\Bigg[\frac{\psi_{a}^{\mu-1}(T)}{|\Delta|\Gamma(\mu)}\Bigg(
\frac{\psi_{a}^{3-\mu}(T)}{\Gamma(4-\mu)}+\sum_{i=1}^{m}|\alpha_{i}|\psi_{a}(\eta_{i})+\sum_{i=1}^{m}|\beta_{i}|\psi^{\prime}(\eta_{i})\Bigg)
+\psi_{a}(T) \Bigg]\\
&&+N\frac{\psi_{a}^{\mu-1}(T)}{|\Delta|\Gamma(\mu)}\|u\|+aN\frac{\psi_{a}^{\mu-1}(T)}{|\Delta|\Gamma(\mu)}.
\end{eqnarray*}
From the above we get
$$\|u\|\leq M:=\frac{\Theta\|p\|+aN\frac{\psi_{a}^{\mu-1}(T)}{|\Delta|\Gamma(\mu)}}{1-\Xi},$$
which shows that the set $\Upsilon$ is bounded, since $\Xi<1$. Thus the operators $\mathcal{A}_{1}$ and $\mathcal{A}_{2}$ satisfy all the conditions of Theorem \ref{thm1.3}. Hence, the operator $\mathcal{A}$ has a fixed point in $J$, which is the solution of the problem \eqref{eq09}-\eqref{eq010}.
\end{proof}

 \begin{theorem}\label{thmm4}
Let $f:J\times\mathbb{R}^{3}\rightarrow \mathbb{R}$ be a continuous function and hypotheses $(\rm{\mathcal{H}_{2}})$-$(\rm{\mathcal{H}_{4}})$ be satisfied. Then the $\psi$-Hilfer fractional boundary value problem \eqref{eq09}-\eqref{eq010} is Ulam-Hyers stable and consequently generalized Ulam-Hyres stable provided that
\[ \Omega+\frac{\psi_{a}^{\mu-1}(T)}{|\Delta|\Gamma(\mu)}N<1.\]
\end{theorem}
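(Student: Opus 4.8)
The plan is to recycle the fixed-point machinery already built for Theorem~\ref{thmm1}, combined with Remark~\ref{rem 0.2.1}. Fix $\epsilon>0$ and let $u\in\mathcal{C}(J,\mathbb{R})$ satisfy the inequality \eqref{eq.UH} (together with the nonlocal conditions \eqref{eq010}, as is implicit in Definition~\ref{def1.6}). By Remark~\ref{rem 0.2.1} there exists $z\in C(J,\mathbb{R})$ with $|z(t)|\le\epsilon$ on $J$ such that $\big(^{H}D^{\nu,\beta;\psi}_{a^{+}}+\lambda\,^{H}D^{\nu-1,\beta;\psi}_{a^{+}}\big)u(t)=F_{u}(t)+z(t)$ for $t\in J$, where $F_{u}(t)=f\big(t,u(t),(\mathcal{V}u)(t),I_{a^{+}}^{2-\mu,\psi}u(t)\big)$. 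Applying Lemma~\ref{lem1} with $h(t)=F_{u}(t)+z(t)$ gives the representation $u(t)=(\mathcal{A}u)(t)+R_{z}(t)$, where $\mathcal{A}$ is the operator \eqref{eqn7} and $R_{z}$ is the part of \eqref{eq3} that is linear in the perturbation, namely
\[
R_{z}(t)=\int_{a}^{t}\mathcal{K}_{t}^{\nu}(s)z(s)\,ds+\frac{\psi_{a}^{\mu-1}(t)}{\Delta\Gamma(\mu)}\Bigg[\sum_{i=1}^{m}\alpha_{i}\int_{a}^{\eta_{i}}\mathcal{K}_{\eta_{i}}^{\nu}(s)z(s)\,ds+\sum_{i=1}^{m}\beta_{i}\psi(\eta_{i})\int_{a}^{\eta_{i}}\mathcal{K}_{\eta_{i}}^{\nu-1}(s)z(s)\,ds-\int_{a}^{T}\mathcal{K}_{T}^{2-\mu+\nu}(s)z(s)\,ds\Bigg].
\]

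Next I would estimate $\|R_{z}\|$. Using $|z(t)|\le\epsilon$ and the elementary evaluations $I_{a^{+}}^{\alpha,\psi}1=\psi_{a}^{\alpha}(t)/\Gamma(\alpha+1)$ (exactly the computations already carried out in the proof of Theorem~\ref{thmm1}), one obtains $|R_{z}(t)|\le\epsilon\,\Theta$ for all $t\in J$, i.e. $\|R_{z}\|\le\Theta\,\epsilon$, with $\Theta$ the constant defined before Theorem~\ref{thmm1}.

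Now let $x$ be the unique solution of \eqref{eq09}-\eqref{eq010}, i.e.\ the unique fixed point of $\mathcal{A}$; its existence follows from Theorem~\ref{thmm1}, whose Banach-contraction argument uses only $(\mathcal{H}_{2})$-$(\mathcal{H}_{4})$ and \eqref{eq6} and not $(\mathcal{H}_{1})$. Then $x=\mathcal{A}x$, and invoking the Lipschitz estimate $\|\mathcal{A}u-\mathcal{A}v\|\le\big(\Omega+\tfrac{\psi_{a}^{\mu-1}(T)}{|\Delta|\Gamma(\mu)}N\big)\|u-v\|$ proved in Theorem~\ref{thmm1},
\[
\|u-x\|\le\|R_{z}\|+\|\mathcal{A}u-\mathcal{A}x\|\le\Theta\,\epsilon+\Big(\Omega+\frac{\psi_{a}^{\mu-1}(T)}{|\Delta|\Gamma(\mu)}N\Big)\|u-x\|.
\]
Since $\Omega+\tfrac{\psi_{a}^{\mu-1}(T)}{|\Delta|\Gamma(\mu)}N<1$, solving for $\|u-x\|$ yields $\|u-x\|\le c_{f}\epsilon$ with $c_{f}=\Theta\big/\big(1-\Omega-\tfrac{\psi_{a}^{\mu-1}(T)}{|\Delta|\Gamma(\mu)}N\big)>0$, which is Ulam--Hyers stability. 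Taking $\varphi_{f}(\epsilon)=c_{f}\epsilon$, which lies in $C(\mathbb{R}_{+},\mathbb{R}_{+})$ with $\varphi_{f}(0)=0$, then gives the generalized Ulam--Hyers stability.

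The only genuinely delicate point is the bookkeeping of the first two steps: correctly identifying, inside the integral representation \eqref{eq3}, exactly which terms carry the perturbation $z$ (the kernel integrals against $h$, but \emph{not} the $g(u(\xi))$ term nor the $\lambda$-terms in $u$, since those are absorbed into $\mathcal{A}u$), and then bounding $\|R_{z}\|$ uniformly by $\Theta\epsilon$. One should also note, when applying Lemma~\ref{lem1} to the perturbed equation, that $u$ has the regularity ($u\in\mathcal{C}^{2}(J,\mathbb{R})$) required there, which is supplied by the smoothing of the $\psi$-Riemann--Liouville integrals in the reduction within Lemma~\ref{lem1}. Everything after the bound on $R_{z}$ is a verbatim reuse of the contraction constant of $\mathcal{A}$ and is routine.
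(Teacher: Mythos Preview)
Your proposal is correct and follows essentially the same approach as the paper: both use Remark~\ref{rem 0.2.1} to introduce the perturbation $z$, apply Lemma~\ref{lem1} to obtain $u=(\mathcal{A}u)+R_{z}$, bound $\|R_{z}\|\le\Theta\epsilon$, and then invoke the contraction estimate for $\mathcal{A}$ to solve for $\|u-x\|$, arriving at the identical constant $c_{f}=\Theta/\bigl(1-\Omega-\tfrac{\psi_{a}^{\mu-1}(T)}{|\Delta|\Gamma(\mu)}N\bigr)$. The only cosmetic difference is that the paper rewrites the Lipschitz bound explicitly (via the auxiliary $\chi_{x}$ notation) rather than citing it from Theorem~\ref{thmm1} as you do.
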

\begin{proof}
Let $\epsilon>0$ and $u \in C(J, \mathbb{R})$ be the solution of inequality \eqref{eq.UH}, and let $x\in C(J, \mathbb{R})$ be the unique solution of the following problem
\begin{equation*}
       \begin{cases} \big( ^{H}D^{\nu,\beta;\psi}_{a^{+}}+\lambda ^{H}D^{\nu-1,\beta,\psi}_{a^{+}}\big)x(t)= f(t,x(t),(\mathcal{V}x)(t), I_{a^{+}}^{2-\mu,\psi}x(t)),\ t\in J, \\
    x(a)=0,\  I^{2-\mu,\psi}_{a^{+}}x(T) =\sum_{i=1}^{m}\alpha_{i}x(\eta_{i})+\sum_{i=1}^{m}\beta_{i}x^{\prime}(\eta_{i})+g(x(\xi)).
       \end{cases}
       \end{equation*}
Since $u \in C(J, \mathbb{R})$ be the solution of inequality \eqref{eq.UH}, we have by (ii) of Remark \ref{rem 0.2.1}
     \[\big(^{H}D^{\nu,\beta;\psi}_{a^{+}}+\lambda ^{H}D^{\nu-1,\beta,\psi}_{a^{+}}\big)u(t)= f(t,u(t),(\mathcal{V}u)(t), I_{a^{+}}^{2-\mu,\psi}u(t))+z(t),\ t\in J,\]
     and
\begin{eqnarray}
u(t)&=&\int_{a}^{t}\mathcal{K}_{t}^{\nu}(s)F_{u}(s)ds+\frac{\psi_{a}^{\mu-1}(t)}{\Delta\Gamma(\mu)}
\Bigg[\sum_{i=1}^{m}\alpha_{i}\bigg(
\int_{a}^{\eta_{i}}\mathcal{K}_{\eta_{i}}^{\nu}(s)F_{u}(s)ds\bigg)\nonumber\\
&&+\sum_{i=1}^{m}\beta_{i}\psi(\eta_{i})\bigg(
\int_{a}^{\eta_{i}}\mathcal{K}_{\eta_{i}}^{\nu-1}(s)F_{u}(s)ds\bigg)-\int_{a}^{T}\mathcal{K}_{T}^{2-\mu+\nu}(s)
F_{u}(s)ds+g(u(\xi))\Bigg]\nonumber\\
&&+\lambda \Bigg[\frac{\psi_{a}^{\mu-1}(t)}{\Delta\Gamma(\mu)}\Bigg(\int_{a}^{T}\mathcal{K}_{T}^{3-\nu}(s)u(s)ds      -\bigg(\sum_{i=1}^{m}\alpha_{i}\int_{a}^{\eta_{i}}\psi^{\prime}(s)u(s)ds \nonumber\\
&&+\sum_{i=1}^{m}\beta_{i}\psi^{\prime}(\eta_{i})
u(\eta_{i})\bigg)\Bigg)-\int_{a}^{t}\psi^{\prime}(s)u(s)ds \Bigg]\nonumber\\
&&+\int_{a}^{t}\mathcal{K}_{t}^{\nu}(s)z(s)ds+\frac{\psi_{a}^{\mu-1}(t)}{\Delta\Gamma(\mu)}
\Bigg[\sum_{i=1}^{m}\alpha_{i}\bigg(
\int_{a}^{\eta_{i}}\mathcal{K}_{\eta_{i}}^{\nu}(s)z(s)ds\bigg)\nonumber\\
&&+\sum_{i=1}^{m}\beta_{i}\psi(\eta_{i})\bigg(
\int_{a}^{\eta_{i}}\mathcal{K}_{\eta_{i}}^{\nu-1}(s)z(s)ds\bigg)-\int_{a}^{T}\mathcal{K}_{T}^{2-\mu+\nu}(s)
z(s)ds\Bigg]\label{eq.UH1}.
\end{eqnarray}
From Lemma \ref{lem1}, we obtain $x(t) =\int_{a}^{t}\mathcal{K}_{t}^{\nu}(s)F_{x}(s)ds+\chi_{x}(t)$, where
\begin{eqnarray}
\chi_{x}(t)&=&\frac{\psi_{a}^{\mu-1}(t)}{\Delta\Gamma(\mu)}
\Bigg[\sum_{i=1}^{m}\alpha_{i}\bigg(
\int_{a}^{\eta_{i}}\mathcal{K}_{\eta_{i}}^{\nu}(s)F_{x}(s)ds\bigg)\nonumber\\
&&+\sum_{i=1}^{m}\beta_{i}\psi(\eta_{i})\bigg(
\int_{a}^{\eta_{i}}\mathcal{K}_{\eta_{i}}^{\nu-1}(s)F_{x}(s)ds\bigg)-\int_{a}^{T}\mathcal{K}_{T}^{2-\mu+\nu}(s)
F_{x}(s)ds+g(x(\xi))\Bigg]\nonumber\\
&&+\lambda \Bigg[\frac{\psi_{a}^{\mu-1}(t)}{\Delta\Gamma(\mu)}\Bigg(\int_{a}^{T}\mathcal{K}_{T}^{3-\nu}(s)x(s)ds      -\bigg(\sum_{i=1}^{m}\alpha_{i}\int_{a}^{\eta_{i}}\psi^{\prime}(s)x(s)ds \nonumber\\
&&+\sum_{i=1}^{m}\beta_{i}\psi^{\prime}(\eta_{i})
x(\eta_{i})\bigg)\Bigg)-\int_{a}^{t}\psi^{\prime}(s)x(s)ds \Bigg]\nonumber
\end{eqnarray}
By using (i) of Remark \ref{rem 0.2.1} and \eqref{eq.UH1}, we obtain the following estimation
\begin{eqnarray}
\Big|u(t)-\chi_{u}(t)- \int_{a}^{t}\mathcal{K}_{t}^{\nu}(s)F_{u}(s)ds\Big|&=&\Bigg| \int_{a}^{t}\mathcal{K}_{t}^{\nu}(s)z(s)ds+ \frac{\psi_{a}^{\mu-1}(t)}{\Delta\Gamma(\mu)}
\Bigg[\sum_{i=1}^{m}\alpha_{i}\bigg(
\int_{a}^{\eta_{i}}\mathcal{K}_{\eta_{i}}^{\nu}(s)z(s)ds\bigg)\nonumber\\
&&+\sum_{i=1}^{m}\beta_{i}\psi(\eta_{i})\bigg(
\int_{a}^{\eta_{i}}\mathcal{K}_{\eta_{i}}^{\nu-1}(s)z(s)ds\bigg)-\int_{a}^{T}\mathcal{K}_{T}^{2-\mu+\nu}(s)
z(s)ds\Bigg] \Bigg|\nonumber\\
&\leq&\epsilon \Bigg (\frac{\psi_{a}^{\nu}(T)}{\Gamma(\nu+1)}+\frac{\psi_{a}^{\mu-1}(T)}{|\Delta|\Gamma(\mu)
}\Bigg [\sum_{i=1}^{m}|\alpha_{i}|\frac{\psi_{a}^{\nu}(\eta_{i})}{\Gamma(\nu+1)}
+\sum_{i=1}^{m}|\beta_{i}|\psi(\eta_{i})\frac{\psi_{a}^{\nu-1}(\eta_{i})}{\Gamma(\nu)}\nonumber\\
&&+\frac{\psi_{a}^{2-\mu+\nu}(T)}{\Gamma(3-\mu+\nu)} \Bigg] \Bigg)\nonumber\\
&\leq& \epsilon \Theta \label{eq.UH2}.
\end{eqnarray}
From \eqref{eq.UH2}, for $t\in J$, we have
\begin{eqnarray*}
|u(t)-x(t)|&=&\Big|u(t)-\chi_{x}(t)- \int_{a}^{t}\mathcal{K}_{t}^{\nu}(s)F_{x}(s)ds\Big|\\
&\leq&\Big|u(t)-\chi_{u}(t)- \int_{a}^{t}\mathcal{K}_{t}^{\nu}(s)F_{u}(s)ds\Big|+\int_{a}^{t}\mathcal{K}_{t}^{\nu}(s)|F_{u}(s)-F_{x}(s)|ds+\big|\chi_{u}(t)-\chi_{x}(t) \big|\\
&\leq& \epsilon \Theta+ \frac{\psi_{a}^{\nu}(T)}{\Gamma(\nu+1)}\|u-x\|\Lambda(T,\mu)+\Lambda(T,\mu)\frac{\psi_{a}^{\mu-1}(T)}{|\Delta|\Gamma(\mu)}\|u-x\|\\
&&\times \Bigg [\sum_{i=1}^{m}|\alpha_{i}|\frac{\psi_{a}^{\nu}(\eta_{i})}{\Gamma(\nu+1)}+\sum_{i=1}^{m}|\beta_{i}|\psi(\eta_{i})\frac{\psi_{a}^{\nu-1}(\eta_{i})}{\Gamma(\nu)}
+\frac{\psi_{a}^{2-\mu+\nu}(T)}{\Gamma(3-\mu+\nu)} \Bigg]+N\|u-x\|\frac{\psi_{a}^{\mu-1}(T)}{|\Delta|\Gamma(\mu)}\\
&&+|\lambda|\|u-x\|\Bigg[\frac{\psi_{a}^{\mu-1}(T)}{|\Delta|\Gamma(\mu)}\Bigg(
\frac{\psi_{a}^{3-\mu}(T)}{\Gamma(4-\mu)}+\sum_{i=1}^{m}|\alpha_{i}|\psi_{a}(\eta_{i})+\sum_{i=1}^{m}|\beta_{i}|\psi^{\prime}(\eta_{i})\Bigg)
+\psi_{a}(T) \Bigg]\\
&\leq& \epsilon \Theta+ \frac{\psi_{a}^{\nu}(T)}{\Gamma(\nu+1)}\|u-x\|\Lambda(T,\mu)+\Lambda(T,\mu)\|u-x\|
\bigg(\Theta-\frac{\psi_{a}^{\nu}(T)}{\Gamma(\nu+1)}\bigg)\\
&&+ N\|u-x\|\frac{\psi_{a}^{\mu-1}(T)}{|\Delta|\Gamma(\mu)}+|\lambda|\|u-x\|\Phi \\
&\leq&\epsilon \Theta+\bigg(\Omega+\frac{\psi_{a}^{\mu-1}(T)}{|\Delta|\Gamma(\mu)}N \bigg)\|u-x\|.
\end{eqnarray*}
In consequence, it follows that
\[\|u-x\|\leq \frac{\epsilon \Theta}{1-\Big(\Omega+\frac{\psi_{a}^{\mu-1}(T)}{|\Delta|\Gamma(\mu)}N \Big)}.\]
If we set $c_{f}=\frac{\Theta}{1-\Big(\Omega+\frac{\psi_{a}^{\mu-1}(T)}{|\Delta|\Gamma(\mu)}N \Big)}$, then the $\psi$-Hilfer fractional boundary value problem \eqref{eq09}-\eqref{eq010} is Ulam-Hyers stable. Furthermore, by choosing $\varphi_{f}(\epsilon)=\frac{\epsilon \Theta}{1-\Big(\Omega+\frac{\psi_{a}^{\mu-1}(T)}{|\Delta|\Gamma(\mu)}N \Big)}$ with $\varphi_{f}(0)=0$, the problem \eqref{eq09}-\eqref{eq010} is also generalized Ulam-Hyers stable.
\end{proof}

\section{Illustrative examples}
\begin{exmp}
Consider the following $\psi$-Hilfer fractional integro-differential equation with nonlocal boundary conditions
\begin{equation}\label{eq4.1}
       \begin{cases}\Big( ^{H}D^{\frac{3}{2},\frac{1}{2};1-e^{-t\sqrt{2}}}_{0^{+}}+ \frac{1}{100} ^{H}D^{\frac{1}{2},\frac{1}{2},1-e^{-t\sqrt{2}}}_{0^{+}}\Big)u(t)= f\Big(t,u(t),(\mathcal{V}u)(t), I_{0^{+}}^{\frac{1}{4},1-e^{-t\sqrt{2}}}u(t)\Big) ,\  t \in J,\\
    u(0)=0,\  I^{\frac{1}{4},1-e^{-t\sqrt{2}}}_{0^{+}}u\big(\frac{7}{6}\big) =u\big(\frac{1}{6}\big)+2u\big(\frac{5}{6}\big)+\frac{1}{3}u^{\prime}\big(\frac{1}{6}\big)
     +\frac{2}{5}u^{\prime}\big(\frac{5}{6}\big)+g(u(1)),
       \end{cases}
       \end{equation}
where $\nu=\frac{3}{2}$, $\beta=\frac{1}{2}$, $\psi(t)=1-e^{-t\sqrt{2}}$, $a=0$, $T=\frac{7}{6}$, $\lambda=\frac{1}{100}$, $m=2$, $\mu=\frac{7}{4}$, $\alpha_{1}=1$, $\alpha_{2}=2$, $\beta_{1}=\frac{1}{3}$, $\beta_{2}=\frac{2}{5}$, $\xi=1$, $\eta_{1}=\frac{1}{6}$, $\eta_{2}=\frac{5}{6}$, and $g(t)=\frac{1}{10}\ln(1+t^{2}\sqrt{3})$.\\

\item[(i)] Consider the nonlinear function
\begin{eqnarray*}
f\Big(t,u(t),(\mathcal{V}u)(t), I_{0^{+}}^{\frac{1}{4},\psi}u(t)\Big)&=&\frac{\cos t}{1+t}+\frac{e^{-\sin^{2}t}}{\sqrt{25+t}} \frac{|u(t)|}{2+|u(t)|}\\
&&+\frac{(2t-1)^{2}}{6}\Bigg(\cos\Big[\frac{\pi}{2}(\mathcal{V}u)(t)\Big]
+\frac{\big|I_{0^{+}}^{\frac{1}{4},\psi}u(t)\big|}{3+\big|I_{0^{+}}^{\frac{1}{4},\psi}u(t)\big|} \Bigg),
\end{eqnarray*}
with $(\mathcal{V}u)(t)=u(\tau(t))$, $\tau(t)=\frac{2t}{1+t}$.\\
For all $u_{1}, u_{2}, v_{1}, v_{2}, w_{1}, w_{2}$, and $t\in J$, we have
 \begin{eqnarray*}
 |f(t,u_{1},v_{1},w_{1})-f(t,u_{2},v_{2},w_{2})|&\leq& \frac{e^{-\sin^{2}t}}{\sqrt{25+t}} \Bigg |\frac{|u_{1}|}{2+|u_{1}|}-\frac{|u_{2}|}{2+|u_{2}|}\Bigg|+\frac{(2t-1)^{2}}{6}\\
 &&\times \Bigg[\bigg|\cos\Big(\frac{\pi}{2}v_{1}\Big)-\cos\Big(\frac{\pi}{2}v_{2}\Big)\bigg|
+\bigg|\frac{\big|w_{1}\big|}{3+\big|w_{1}\big|}-\frac{\big|w_{2}\big|}{3+\big|w_{2}\big|}\bigg| \Bigg]\\
&\leq& 2 \frac{e^{-\sin^{2}t}}{\sqrt{25+t}} \frac{\big||u_{1}|-|u_{2}|\big|}{\big(2+|u_{1}|\big)\big(2+|u_{2}|\big)}+\frac{(2t-1)^{2}}{6}\\
&&\times \Bigg[\frac{\pi}{2}|v_{1}-v_{2}|+\frac{3\big  ||w_{1}|-|w_{2}|\big|}{\big(3+|w_{1}|\big)\big(3+|w_{2}|\big)} \Bigg]\\
&\leq& \frac{1}{2}\frac{e^{-\sin^{2}t}}{\sqrt{25+t}} |u_{1}-u_{2}|+\frac{\pi(2t-1)^{2}}{12}|v_{1}-v_{2}|+\frac{(2t-1)^{2}}{18}|w_{1}-w_{2}|\\
&\leq& \frac{1}{10}|u_{1}-u_{2}|+\frac{4\pi}{27}|v_{1}-v_{2}|+\frac{8}{81}|w_{1}-w_{2}|,
  \end{eqnarray*}
  and
  \[|g(u_{1})-g(u_{2})|\leq \frac{\sqrt[4]{3}}{10}|u_{1}-u_{2}|.\]
So, the assumptions $(\rm{\mathcal{H}_{1}})$-$(\rm{\mathcal{H}_{4}})$ are satisfied with $l^{\star}_{1}=\frac{1}{10}$, $l^{\star}_{2}=\frac{4\pi}{27}$, $l^{\star}_{3}=\frac{8}{81}$, and $N=\frac{\sqrt[4]{3}}{10}$.
Using the given values, we have $\Lambda\Big(\frac{7}{6},\frac{7}{4}\Big)\thickapprox0.668728$,\ $\Delta\thickapprox -1.94081$,\ $\Theta\thickapprox1.34089$,\ $\Phi\thickapprox 1.73186$,\ $\Omega\thickapprox0.9140092$.\\
Hence
 \begin{equation*}
 \Omega+\frac{\psi_{a}^{\mu-1}(T)}{|\Delta|\Gamma(\mu)}N\thickapprox0.976884<1.
  \end{equation*}

Since all the assumptions of Theorem \ref{thmm1} are fulfilled, the $\psi$-Hilfer fractional boundary value problem \eqref{eq4.1} has a unique solution on $J$. Furthermore, we have
$$c_{f}=\frac{ \Theta}{1-\Big(\Omega+\frac{\psi_{a}^{\mu-1}(T)}{|\Delta|\Gamma(\mu)}N \Big)}\approx 58.007>0.$$
Hence, by Theorem \ref{thmm4}, the $\psi$-Hilfer fractional boundary value problem \eqref{eq09}-\eqref{eq010} is both Ulam–Hyers stable and also generalized Ulam–Hyers stable.\\
\item[(ii)] Consider the nonlinear function
 \begin{equation*}
f(t,x_{1},x_{2},x_{3})=\frac{e^{-\sqrt{t}}}{1+t}ax_{1}+\frac{b(t-t^{^2})}{2+t}\bigg(3+\frac{|x_{2}|}{1+|x_{2}|} \bigg)+\frac{ce^{\sin\big(\frac{3\pi t}{7} \big)}}{\sqrt{16+t^{2}}}\bigg(2x_{3}^{3}+\frac{1}{4} \bigg)
 \end{equation*}
For $x_{i}\in\mathbb{R}, i = 1,2,3$, and $t\in J$, we can estimate
 \begin{equation*}
\big| f(t,x_{1},x_{2},x_{3})\big|\leq ae^{-\sqrt{t}}|x_{1}|+\frac{b(t-t^{^2})}{2}\bigg(3+\frac{|x_{2}|}{1+|x_{2}|} \bigg)+\frac{ce^{\sin\big(\frac{3\pi t}{7} \big)}}{4}\bigg(2x_{3}^{3}+\frac{1}{4} \bigg).
\end{equation*}
This means that condition $(\rm{\mathcal{H}_{5}})$ is valid with $p_{1}(t)=ae^{-\sqrt{t}}$, $p_{2}(t)=\frac{b(t-t^{^2})}{2}$, $p_{3}(t)=\frac{ce^{\sin\big(\frac{3\pi t}{7} \big)}}{4}$, $p_{1}^{\star}=a$, $p_{2}^{\star}=\frac{b}{8}$, $p_{3}^{\star}=\frac{c}{4}e$, and $\varphi_{1}(t)=t$, $\varphi_{2}(t)=3+\frac{t}{1+t}$, $\varphi_{3}(t)=2t^{3}+\frac{1}{4}$.\\
Further, we have that $(\rm{\mathcal{H}_{6}})$ holds, since
\[\Xi:=|\lambda|\Phi +N\frac{\psi_{a}^{\mu-1}(T)}{|\Delta|\Gamma(\mu)}\thickapprox0.0801944<1.\]
Putting
\begin{eqnarray*}
\zeta(r)&=&\frac{\Big(\sum_{i=1}^{2}p_{i}^{\star}\varphi_{i}(r)+p_{3}^{\star}\varphi_{3}\Big(\frac{\psi_{a}^{2-\mu}(T)}{\Gamma(3-\mu)}r\Big) \Big)\Theta+\frac{\psi_{a}^{\mu-1}(T)}{|\Delta|\Gamma(\mu)}Na}{1-\Xi}-r\\
&=&\frac{\Big(ar+\frac{b}{8}\big(3+\frac{r}{1+r}\big)+\frac{c}{4}e\Big[\frac{1}{4}
+2\Big(\frac{\big(1-e^{-\frac{7}{6}\sqrt{2}}\big)^{\frac{1}{4}}r}{\Gamma(\frac{5}{4})}\Big)^{3}\Big] \Big)\Theta}{1-\Xi}-r.
\end{eqnarray*}
Selecting $a=\frac{37}{500}$, $b=\frac{29}{125}$, $c=\frac{3}{10}$. By using the graphical representation of the function $\zeta(r)$ depicted in Figure 1, we obtain
\[ \zeta(r)<0,\ \forall r\in \bigg[\frac{1}{4}, \frac{39}{40} \bigg].\]
Then, $(\rm{\mathcal{H}_{7}})$ is satisfied. Thus, all the conditions of Theorem \ref{thmm2} are
satisfied, and consequently the $\psi$-Hilfer fractional boundary value problem \eqref{eq4.1} has a solution on $J$.\\
\begin{figure}[h!]
\begin{center}
\includegraphics[totalheight=2.5in]{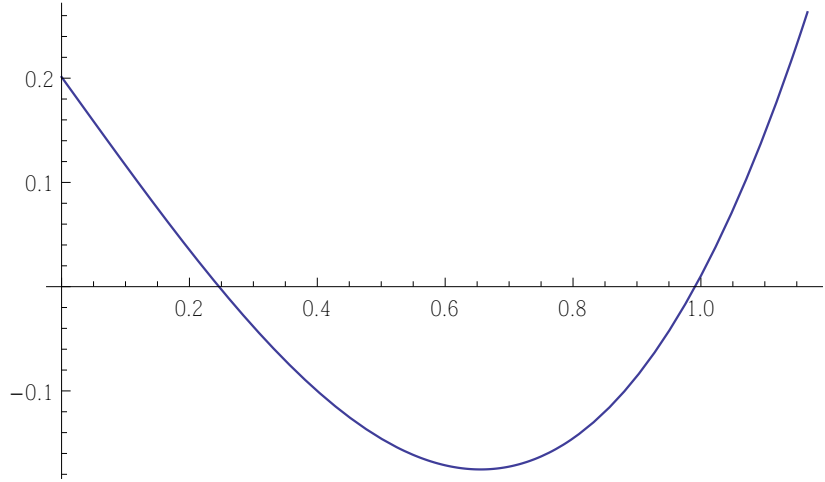}
 \caption{Graph of the function $\zeta(r)$,\ $ r\in \big[0,\frac{7}{6}\big].$}
\end{center}
\end{figure}

\item[(iii)] Consider the function $f\Big(t,u(t),(\mathcal{V}u)(t), I_{0^{+}}^{\frac{1}{4},\psi}u(t)\Big)=1$, $g(u(\xi))=1$, and $p(t)=e^{t}$. Then, all the assumptions of Theorem \ref{thmm3} are satisfied and by Lemma \ref{lem1} the solution of the $\psi$-Hilfer fractional boundary value problem \eqref{eq4.1} is given by
\begin{eqnarray*}\label{eq3}
u(t) &=&\frac{[\psi(t)]^{\nu}}{\Gamma(\nu+1)}+\frac{[\psi(t)]^{\mu-1}}{\Delta\Gamma(\mu)}
\Bigg[\sum_{i=1}^{m}\alpha_{i}
\frac{[\psi(\eta_{i})]^{\nu}}{\Gamma(\nu+1)}+\sum_{i=1}^{m}\beta_{i}
\frac{[\psi(\eta_{i})]^{\nu}}{\Gamma(\nu)}
-\frac{[\psi(T)]^{2-\mu+\nu}}{\Gamma(3-\mu+\nu)}+1\Bigg]\nonumber\\
&&+\lambda \Bigg[\frac{[\psi(t)]^{\mu-1}}{\Delta\Gamma(\mu)}\Bigg(\int_{0}^{T}\frac{\psi^{\prime}(s)(\psi(T)-\psi(s))^{2-\nu}}{\Gamma(3-\nu)}  u(s)ds-\sum_{i=1}^{m}\alpha_{i}\int_{0}^{\eta_{i}}\psi^{\prime}(s)u(s)ds \nonumber\\
&&-\sum_{i=1}^{m}\beta_{i}\psi^{\prime}(\eta_{i})
u(\eta_{i})\Bigg)-\int_{0}^{t}\psi^{\prime}(s)u(s)ds \Bigg],\ t\in[0,T].
\end{eqnarray*}
\item[(1)] Setting $\psi_{1}(t)=3^{(t^{\rho}+2t)}-1$ with $1<\rho\leq2$, we have that $\psi_{1}$ is a nondecreasing positive function with $\psi_{1}^{\prime}(t)\neq 0$ for all $t\in[0,T]$, and the solution of the $\psi$-Hilfer fractional boundary value problem \eqref{eq4.1} is given by
    \begin{eqnarray*}\label{eq4}
u(t)&=&\frac{[-1+3^{t^{\rho}+2t}]^{\frac{3}{2}}}{\Gamma(\frac{5}{2})}+\frac{[-1+3^{t^{\rho}+2t}]^{\frac{3}{4}}}{\Delta\Gamma\big(\frac{7}{4}\big)}
\Bigg[\frac{1}{\Gamma(\frac{5}{2})}\bigg(\Big[-1+3^{\big(\frac{1}{6}\big)^{\rho}+\frac{1}{3}}\Big]^{\frac{3}{2}}+2\Big[-1+3^{\big(\frac{5}{6}\big)^{\rho}+\frac{5}{3}}\Big]^{\frac{3}{2}} \bigg)\\
&&+\frac{1}{\Gamma(\frac{3}{2})} \bigg(\frac{1}{3}\Big[-1+3^{\big(\frac{1}{6}\big)^{\rho}+\frac{1}{3}}\Big]^{\frac{3}{2}}
+\frac{2}{5}\Big[-1+3^{\big(\frac{5}{6}\big)^{\rho}+\frac{5}{3}}\Big]^{\frac{3}{2}} \bigg)
-\frac{\Big[-1+3^{\big(\frac{7}{6}\big)^{\rho}+\frac{7}{3}}\Big]^{\frac{7}{4}}}{\Gamma(\frac{11}{4})}+1\Bigg]\nonumber\\
&&+\lambda \ln3 \Bigg[\frac{[-1+3^{t^{\rho}+2t}]^{\frac{3}{4}}}{\Delta\Gamma\big(\frac{7}{4}\big)}\Bigg(\Gamma\Big(\frac{3}{2}\Big)\int_{0}^{\frac{7}{6}}(\rho s^{\rho-1}+2) 3^{s^{\rho}+2s}\Big[3^{(\frac{7}{6})^{\rho}+\frac{7}{3}}-3^{s^{\rho}+2s}\Big]^{\frac{1}{2}}u(s)ds\\
&&-\int_{0}^{\frac{1}{6}}(\rho s^{\rho-1}+2) 3^{s^{\rho}+2s}u(s)ds -2\int_{0}^{\frac{5}{6}}(\rho s^{\rho-1}+2) 3^{s^{\rho}+2s}u(s)ds\nonumber\\
&&-\frac{1}{3}\bigg(\rho \Big(\frac{1}{6}\Big)^{\varrho-1}+2\bigg) 3^{(\frac{1}{6})^{\rho}+\frac{1}{3}}u\Big(\frac{1}{6}\Big)-\frac{2}{5}\bigg(\rho \Big(\frac{5}{6}\Big)^{\rho-1}+2\bigg) 3^{(\frac{5}{6})^{\rho}+\frac{5}{3}}u\Big(\frac{5}{6}\Big)\Bigg)\\
&&-\int_{0}^{t}(\rho s^{\rho-1}+2) 3^{s^{\rho}+2s}u(s)ds \Bigg],\ \ \ t\in\bigg[0,\frac{7}{6}\bigg].
\end{eqnarray*}
\item[(2)] If $\psi_{2}(t)=\tan\big(\frac{\pi t \sqrt{\rho}}{4}\big)$ with $1<\rho\leq2$, then the solution of the $\psi$-Hilfer fractional boundary value problem \eqref{eq4.1} is given by
 \begin{eqnarray*}\label{eq5}
 u(t)&=&\frac{\big[\tan\big(\frac{\pi t \sqrt{\rho}}{4}\big)\big]^{\frac{3}{2}}}{\Gamma(\frac{5}{2})}+\frac{\big[\tan\big(\frac{\pi t \sqrt{\rho}}{4}\big)\big]^{\frac{3}{4}}}{\Delta\Gamma\big(\frac{7}{4}\big)}
\Bigg[\frac{1}{\Gamma(\frac{5}{2})}\bigg(\Big[\tan\big(\frac{\pi\sqrt{\rho}}{24}\big)\Big]^{\frac{3}{2}}
+2\Big[\tan\big(\frac{5\pi \sqrt{\rho}}{24}\big)\Big]^{\frac{3}{2}} \bigg)\\
&&+\frac{1}{\Gamma(\frac{3}{2})} \bigg(\frac{1}{3}\Big[\tan\big(\frac{\pi\sqrt{\rho}}{24}\big)\Big]^{\frac{3}{2}}
+\frac{2}{5}\Big[\tan\big(\frac{5\pi \sqrt{\rho}}{24}\big) \Big]^{\frac{3}{2}} \bigg)
-\frac{\Big[\tan\big(\frac{7\pi \sqrt{\rho}}{24}\big)\Big]^{\frac{7}{4}}}{\Gamma(\frac{11}{4})}+1\Bigg]\nonumber\\
&&+\frac{\lambda \pi \sqrt{\rho}}{4} \Bigg[\frac{\big[\tan\big(\frac{\pi t \sqrt{\rho}}{4}\big)\big]^{\frac{3}{4}}}{\Delta\Gamma\big(\frac{7}{4}\big)} \Bigg(\Gamma\Big(\frac{3}{2}\Big)\int_{0}^{\frac{7}{6}}\Big[1+\tan^{2}\big(\frac{\pi s \sqrt{\rho}}{4}\big)\Big] \Big[ \tan\big(\frac{7\pi  \sqrt{\rho}}{24}\big)-\tan\big(\frac{\pi s \sqrt{\rho}}{4}\big)\Big]^{\frac{1}{2}}u(s)ds\\
&&-\int_{0}^{\frac{1}{6}} \Big[1+\tan^{2}\big(\frac{\pi s \sqrt{\rho}}{4}\big)\Big]u(s)ds -2\int_{0}^{\frac{5}{6}}\Big[1+\tan^{2}\big(\frac{\pi s \sqrt{\rho}}{4}\big)\Big]u(s)ds\nonumber\\
&&-\frac{1}{3}\Big[1+\tan^{2}\big(\frac{\pi  \sqrt{\rho}}{24}\big)\Big] u\Big(\frac{1}{6}\Big)-\frac{2}{5} \Big[1+\tan^{2}\big(\frac{5\pi  \sqrt{\rho}}{24}\big)\Big] u\Big(\frac{5}{6}\Big)\Bigg)\\
&&-\int_{0}^{t}  \Big[1+\tan^{2}\big(\frac{\pi s \sqrt{\rho}}{4}\big)\Big]u(s)ds \Bigg],\ \ \ t\in\bigg[0,\frac{7}{6}\bigg].
 \end{eqnarray*}
\end{exmp}
The graph of the solution of the $\psi$-Hilfer fractional boundary value problem \eqref{eq4.1} for different values of $\rho=\frac{11}{10},\frac{13}{10},\frac{15}{10},\frac{17}{10},\frac{19}{10},\frac{20}{10}$, and $\lambda=0, \lambda=10^{-2}$ involving a variety of functions $\psi_{1}(t)=3^{(t^{\rho}+2t)}-1, \psi_{2}(t)=\tan\big(\frac{\pi t \sqrt{\rho}}{4}\big)$ is depicted in Figures 2-....  .

\end{document}